\newtheorem{defn0}{Definition}[section]
\newtheorem{prop0}[defn0]{Proposition}
\newtheorem{conj0}[defn0]{Conjecture}
\newtheorem{thm0}[defn0]{Theorem}
\newtheorem{lem0}[defn0]{Lemma}
\newtheorem{corollary0}[defn0]{Corollary}
\newtheorem{example0}[defn0]{Example}
\newtheorem{qstn0}[defn0]{Question}
\newcommand\res[1]{{\lower1pt\hbox{$|$}}_{\raise.5pt\hbox{${\scriptstyle #1}$}}}
\newenvironment{defn}{\begin{defn0}}{\end{defn0}}
\newenvironment{prop}{\begin{prop0}}{\end{prop0}}
\newenvironment{conj}{\begin{conj0}}{\end{conj0}}
\newenvironment{thm}{\begin{thm0}}{\end{thm0}}
\newenvironment{lem}{\begin{lem0}}{\end{lem0}}
\newenvironment{cor}{\begin{corollary0}}{\end{corollary0}}
\newenvironment{exm}{\begin{example0}\rm}{\end{example0}}
\begin{document}

\newcommand{\QC}{\mathcal{Q}}
\newcommand{\PC}{\mathcal{P}}
\newcommand{\OC}{\mathcal{O}}
\newcommand{\R}{\mathbb{R}}
\newcommand{\Z}{\mathbb{Z}}
\newcommand{\A}{\mathcal{A}}

\title{Lattice-Supported Splines on Polytopal Complexes}

\author{Michael DiPasquale} \thanks{2010\textit{Mathematics Subject Classification.} Primary 13C05, Secondary 13P25, 13C10.\\
\textit{Key words and phrases:} polyhedral spline, polytopal complex, localization, regularity\\
Author supported by National Science Foundation grant DMS 0838434 ``EMSW21MCTP: Research Experience for Graduate Students.''}
\address{Department of Mathematics, University of Illinois, Urbana, Illinois, 61801}
\email{dipasqu1@illinois.edu}

\begin{abstract}
We study the module $C^r(\PC)$ of piecewise polynomial functions of smoothness
$r$ on a pure $n$-dimensional polytopal complex $\PC\subset\R^n$, via an 
analysis of certain subcomplexes $\PC_W$ obtained from the intersection lattice of the interior codimension one faces of $\PC$. We obtain two main results: first, we show that the vector space $C^r_d(\PC)$ of splines of degree $\leq d$ has a basis consisting of splines supported on the $\PC_W$ for $d\gg0$.  We call such splines \textit{lattice-supported}. This shows that an analog of the notion of a star-supported basis for $C^r_d(\Delta)$ studied by Alfeld-Schumaker in the simplicial case holds \cite{NonEx}. Second, we provide a pair of conjectures, one involving lattice-supported splines, bounding how large $d$ must be so that $\mbox{dim}_\R C^r_d(\PC)$ agrees with the McDonald-Schenck formula \cite{TSchenck08}.  A family of examples shows that the latter conjecture is tight.  The proposed bounds generalize known and conjectured bounds in the simplicial case.
\end{abstract}

\maketitle

\section{Introduction}
Let $\PC$ be a subdivision of a region in $\R^n$ by convex polytopes.  $C^r(\PC)$ denotes the set of piecewise polynomial functions (splines) on $\PC$ that are continuously differentiable of order $r$. Study of the spaces $C^r(\PC)$ is a fundamental topic in approximation theory and numerical analysis (see \cite{Boor}) while within the past decade geometric connections have been made between $C^0(\PC)$ and equivariant cohomology rings of toric varieties \cite{Paynes}.  Practical applications of splines include computer aided design, surface modeling, and computer graphics \cite{Boor}.

A central problem in spline theory is to determine the dimension of (and a basis for) the vector space $C^r_d(\PC)$ of splines whose restriction to each facet of $\PC$ has degree at most $d$. In the bivariate, simplicial case, these questions are studied by Alfeld and Schumaker in \cite{AS4r} and \cite{AS3r} using Bernstein-Bezier methods. A signature result appears in \cite{AS3r}, which gives a dimension formula for $C^r_d(\PC)$ when $d \ge 3r+1$ and $\PC$ is a generic simplicial complex.  An algebraic approach to the dimension question was pioneered by Billera in \cite{Homology} using homological and commutative algebra.  This method has been refined and extended by Schenck, Stillman, and McDonald (\cite{LCoho} and \cite{TSchenck08}). The last of these gives a polyhedral version of the Alfeld-Schumaker formula in the planar case, building on work of Rose \cite{r1}, \cite{r2} on dual graphs.

In applications it is often important to find a basis of $C^r_d(\PC)$ which is ``locally supported'' in some sense.  In the simplicial case the natural thing to require is that the basis elements are supported on stars of vertices. Alfeld and Schumaker \cite{NonEx} call such a basis \textit{minimally supported} or \textit{star-supported}, and they show that for $d=3r+1$ it is not always possible to construct a star-supported basis of $C^r_d(\PC)$.  However in the planar simplicial case the bases constructed for $C^r_d(\Delta)$ in \cite{HongDong} and \cite{SuperSpline} for $d\geq 3r+2$ are in fact star-supported.  Alfeld, Schumaker, and Sirvent show in \cite{LocSup} that in the trivariate case $C^r_d(\Delta)$ has a star-supported basis for $d>8r$.

In this paper, we first show that there are polyhedral analogs of ``star-supported splines''. Our main technical tool is Proposition~\ref{local}, which utilizes polytopal subcomplexes $\PC_W\subset\PC$ associated to certain linear subspaces $W\subset\R^n$ to give a precise description of localization of the module $C^r(\PC)$.  We use this description to tie together local characterizations of projective dimension and freeness due to Yuzvinsky \cite{Yuz} and Billera and Rose \cite{Modules}. A consequence of Proposition~\ref{local} is that there are generators for $C^r(\PC)$ as an $R=\R[x_1,\cdots,x_n]$-module which are supported on the complexes $\PC_W$.  From this follows Theorem~\ref{main}, that for $d \gg 0$, $C^r_d(\PC)$ has an $\R$-basis which is supported on subcomplexes of the form $\PC_W$.  We call such a basis a \textit{lattice}-supported basis, where the lattice (in the sense of a graded poset) of interest is the intersection lattice of the interior codimension one faces of $\PC$.  A lattice-supported basis reduces to a star-supported basis in the simplicial case.

In $\S 5$ we use the \textit{regularity} of a graded module to analyze $\mbox{dim}_\R C^r_d(\PC)$, which is also done in \cite{CohVan}.  We propose in Conjecture~\ref{c1} a regularity bound on the module of locally supported splines in the case $\PC\subset\R^2$.  If true this conjecture gives a bound for when the McDonald-Schenck formula \cite{TSchenck08} for $\mbox{dim}_\R C^r_d(\PC)$ holds which generalizes the Alfeld-Schumaker $3r+1$ bound in the planar simplicial case.  We also propose (Conjecture~\ref{c2}) a stronger regularity bound on $C^r(\widehat{\PC})$ for $\PC\subset\R^2$ and give a family of examples to show that this proposed bound is tight.  Conjecture~\ref{c2} generalizes a conjecture of Schenck that the Alfeld-Schumaker dimension formula \cite{AS3r} holds when $d\ge 2r+1$ in the planar simplicial case \cite{Thesis}.

\subsection{Example of locally supported splines}\label{EX1}
Consider the two dimensional polytopal complex $\QC$ in Figure~\ref{SEVL} with 5 faces, 8 interior edges, and 4 interior vertices.
\begin{minipage}[h]{.4 \textwidth}
\includegraphics[scale=.5]{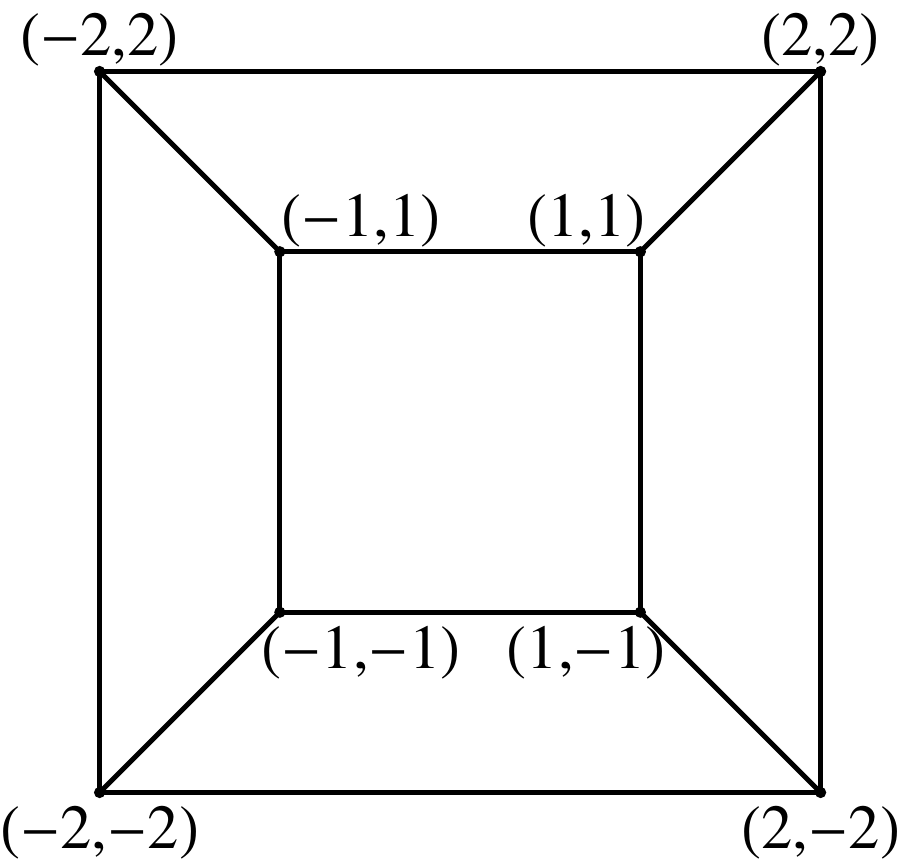}
\captionof{figure}{$\QC$}\label{SEVL}
\end{minipage}
\begin{minipage}[h]{.5\textwidth}
It is readily verifiable that the constant function $\mathbf{1}\in C^0(\QC)$ cannot be written as a sum of splines which are supported on the stars of the $4$ interior vertices, i.e. splines which restrict to $0$ outside of the shaded regions in Figure~\ref{cx}.
\end{minipage}
\begin{figure}[htp]
\centering
\subfloat{\includegraphics[scale=.25]{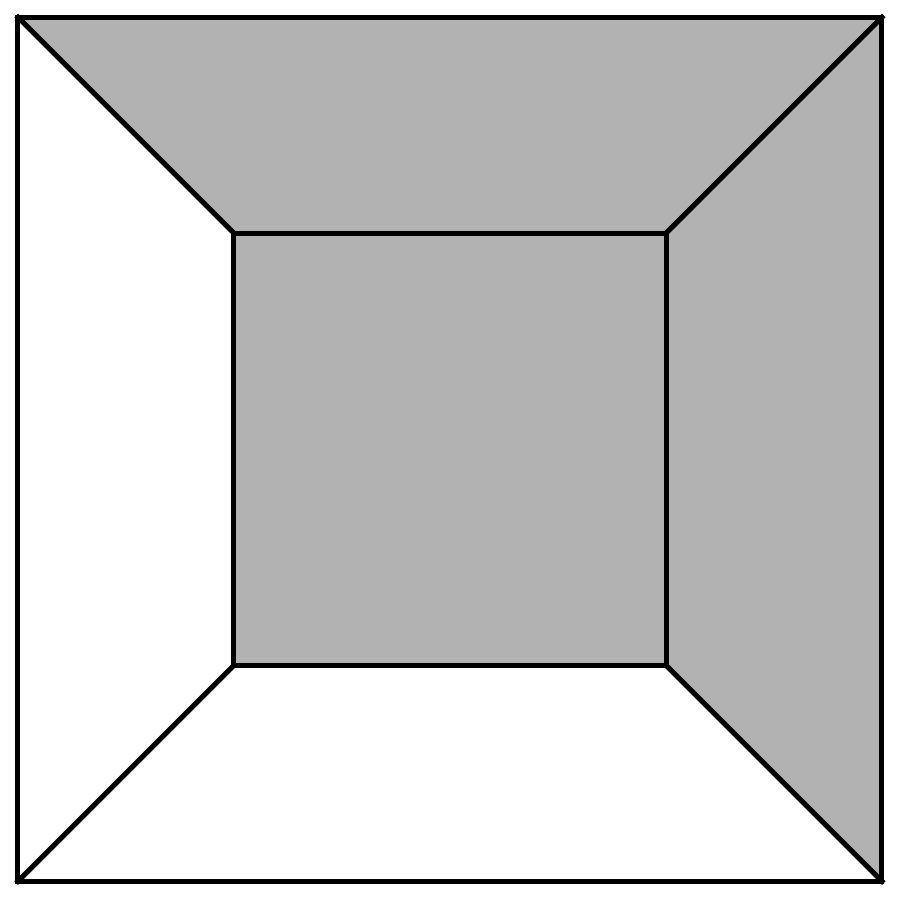}
}
\subfloat{
\includegraphics[scale=.25]{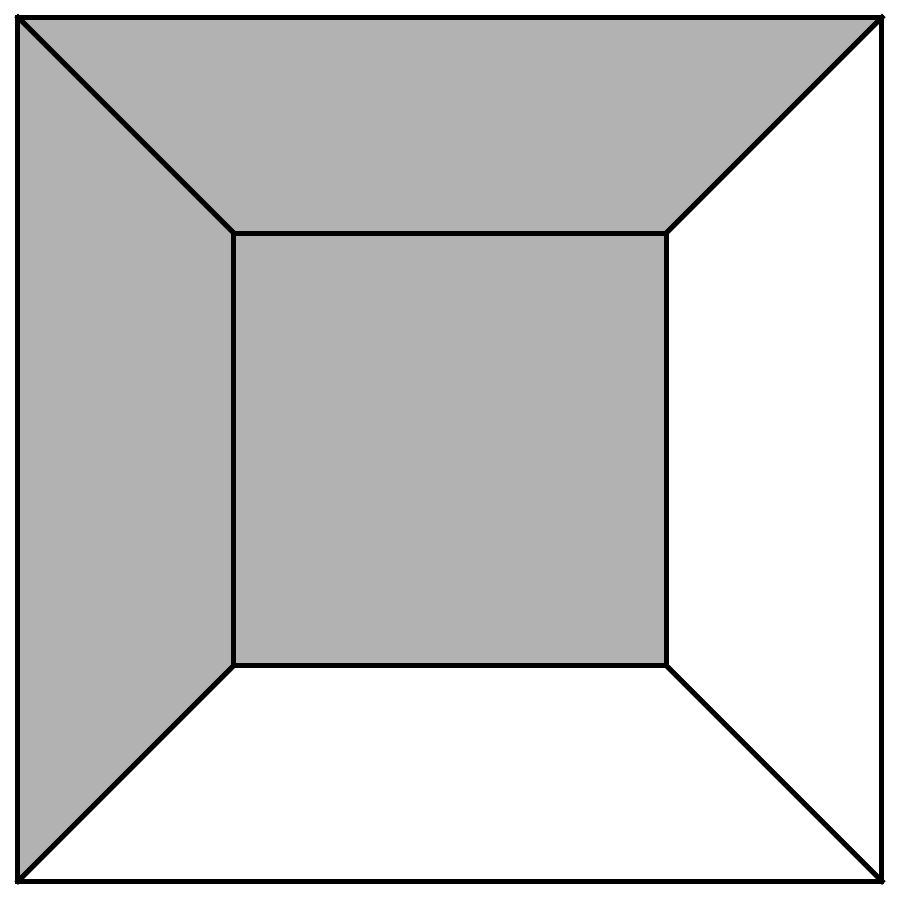}
}
\subfloat{
\includegraphics[scale=.25]{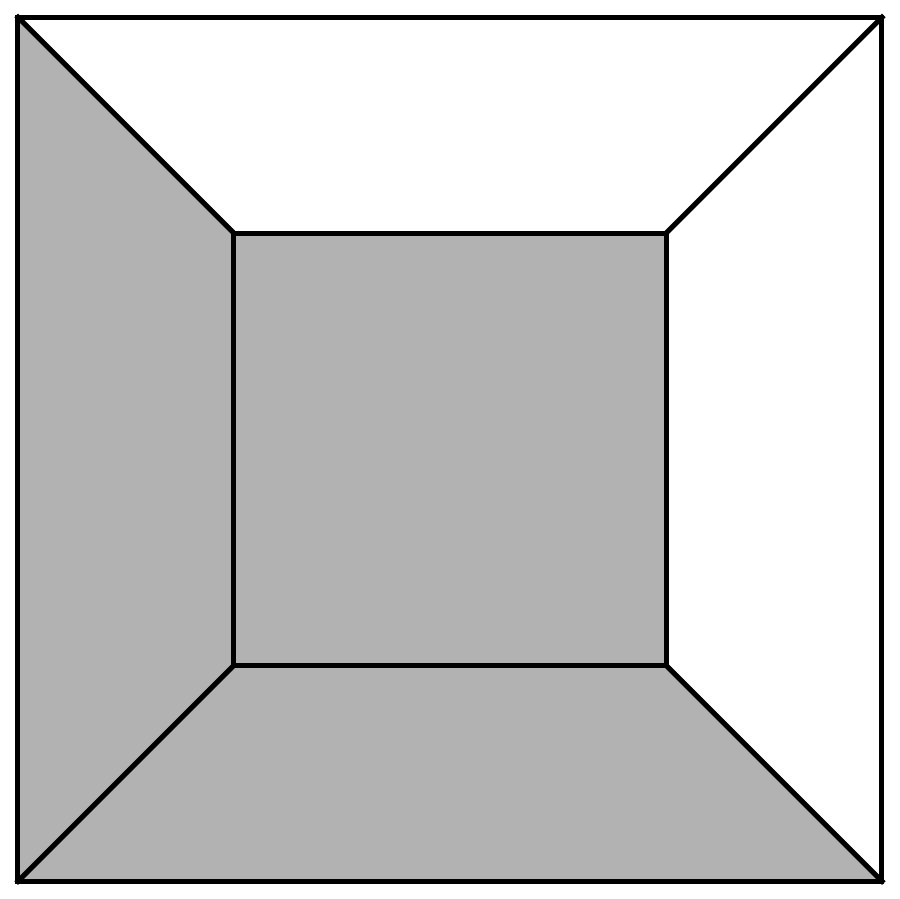}
}
\subfloat{
\includegraphics[scale=.25]{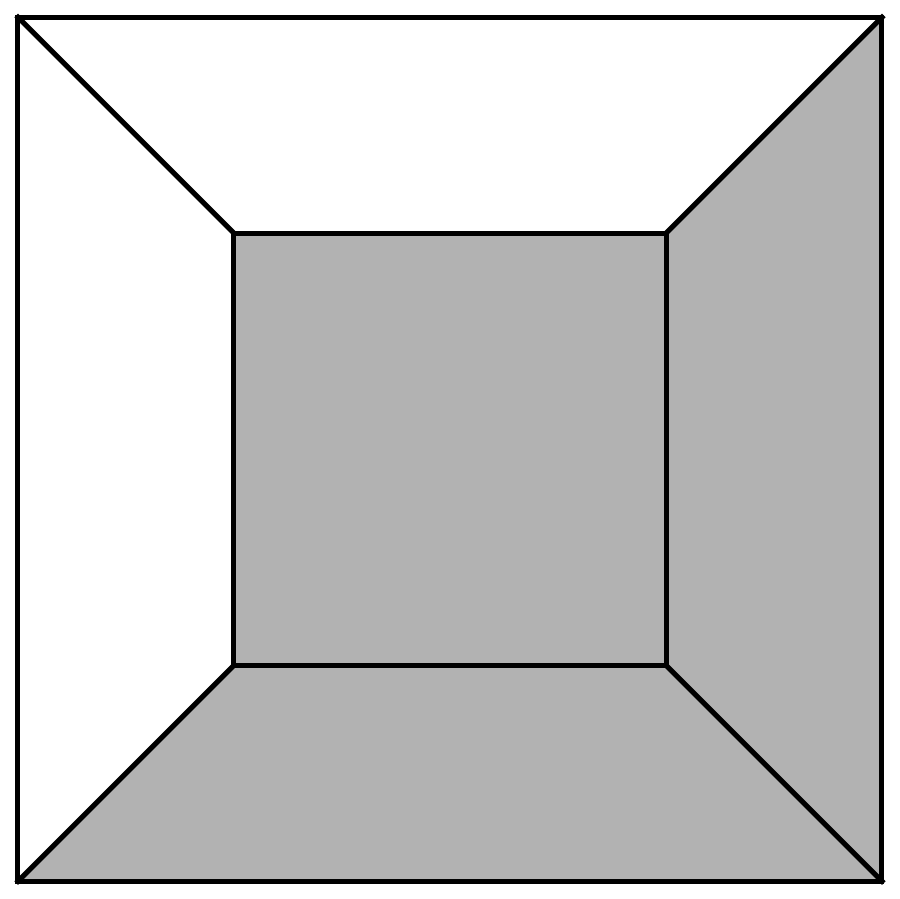}
}
\caption{Stars of Interior Vertices of $\QC$}\label{cx}
\end{figure}

However, Macaulay2 \cite{M2} computes the following decomposition of $\mathbf{1}\in C^0_2(\QC)$.

\begin{figure}[htp]
\begin{flushleft}
\begin{minipage}{.25\textwidth}
\centering
\includegraphics[scale=.3]{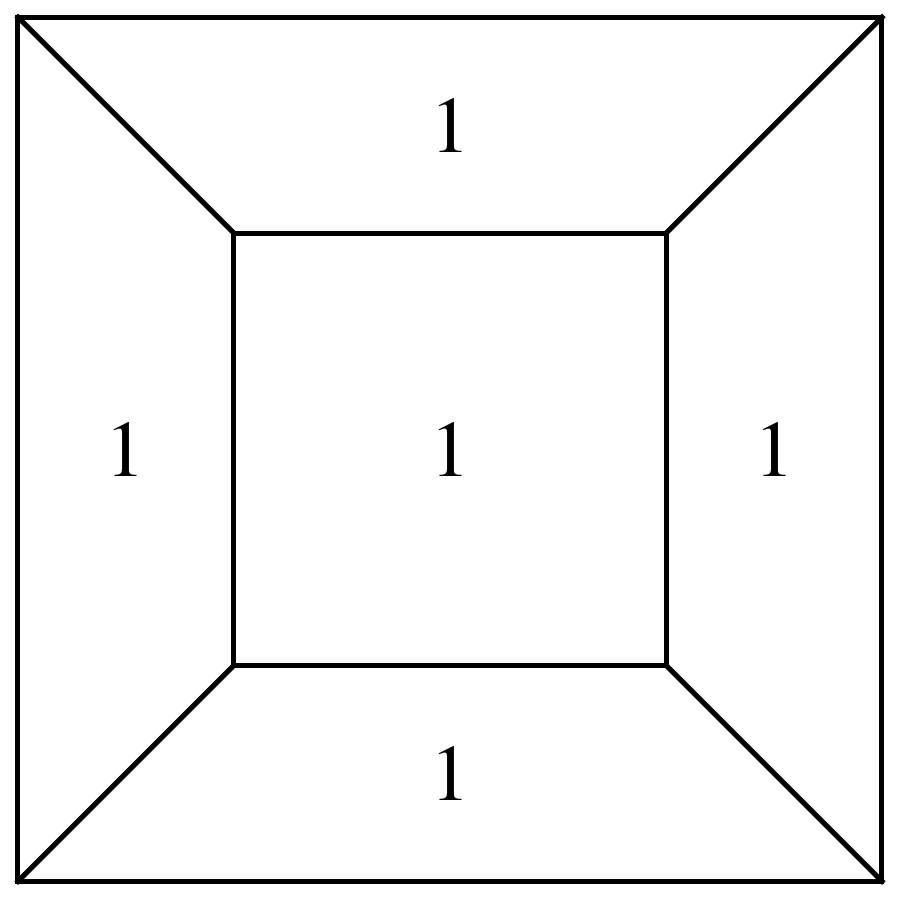}
\end{minipage}
\begin{minipage}{.05\textwidth}
\centering
$=$
\end{minipage}
\begin{minipage}{.25\textwidth}
\centering
\includegraphics[scale=.3]{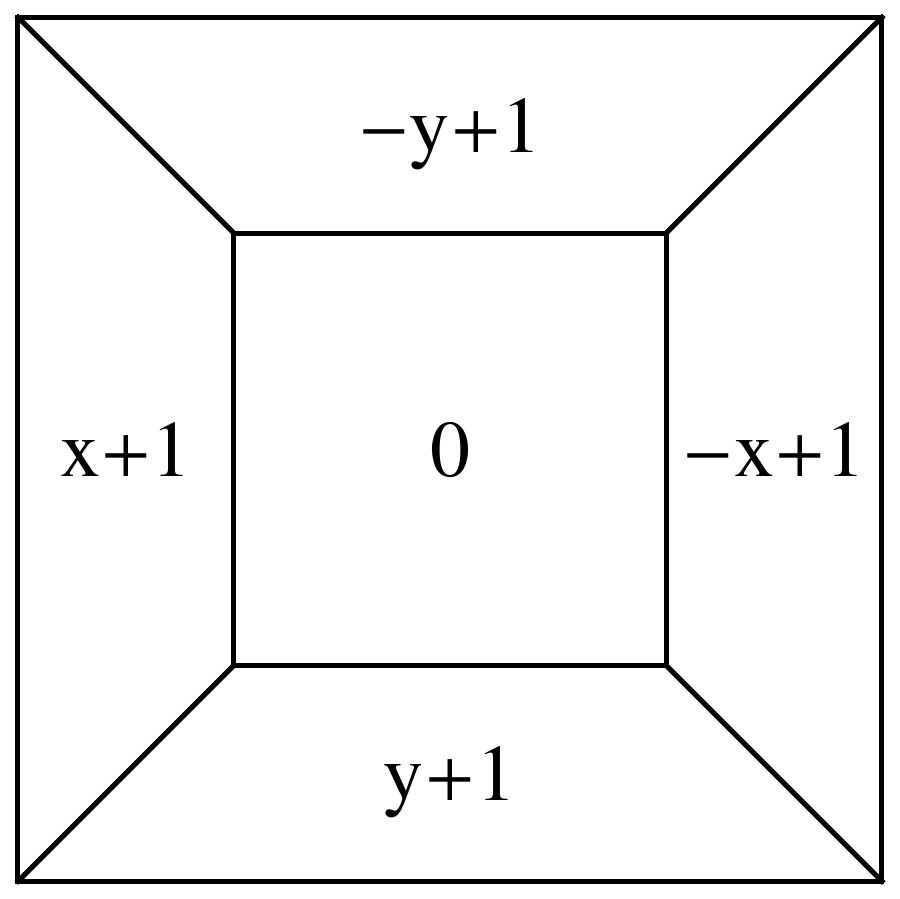}
\end{minipage}
\begin{minipage}{.05\textwidth}
\centering
$+$
\end{minipage}
\begin{minipage}{.25\textwidth}
\centering
\includegraphics[scale=.3]{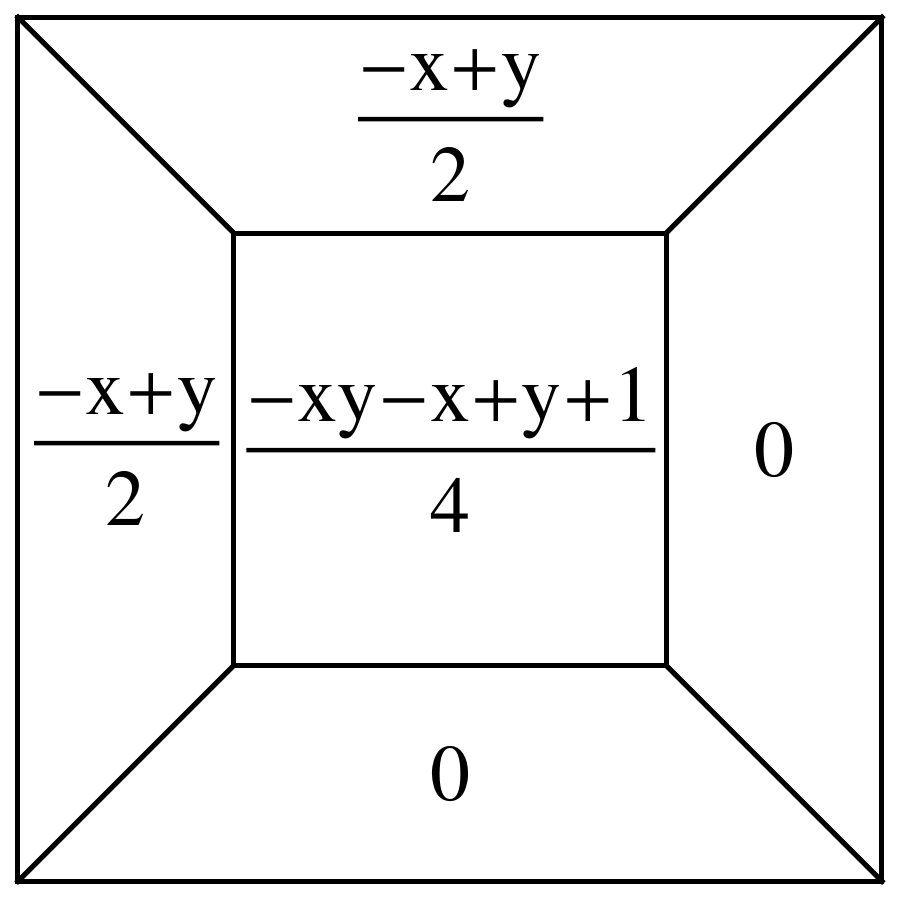}
\end{minipage}
\begin{minipage}{.05\textwidth}
\centering
$+$
\end{minipage}

\begin{minipage}{.25\textwidth}
\centering
\includegraphics[scale=.3]{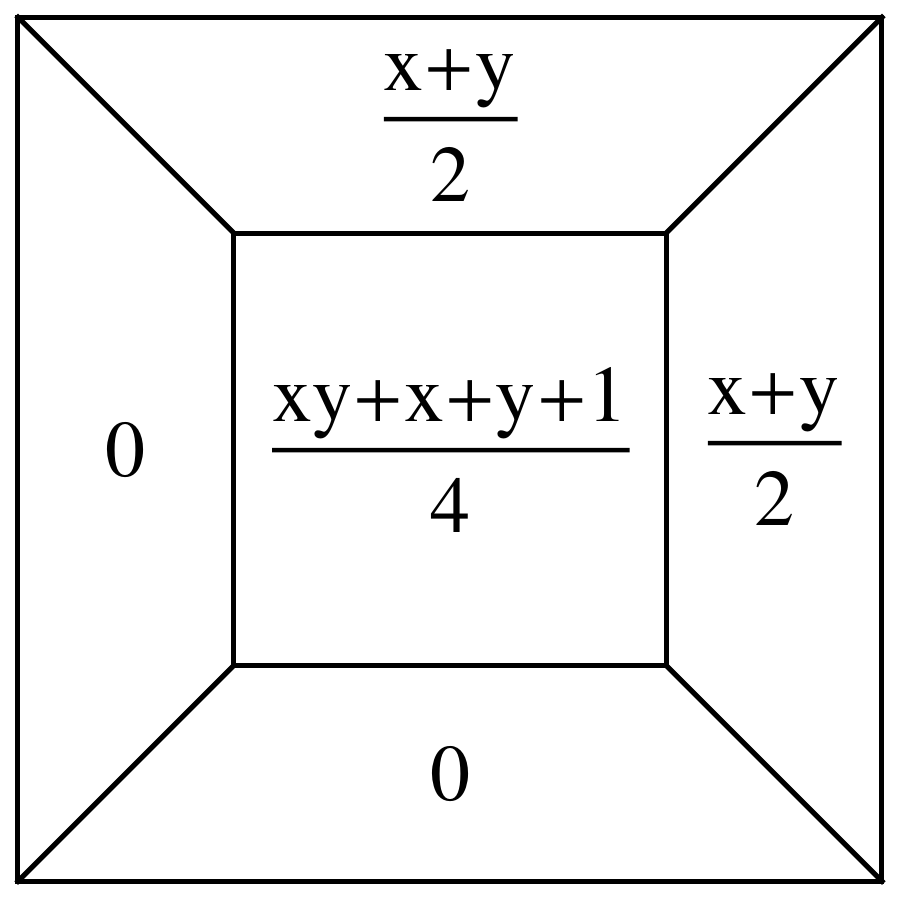}
\end{minipage}
\begin{minipage}{.05\textwidth}
\centering
$+$
\end{minipage}
\begin{minipage}{.25\textwidth}
\centering
\includegraphics[scale=.3]{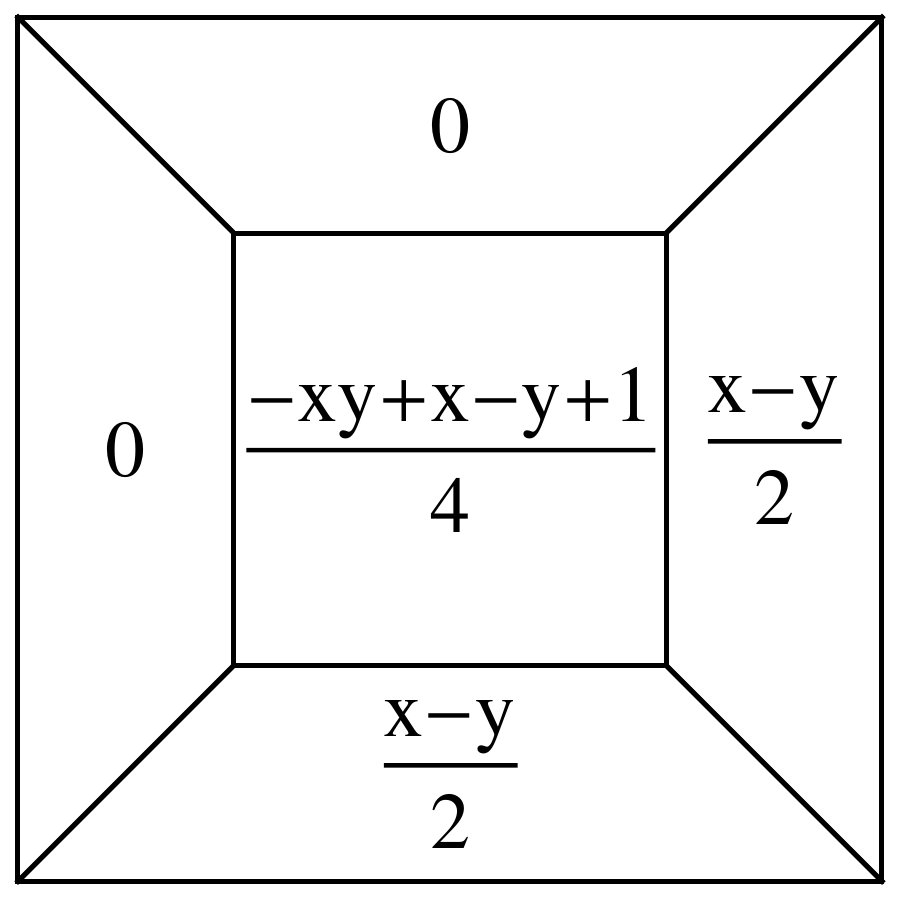}
\end{minipage}
\begin{minipage}{.05\textwidth}
\centering
$+$
\end{minipage}
\begin{minipage}{.25\textwidth}
\centering
\includegraphics[scale=.3]{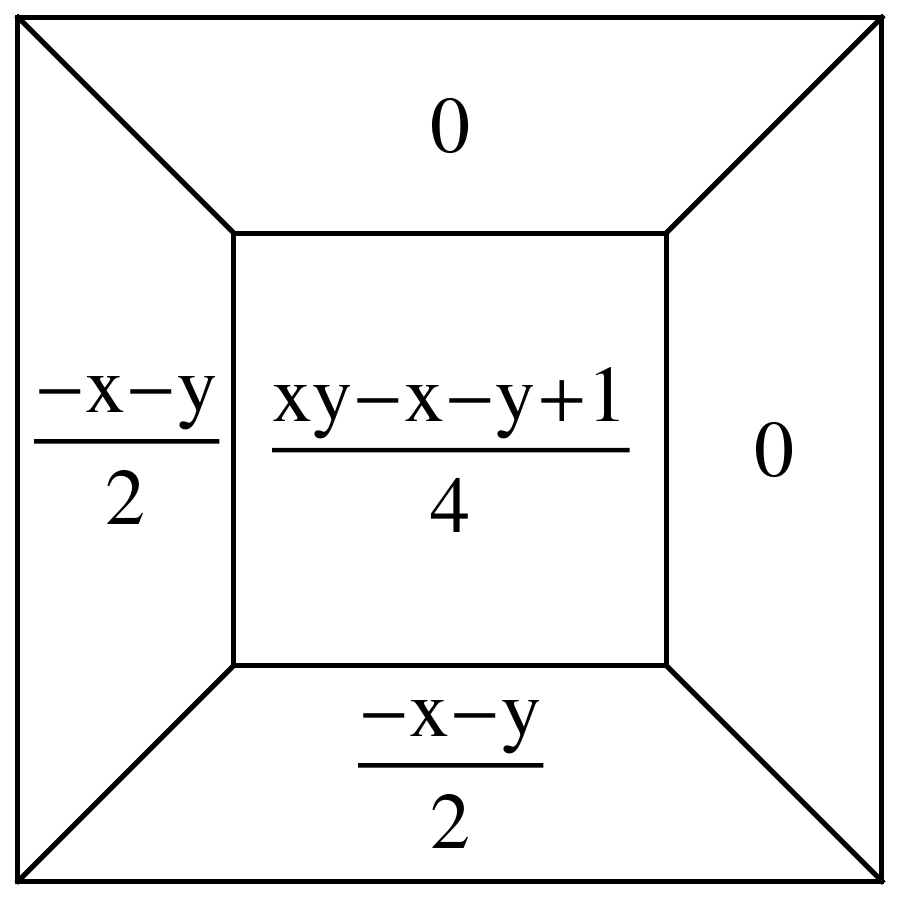}
\end{minipage}
\end{flushleft}
\caption{A `local' decomposition of $\mathbf{1}\in C^2_0(\QC)$}\label{1decomp}
\end{figure}

The support of the first spline in this sum is the annular subcomplex in Figure~\ref{xi}.  According to Theorem~\ref{main}, $C^r_d(\QC)$ has a basis of splines supported on either the star of an interior vertex or one of the shaded complexes in Figure~\ref{xi}, for $d\gg 0$ (See Example~\ref{Gam}).  We call such complexes \textit{lattice complexes}, explained in $\S 2$.

\begin{figure}[htp]
\centering
\subfloat{
\includegraphics[scale=.23]{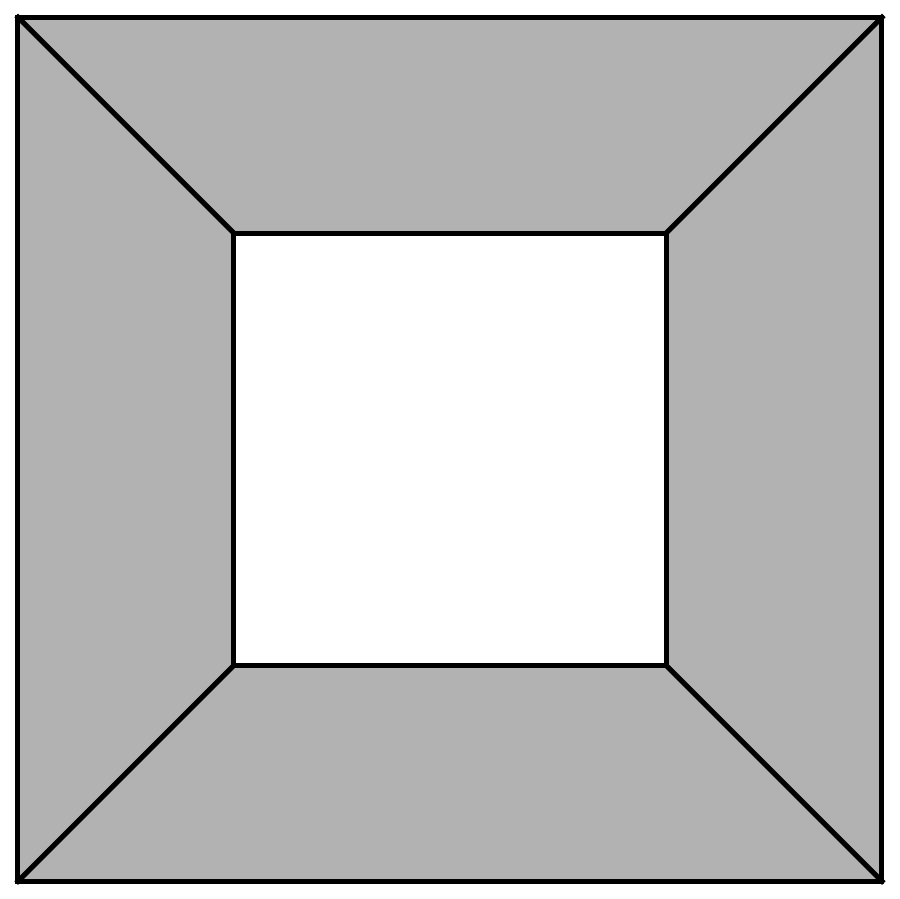}
}
\subfloat{
\includegraphics[scale=.23]{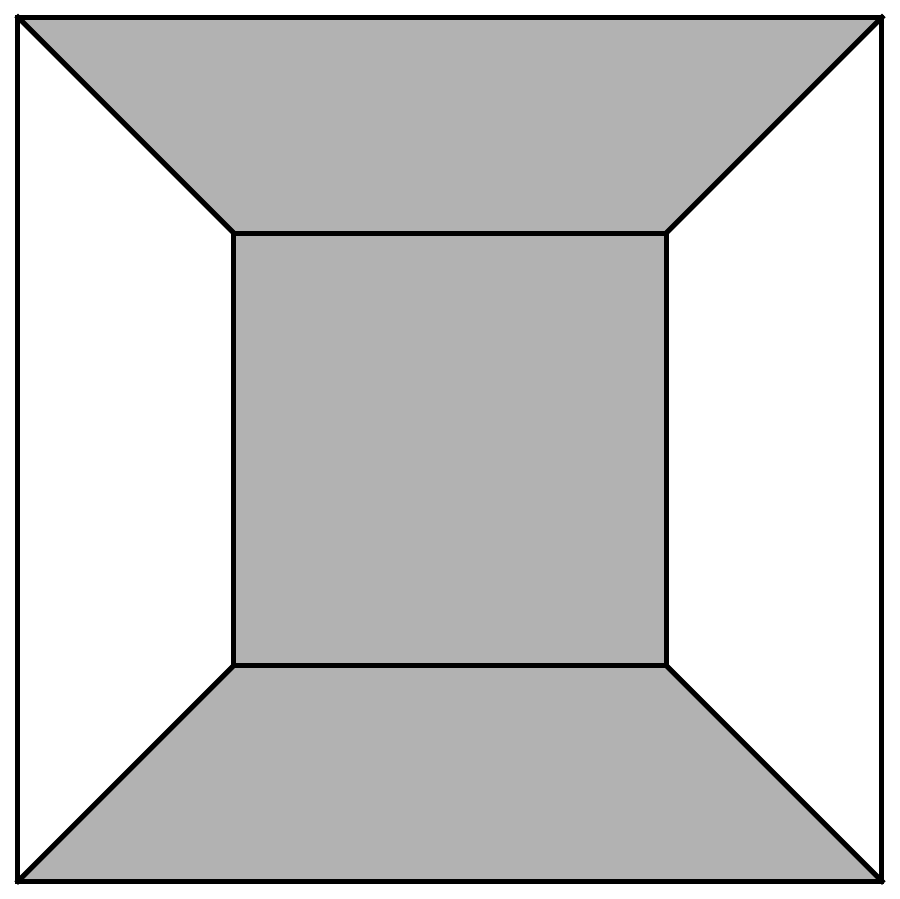}
}
\subfloat{
\includegraphics[scale=.23]{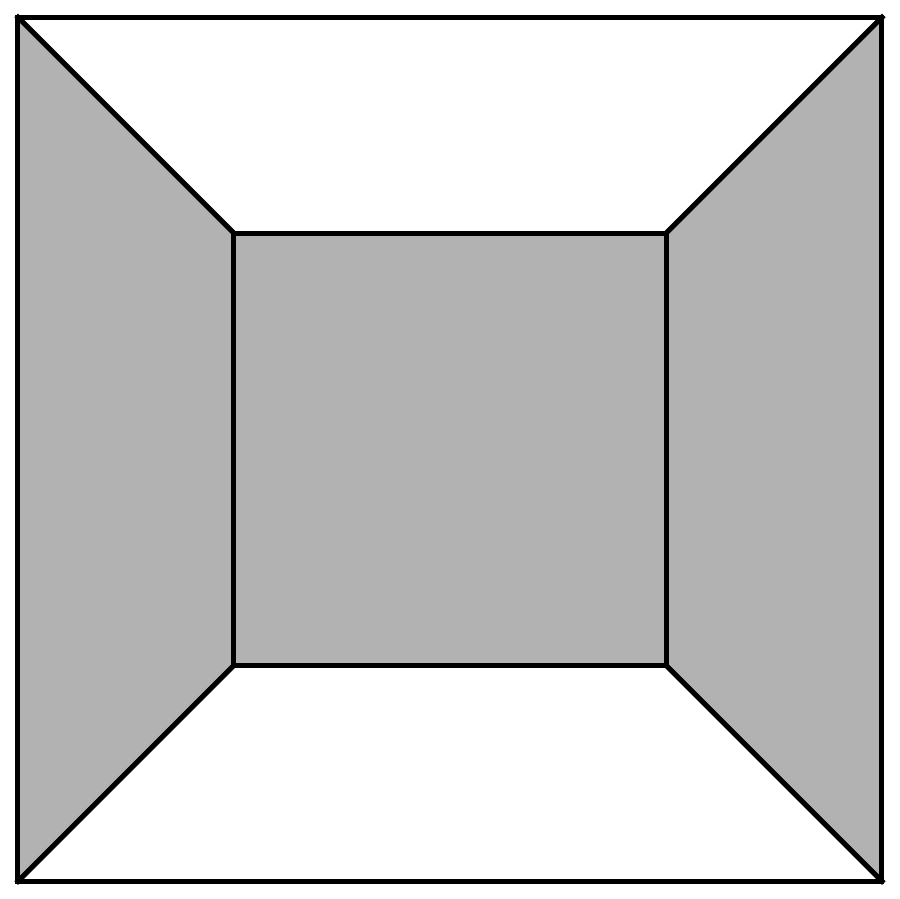}
}
\caption{Lattice Complexes of $\QC$}\label{xi}
\end{figure}

\begin{flushleft}
It is the presence of such an annular subcomplex in Figure~\ref{xi} that contributes to the constant term of the dimension formula $\mbox{dim}_\R C^r_d(\QC)$ for $d\gg 0$ provided in \cite{TSchenck08}.  So the lattice complexes describe in $\S 2$ encode subtle interactions between the geometry and combinatorics of $\PC$ that are manifested in $C^r(\PC)$.
\end{flushleft}

\begin{minipage}[h]{.4\textwidth}
\centering
\includegraphics[scale=.45]{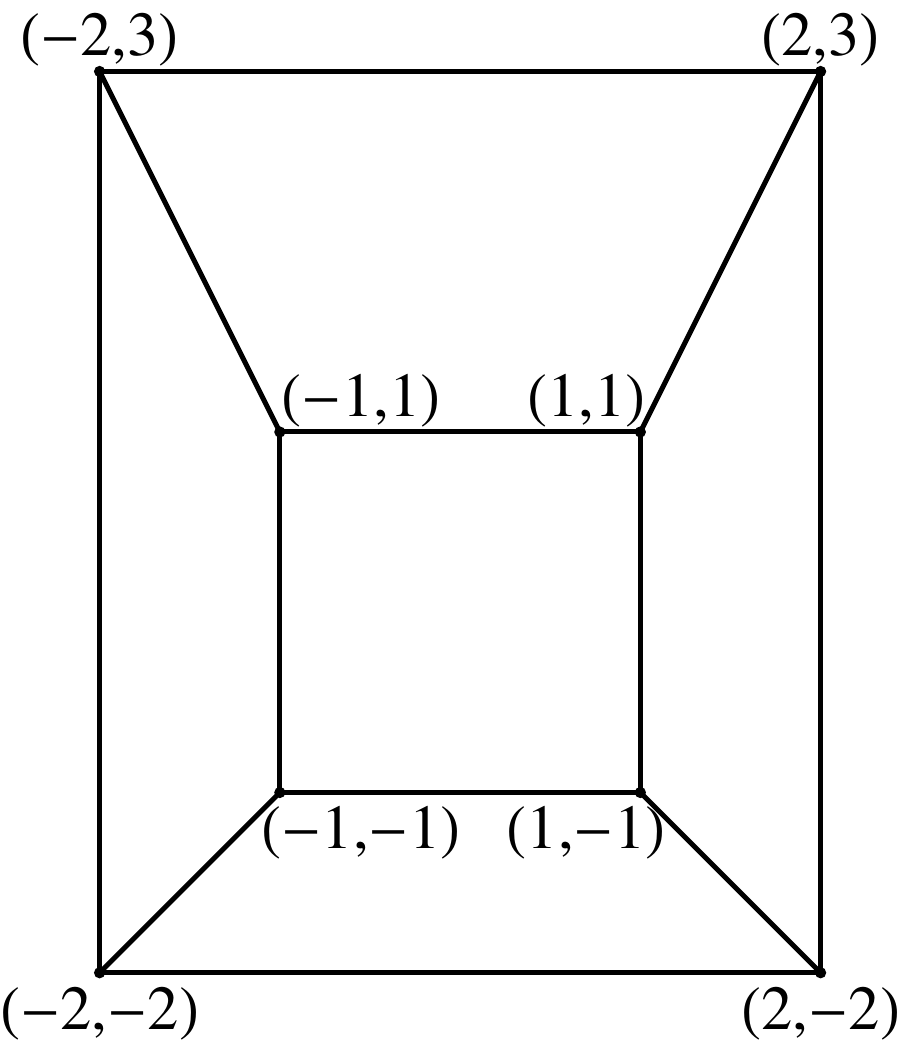}
\captionof{figure}{$\QC'$}\label{DSEVL}
\end{minipage}
\begin{minipage}[h]{.5\textwidth}
If we disturb the symmetry of $\QC$ slightly to get $\QC'$ as in Figure~\ref{DSEVL}, then the affine spans of the four edges connecting the inner and outer squares do not all intersect at the same point.  By Theorem~\ref{main}, $C^r_d(\QC')$ has a basis of splines with support in either the star of an interior vertex or one of the shaded complexes in Figure~\ref{dcx}, for $d\gg 0$ (see Example~\ref{Gam}).
\end{minipage}

\begin{figure}[htp]
\subfloat{
\includegraphics[scale=.3]{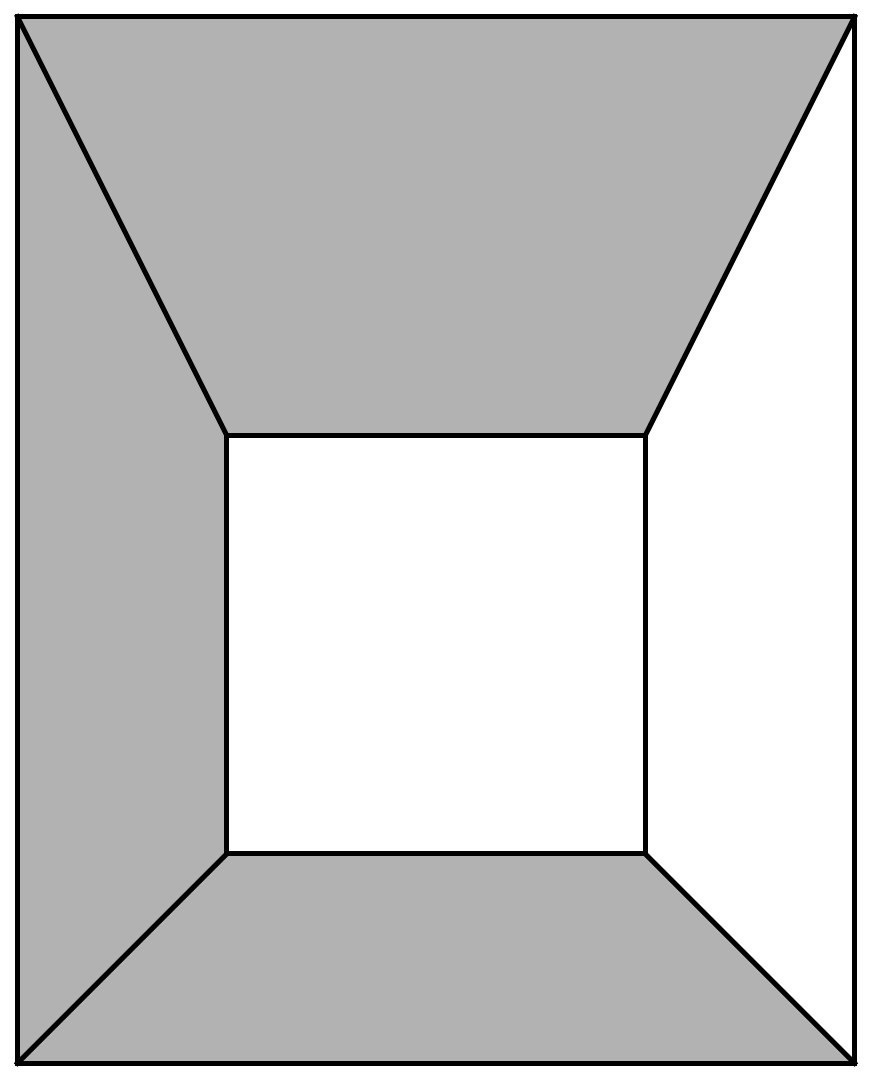}
}
\subfloat{\includegraphics[scale=.3]{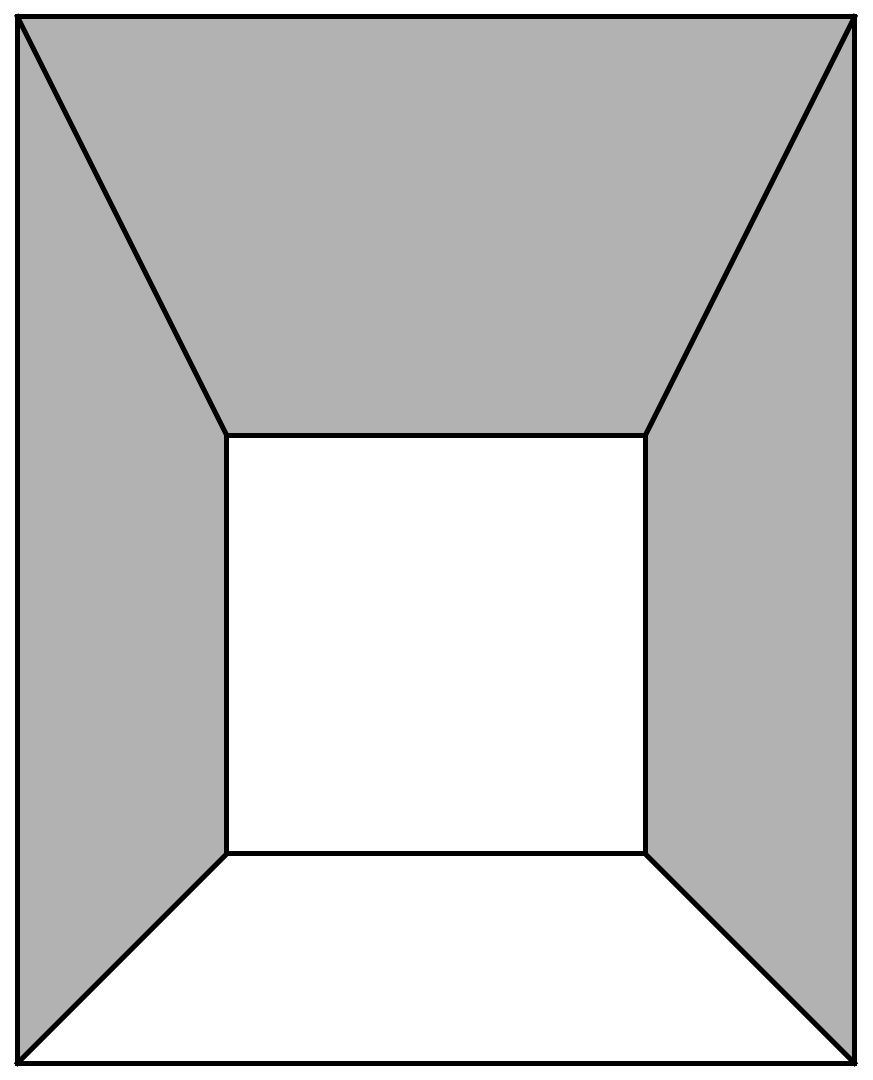}
}
\subfloat{
\includegraphics[scale=.3]{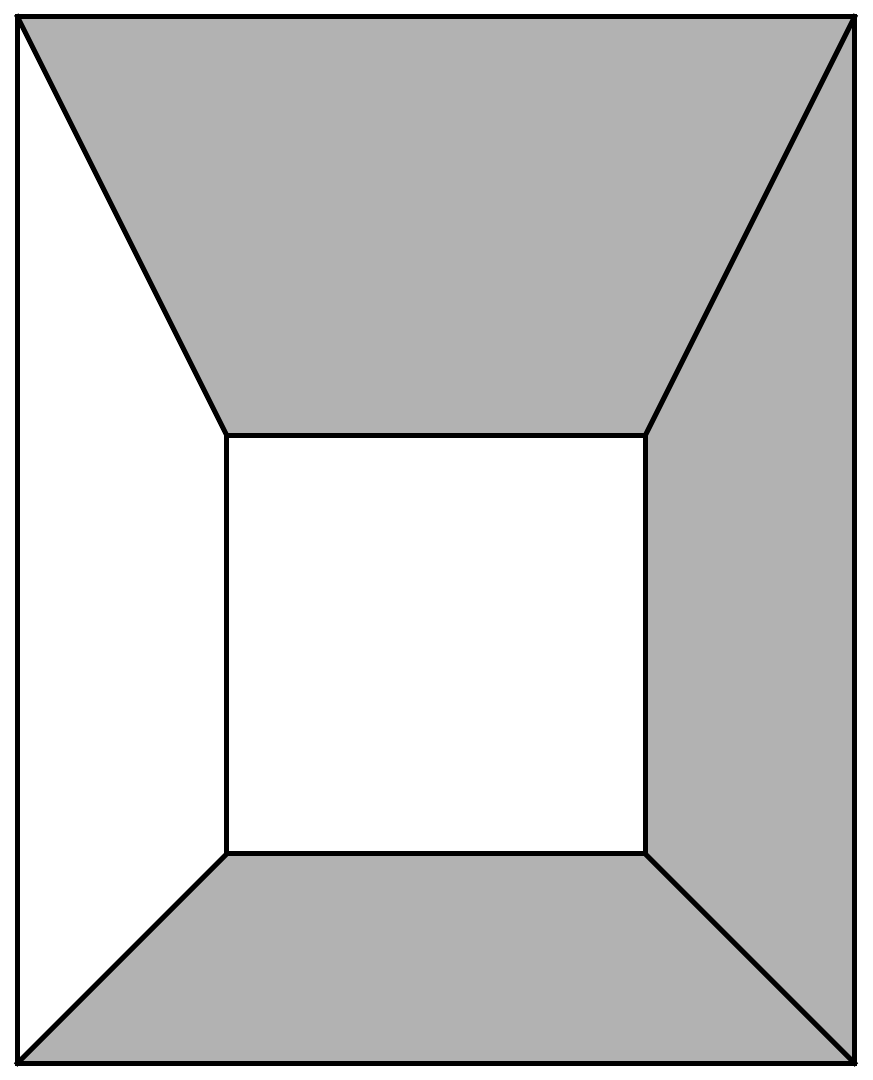}
}

\subfloat{
\includegraphics[scale=.3]{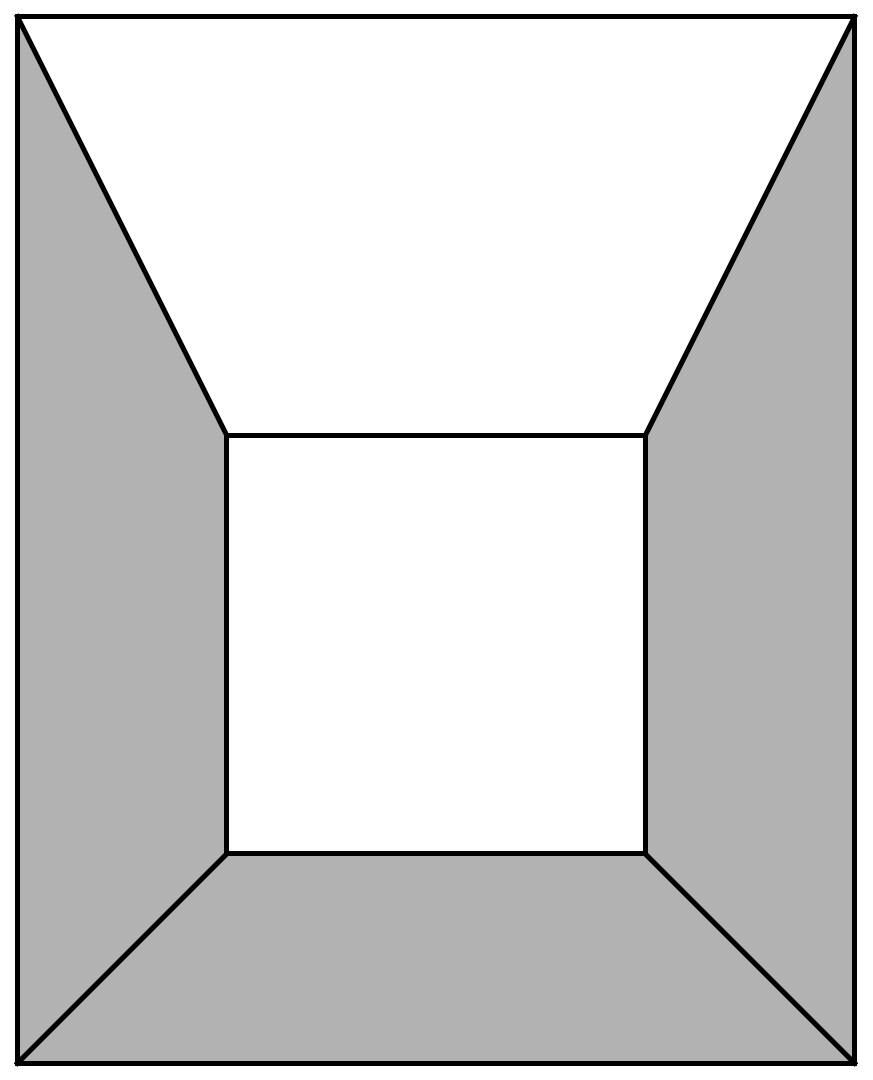}
}
\subfloat{
\includegraphics[scale=.3]{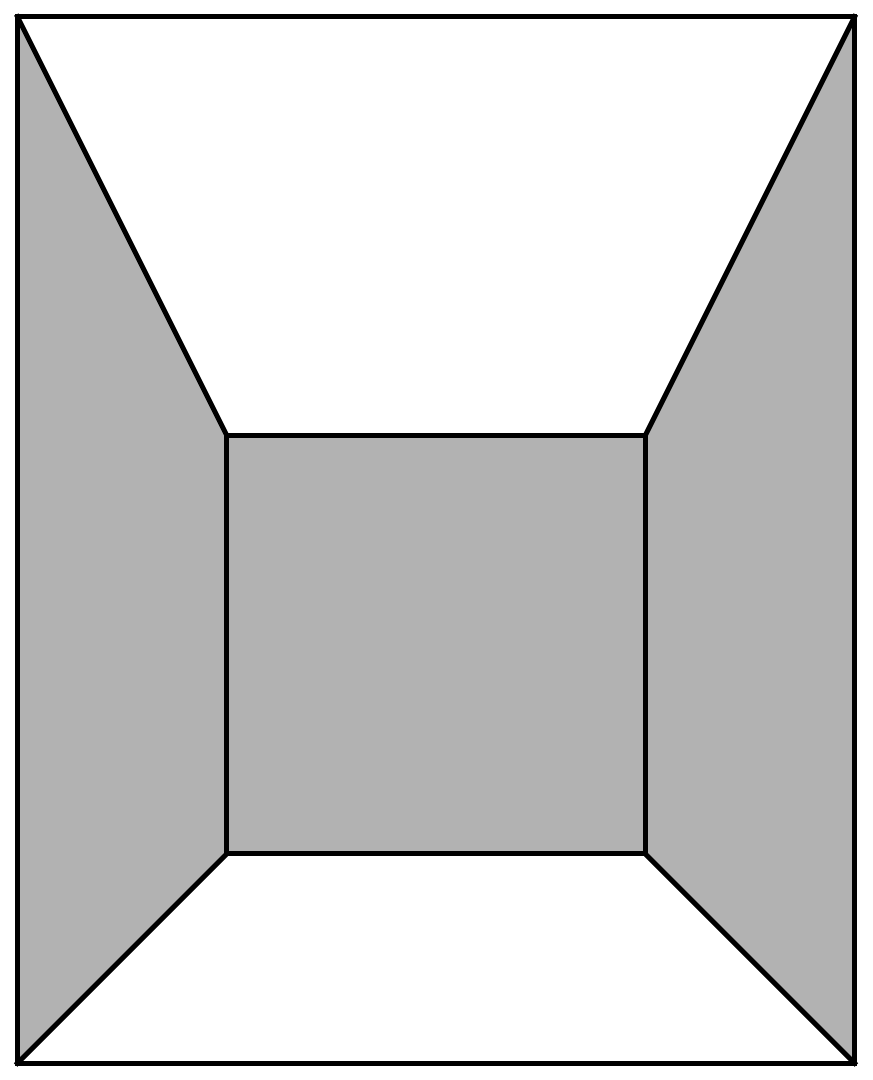}
}
\subfloat{
\includegraphics[scale=.3]{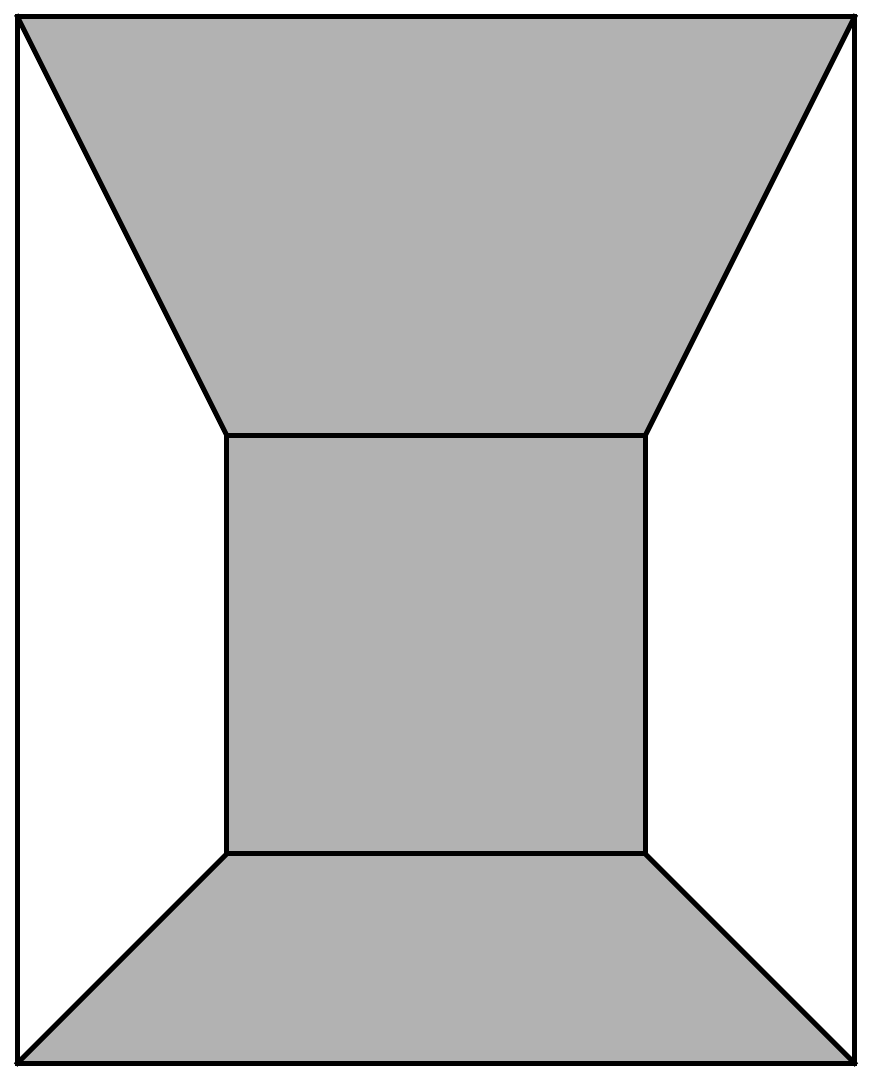}
}
\caption{Lattice Complexes of $\QC'$}\label{dcx}
\end{figure}

\section{Lattice Complexes}
We begin with some preliminary notions.  A \textit{polytopal complex} $\PC\subset\R^n$ is a finite set of convex polytopes (called \textit{faces} of $\PC$) in $\R^n$ such that
\begin{itemize}
\item If $\gamma\in\PC$, then all faces of $\gamma$ are in $\PC$.
\item If $\gamma,\tau\in\PC$ then $\gamma\cap\tau$ is a face of both $\gamma$ and $\tau$ (possibly empty).
\end{itemize}
The \textit{dimension} of $\PC$ is the greatest dimension of a face of $\PC$. 
The faces of $\PC$ are ordered via inclusion; a maximal face of $\PC$ is 
called a \textit{facet} of $\PC$, and $\PC$ is said to be \textit{pure} if all facets are equidimensional.  $|\PC|$ denotes the underlying space of $\PC$. $\PC_i$ and $\PC^0_i$ denote the set of $i$-faces and the set of interior $i$-faces, respectively.  In the case that all facets of $\PC$ are simplices, $\PC$ is a simplicial complex and will be denoted by $\Delta$.

Given a complex $\PC$ and a face $\gamma\in\PC$, the \textit{star} of $\gamma$ in $\PC$, denoted $\mbox{st}_\PC  (\gamma)$, is defined by
\[
\mbox{st}_\PC (\gamma):=\{\psi\in\PC|\exists\sigma\in\PC, \psi\in\sigma,\gamma\in\sigma \}.
\]
This is the smallest subcomplex of $\PC$ which contains all faces which contain $\gamma$.  If the complex $\PC$ is understood we will write $\mbox{st}(\gamma)$.

For $\PC\subset\R^n$, $G(\PC)$ is a graph, with a vertex for every facet (element of $\PC_n$); two vertices are joined by an edge iff the corresponding facets $\sigma$ and $\sigma'$ satisfy $\sigma\cap\sigma'\in\PC_{n-1}$.  $\PC$ is said to be \textit{hereditary} if $G(\mbox{st}_\PC(\gamma))$ is connected for every nonempty $\gamma\in\PC$.  Throughout this paper, $\PC\subset\R^n$ is assumed to be a pure, $n$-dimensional, hereditary polytopal complex.

Let $R=\R[x_1,\ldots,x_n]$ be the polynomial ring in $n$ variables.  For a subset $S\subset\R^n$, let $I(S)\subset R$ denote the ideal of polynomials vanishing on $S$.  If $\tau\in\PC_{n-1}$ then $l_\tau$ denotes any linear form generating the principal ideal $I(\tau)$.

In what follows we use a subgraph $G_J(\PC)$ of $G(\PC)$ determined by an ideal $J\subset R$.  This is a slight generalization of a graph used by McDonald and Schenck in \cite{TSchenck08}.

\begin{defn}\label{graph} 
Let $\PC\subset\R^n$ be a polyhedral complex, and $J\subset R$ an ideal.  The vertices of the graph $G_J(\PC)$ correspond to facets of $\PC$ having a codimension one face $\tau$ such that $l_\tau\in J$.  Two vertices corresponding to facets $\sigma_1$ and $\sigma_2$ which intersect along the edge $\tau$ are connected in $G_J(\PC)$ if $l_\tau\in J$.
\end{defn}

Let $G_J(\PC)$ be the union of $k$ connected components $G^1_J(\PC),\ldots,G^k_J(\PC)$.  There is a unique subcomplex $\PC^i_J$ of $\PC$ whose dual graph is $G^i_J(\PC)$.

\begin{defn}
With notation as above, define $\PC_J$ to be the \textbf{disjoint} union of $\PC^1_J,\ldots,\PC^k_J$.
\end{defn}

We call the $\PC^i_J$ the connected components, or simply components, of $\PC_J$, although two components may share codimension one faces within $\PC$ as will be apparent in Example~\ref{SchCube}.  There are only finitely many distinct complexes $\PC_J$ associated to $J\subset R$.  They are in bijection with the nontrivial elements of the intersection poset of a certain hyperplane arrangement, which we now describe.

Recall that a hyperplane arrangement $\mathcal{H}\subset \R^n$ is a finite set $\mathcal{H}=\{H_1,\ldots,H_k\}$ of hyperplanes.  The \textit{intersection poset} $L(\mathcal{H})$ of $\mathcal{H}$ includes the whole space, the hyperplanes $H_i$, and nonempty intersections of these hyperplanes (called \textit{flats}) ordered with respect to reverse inclusion.  $L(\mathcal{H})$ is a \textit{ranked} poset with rank function $\mbox{rk}(W)=\mbox{codim}(W)$ for $W\in L(\mathcal{H})$.  $L(\mathcal{H})$ is a \textit{meet} semilattice and is a lattice iff $\mathcal{H}$ is \textit{central}, that is, iff $\cap_i H_i\neq \emptyset$.

\begin{defn}
Let $\PC\subset\R^n$ be a polyhedral complex.
\begin{enumerate}
\item For $\tau\in\PC$ a face, $\mbox{aff}(\tau)$ denotes the linear (or affine) span of $\tau$.
\item $\A(\PC)$ denotes the hyperplane arrangement $\cup_{\tau\in\PC^0_{n-1}}\mbox{aff}(\tau)$.
\item $L_\PC$ denotes the intersection semi-lattice $L(\A(\PC))$ of $\A(\PC)$.
\end{enumerate}
\end{defn}

\begin{lem}\label{lat}
For every ideal $J\subset R$, there is a unique $W\in L_\PC$ so that $\PC_J=\PC_{I(W)}$.  Furthermore, the ideal $I(W)$ is minimal with respect to $\PC_{I(W)}=\PC_J$.
\end{lem}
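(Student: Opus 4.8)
The plan is to analyze how the graph $G_J(\PC)$ depends on $J$ and show it only sees $J$ through the set of edge-forms $l_\tau$ that lie in $J$, then identify that data with a flat of the arrangement $\A(\PC)$.

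First I would observe that $G_J(\PC)$, hence each component $\PC^i_J$ and hence $\PC_J$, depends on $J$ only through the subset $T_J := \{\tau \in \PC^0_{n-1} : l_\tau \in J\}$ of interior codimension one faces: both the vertex set and the edge set of $G_J(\PC)$ in Definition~\ref{graph} are defined purely in terms of which forms $l_\tau$ belong to $J$. So it suffices to understand which subsets of $\PC^0_{n-1}$ arise as $T_J$ and to find, for each, a canonical ideal realizing it. Next I would pass to the arrangement: to a face $\tau\in\PC^0_{n-1}$ corresponds the hyperplane $\mathrm{aff}(\tau)\in\A(\PC)$, and $l_\tau\in J$ exactly says $\mathrm{aff}(\tau)\supseteq V(J)$ (the vanishing locus), i.e. $\mathrm{aff}(\tau)$ contains the affine variety cut out by $J$. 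Given an arbitrary ideal $J$, let $W\in L_\PC$ be the smallest flat containing $V(J)$ that is an intersection of hyperplanes of $\A(\PC)$ — concretely, $W = \bigcap\{\mathrm{aff}(\tau) : \tau\in T_J\}$ when $T_J\ne\emptyset$, together with the convention handling $W=\R^n$ (equivalently $J$ not contained in any $(l_\tau)$) when $T_J=\emptyset$, in which case $\PC_J$ is empty. One must check $W\in L_\PC$: it is a (possibly empty, but here nonempty since it contains $V(J)$ when $T_J\neq\emptyset$) intersection of elements of $\A(\PC)$, hence a flat.

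Then I would verify $T_{I(W)} = T_J$, which gives $\PC_{I(W)} = \PC_J$ by the first observation. The inclusion $T_J\subseteq T_{I(W)}$ holds because each $\tau\in T_J$ has $\mathrm{aff}(\tau)\supseteq W$, so $l_\tau\in I(\mathrm{aff}(\tau))\subseteq I(W)$. The reverse inclusion $T_{I(W)}\subseteq T_J$ needs: if $l_\tau\in I(W)$, i.e. $\mathrm{aff}(\tau)\supseteq W \supseteq V(J)$, then $l_\tau\in I(V(J))$; here one uses that $l_\tau$ is a linear form and $I(V(J))$ restricted to linear forms is spanned by the linear forms in $J$ — more precisely, since $l_\tau$ is linear and vanishes on the affine set $V(J)$, and $V(J)$ spans an affine subspace $U$, $l_\tau$ vanishes on $U = \mathrm{aff}(V(J))$, and by construction $W\subseteq \mathrm{aff}(\tau)$ was chosen using exactly the hyperplanes through $V(J)$; so $\tau\in T_J$. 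For uniqueness of $W$ and minimality of $I(W)$: if $W'\in L_\PC$ also satisfies $\PC_{I(W')}=\PC_J$ then $T_{I(W')}=T_J$, so $W'$ is contained in every $\mathrm{aff}(\tau)$ with $\tau\in T_J$, hence $W'\subseteq W$; combined with the fact that $W$ itself works, minimality of the flat (equivalently maximality of $I(W)$ among $I(W')$ with the right $\PC_{I(W')}$) follows, and $I(W)\subseteq I(W')$ gives the minimality statement for the ideal once one checks the convention ``minimal ideal'' is read as ``minimal among $I(V)$, $V$ a flat.''

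I expect the main obstacle to be pinning down the edge cases and the exact sense of ``minimal'': when $J$ contains no $l_\tau$ at all the complex $\PC_J$ is empty and one needs the convention that the corresponding $W$ is the whole space $\R^n$ (the top-by-reverse-inclusion, bottom-by-rank element of $L_\PC$), which is mildly awkward since the lemma says ``$W\in L_\PC$'' and one must decide whether $\R^n$ counts; and the phrase ``$I(W)$ is minimal with respect to $\PC_{I(W)}=\PC_J$'' must be interpreted as minimal among ideals of the form $I(W')$ for $W'$ a flat (or, what is the same by the correspondence, one shows directly $I(W)\subseteq J'$ for any ideal $J'$ with $\PC_{J'}=\PC_J$ that is radical and linear, but in general $J$ itself need not contain $I(W)$). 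Clarifying these conventions and then the ideal-theoretic bookkeeping translating ``$l_\tau\in J$'' into ``$\mathrm{aff}(\tau)\supseteq V(J)$'' is where care is required; the combinatorial core — that everything factors through $T_J$ and that intersections of the $\mathrm{aff}(\tau)$ are exactly the flats — is straightforward.
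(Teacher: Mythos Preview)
Your route is essentially the paper's: take $W$ to be the intersection of the affine spans of the codimension-one faces singled out by $J$, then verify $\PC_{I(W)}=\PC_J$, minimality, and uniqueness. The paper indexes this intersection over $(\PC_J)^0_{n-1}$ where you use $T_J$, but the outline is the same.

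The genuine gap is in your reverse inclusion $T_{I(W)}\subseteq T_J$. The asserted biconditional ``$l_\tau\in J$ exactly says $\mbox{aff}(\tau)\supseteq V(J)$'' is false---only the forward implication holds (try $J=(l_\tau^2)$)---and passing through $V(J)$ and $I(V(J))$ cannot recover membership in $J$: there is no Nullstellensatz over $\R$, and even if there were you would only reach $\sqrt{J}$. The sentence beginning ``more precisely'' is then circular. The fix is simpler and bypasses $V(J)$ entirely: an ideal generated by affine-linear forms is already prime, so
\[
I(W)=I\Bigl(\bigcap_{\tau\in T_J}\mbox{aff}(\tau)\Bigr)=\langle\, l_\tau : \tau\in T_J\,\rangle\subseteq J,
\]
the last inclusion holding because each $l_\tau$ with $\tau\in T_J$ lies in $J$ by definition. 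This gives $T_{I(W)}\subseteq T_J$ immediately and, contrary to your final-paragraph worry, shows $I(W)\subseteq J$ with no radical or linearity hypothesis on $J$; the paper's minimality statement is meant in this strong sense. For uniqueness, your sketch assumes $\PC_{I(W')}=\PC_J\Rightarrow T_{I(W')}=T_J$, which is not justified; the paper instead uses the minimality just proved to get $W'\subseteq W$ and then rules out $W'\subsetneq W$ by exhibiting some $\tau\in(\PC_{I(W')})^0_{n-1}\setminus(\PC_J)^0_{n-1}$.
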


\begin{proof}
Set $W=\cap_{\tau\in(\PC_J)^0_{n-1}} \mbox{aff}(\tau)$.  Clearly $\PC_{I(W)}=\PC_J$.  To prove minimality, let $Q$ be any ideal satisfying $\PC_Q=\PC_J$.  Then all codim $1$ faces $\tau\in(\PC_Q)^0_{n-1}$ satisfy $l_\tau\in Q$.  Since $(\PC_Q)^0_{n-1}=(\PC_J)^0_{n-1}$, $I(W)\subset Q$.  To show uniqueness, assume $V\in L_\PC$ and $\PC_{I(V)}=\PC_J$.  By minimality of $I(W)$, $I(W)\subset I(V)$, implying $V\subset W$.  If $V\subsetneq W$, then there is some $\tau\in\PC_{n-1}^0\setminus (\PC_J)^0_{n-1}$ so that $V\subset\mbox{aff}(\tau)$.  But then $\tau$ is an interior edge of $\PC_{I(V)}$ that is not an interior edge of $\PC_J$, a contradiction.  So $V=W$.
\end{proof}

For brevity, henceforth we write $G_S(\PC)$ and $\PC_S$ to denote $G_{I(S)}(\PC)$ and $\PC_{I(S)}$ for $S\subset R^n$.

\begin{exm}\label{SchCube}
The planar polytopal complex $\QC$ from the introduction is shown in Figure~\ref{latcxs}, along with its associated line arrangement $\A(\QC)$ and a representative sample of the complexes $\QC_W$ for $W\neq\emptyset\in L_\QC$.  We label the interior edges of $\QC$ by $1,\ldots,8$ and denote their affine spans by $L1,\ldots,L8$.  For each complex $\QC_W$ the facets are shaded and the corresponding flat $W$ is labelled.  If $\QC_W$ is the disjoint union of several subcomplexes $\QC^i_W$, we display these subcomplexes separately.

\begin{figure}[htp]
\captionsetup[subfigure]{labelformat=empty}
\centering
\subfloat[$\QC$]{
\includegraphics[scale=.6]{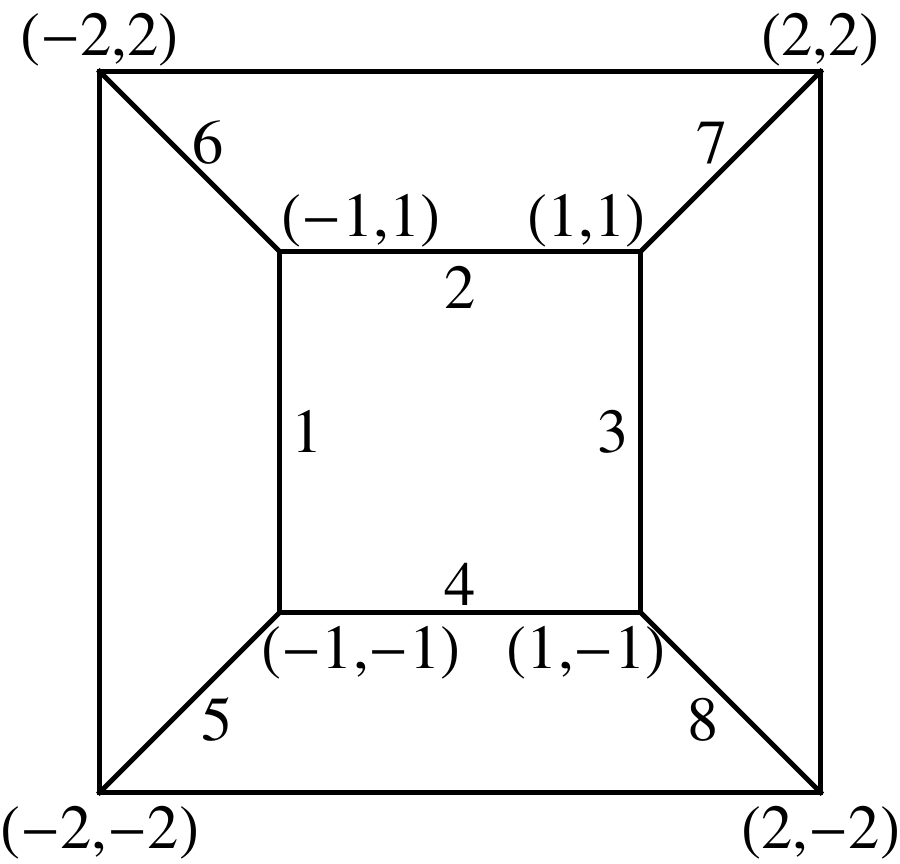}
}
\subfloat[$\A(\QC)$]{
\includegraphics[scale=.6]{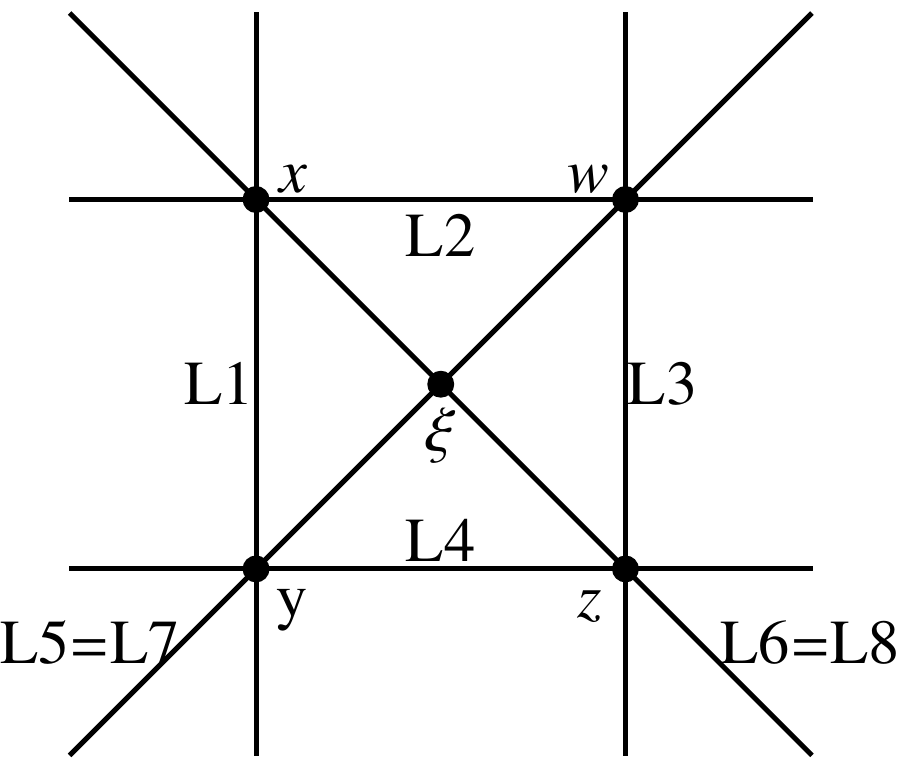}
}

\subfloat[$\QC_{L1}$]{
\includegraphics[scale=.4]{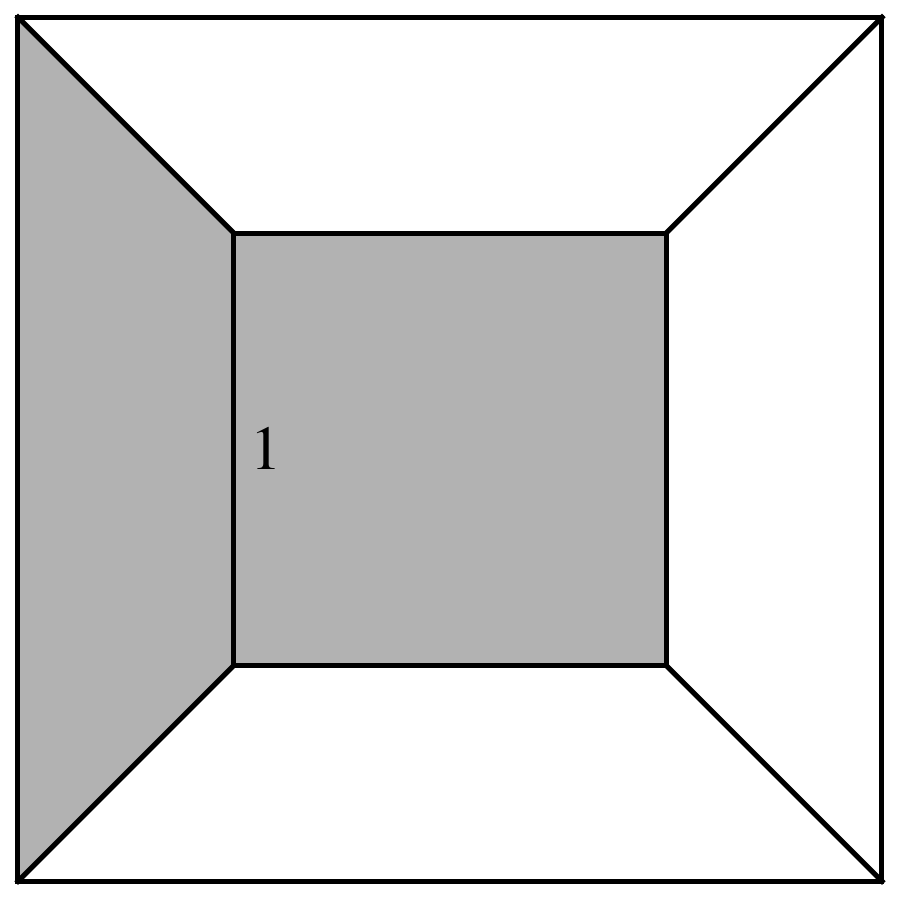}
}
\subfloat[$\QC^1_{L5}$]{
\includegraphics[scale=.4]{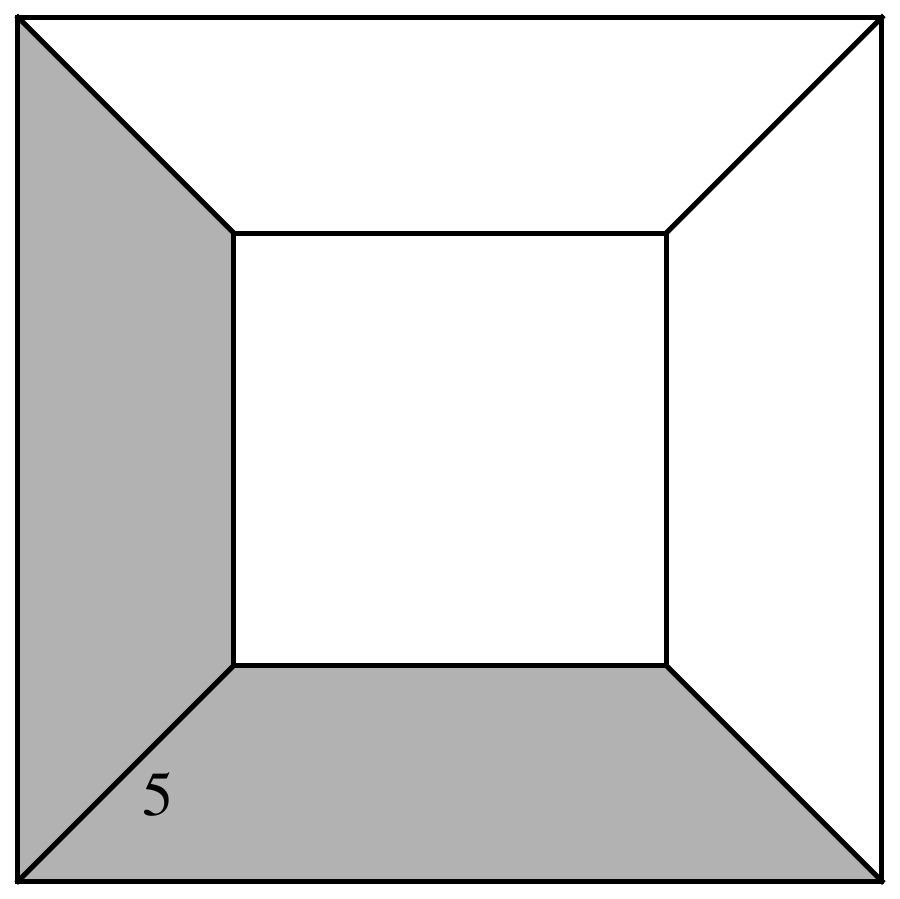}
}
\subfloat[$\QC^2_{L5}$]{
\includegraphics[scale=.4]{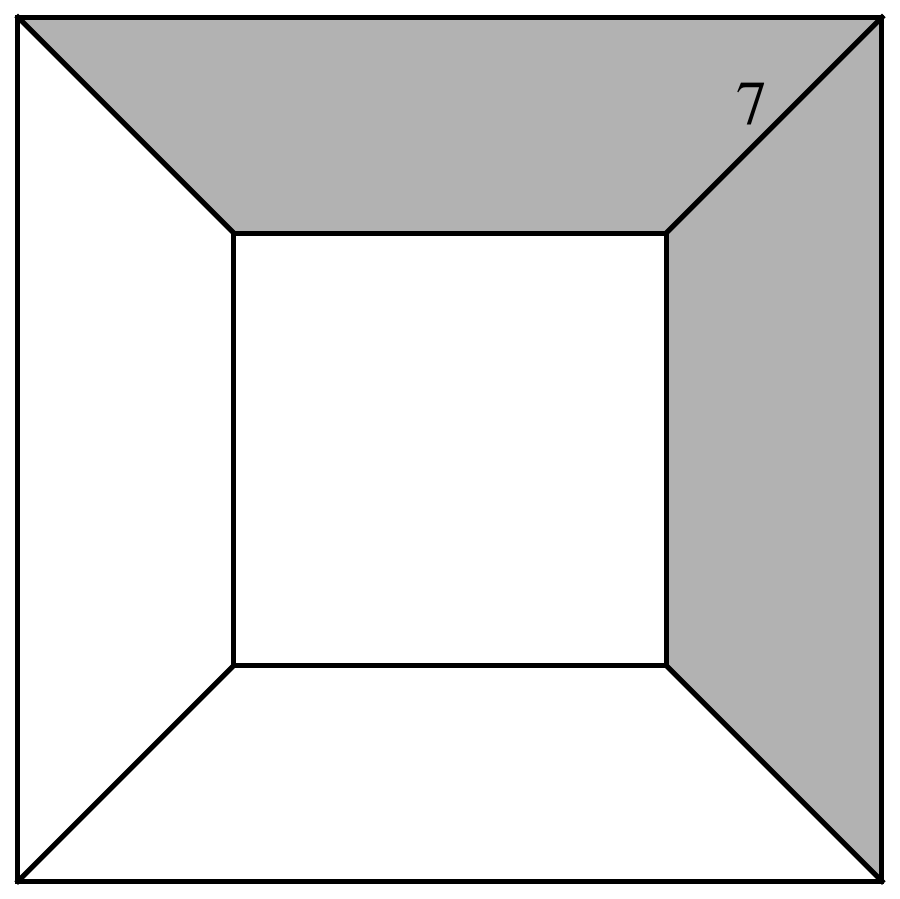}
}

\subfloat[$\QC^1_w$]{
\includegraphics[scale=.4]{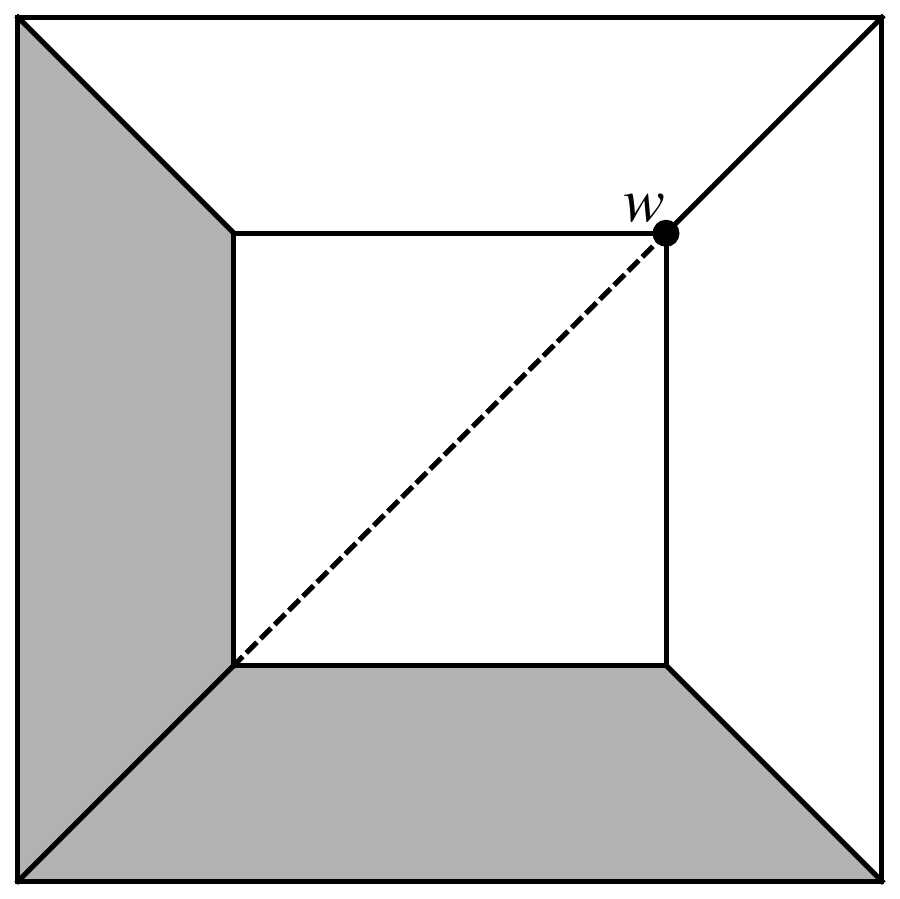}
}
\subfloat[$\QC^2_w$]{
\includegraphics[scale=.4]{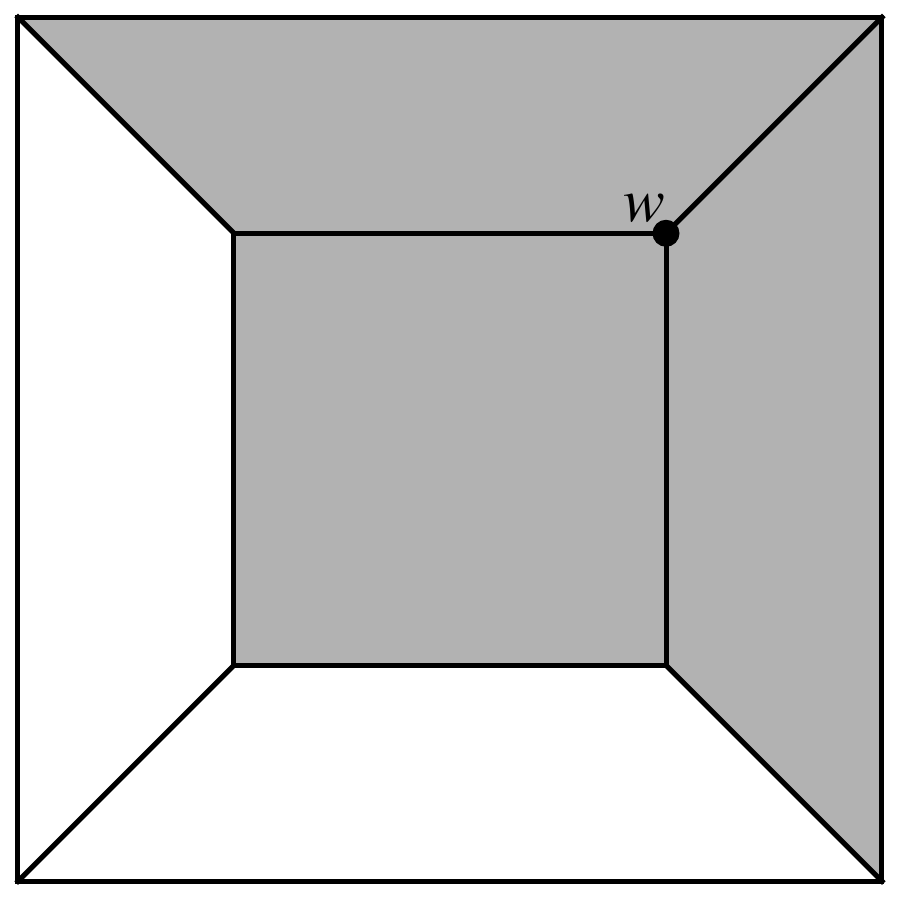}
}
\subfloat[$\QC_\xi$]{
\includegraphics[scale=.4]{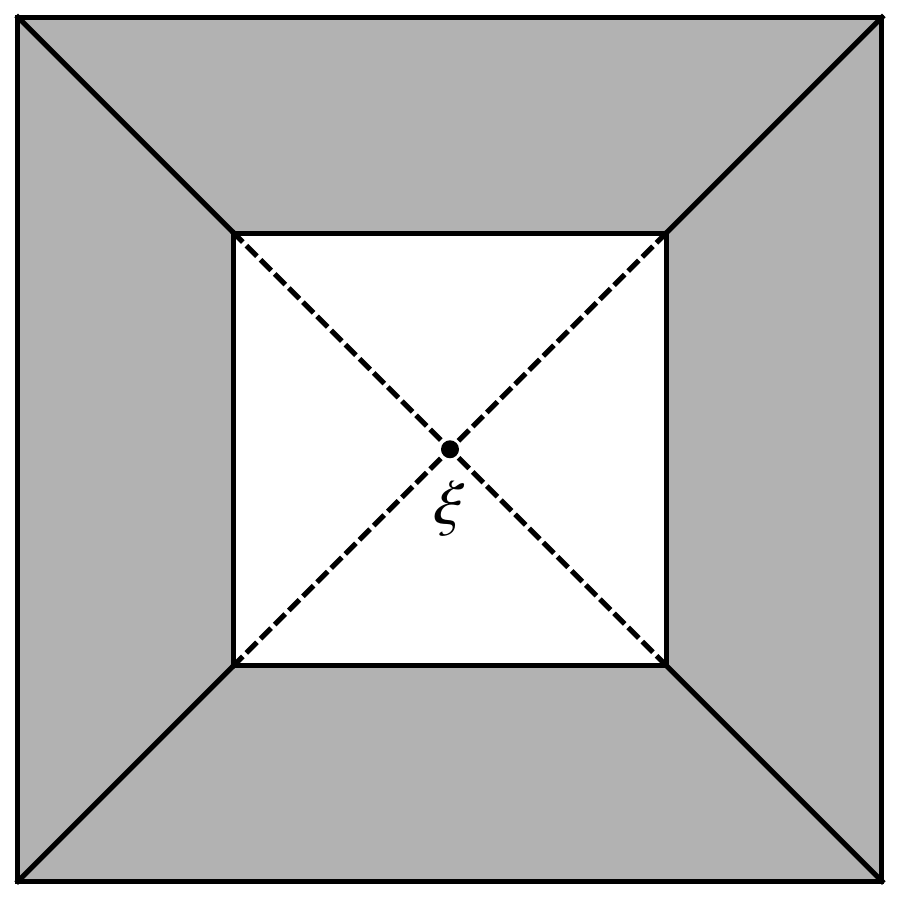}
}
\caption{Lattice Complexes of Example~\ref{SchCube}}\label{latcxs}
\end{figure}
\end{exm}

\subsection{The central case and homegenization}\label{homog}  The case where $\PC$ is a central complex, i.e. $\A(\PC)$ is a central arrangement, is of particular interest for splines since then $C^r(\PC)$ is a graded $R$-algebra.  All the hyperplanes of $\A(\PC)$ pass through the origin (perhaps after a coordinate change), so we can remove the origin and consider the projective arrangement $\mathbb{P}\A(\PC)\subset\mathbb{P}_\R^{n-1}$ obtained by quotienting under the action of $\R^*$ by scalar multiplication.  The intersection poset $L(\mathbb{P}\A(\PC))$ is identical to $L_\PC$ except it may not contain the maximal flat of $L_\PC$ (if that flat was the origin).

One of the most important central complexes for splines is the \textit{homogenization} $\widehat{\PC}\subset\R^{n+1}$ of a polytopal complex $\PC\subset \R^n$.  $\widehat{\PC}\subset\R^{n+1}$ is constructed by taking the join of $i(\PC)$ with the origin in $\R^{n+1}$, where $i:\R^n\rightarrow\R^{n+1}$ is defined by $i(a_1,\ldots,a_n)=(1,a_1,\ldots,a_n)$.  $C^r(\widehat{\PC})$ is a graded algebra whose $d$th graded piece $C^r(\widehat{\PC})_d$ is a vector space isomorphic to $C^r_d(\PC)$ (see \cite{DimSeries}).  If we regard $x_0,\ldots, x_n$ as coordinate functions on $\R^{n+1}$, then we obtain the original complex $\PC$ from $\widehat{\PC}$ by setting $x_0=1$.

\textbf{Remark}: We associate a subcomplex $\PC_W\subset \PC$ to $W\in L_{\widehat{\PC}}$ by slicing the complex $\widehat{\PC}_W$ with the hyperplane $x_0=1$.  Note that the subcomplex $\widehat{\PC}_W$ is the cone over the subcomplex $\PC_W$.  The subcomplexes $\PC_W\subset\PC$ obtained this way are the same as those obtained by first embedding $\PC$ in $\mathbb{P}^n_\R$ by adding the hyperplane at infinity, taking the arrangement of hyperplanes $\A(\PC)$ in $\mathbb{P}^n_\R$ (including intersections in the hyperplane at infinity), and forming the complexes $\PC_W$ for flats $W$ in this projective arrangement.

In Figure~\ref{projective} we show the arrangement $\mathbb{P}\A(\widehat{\QC})$ for the complex $\QC$ in Example~\ref{SchCube}.  The lattice $L_{\widehat{\QC}}$ has two rank $2$ flats $\alpha$ and $\beta$ which do not appear in $L_\QC$, corresponding to the intersections of the two pairs of parallel lines $L1,L3$ and $L2,L4$ in $\mathbb{P}^2$.  The complexes $\QC_\alpha,\QC_\beta$, also depicted in $\mathbb{P}^2$, are identical up to rotation.

\begin{figure}[htp]
\captionsetup[subfigure]{labelformat=empty}
\centering
\subfloat[$\mathbb{P}\A(\widehat{\QC})$]{
\includegraphics[scale=.45]{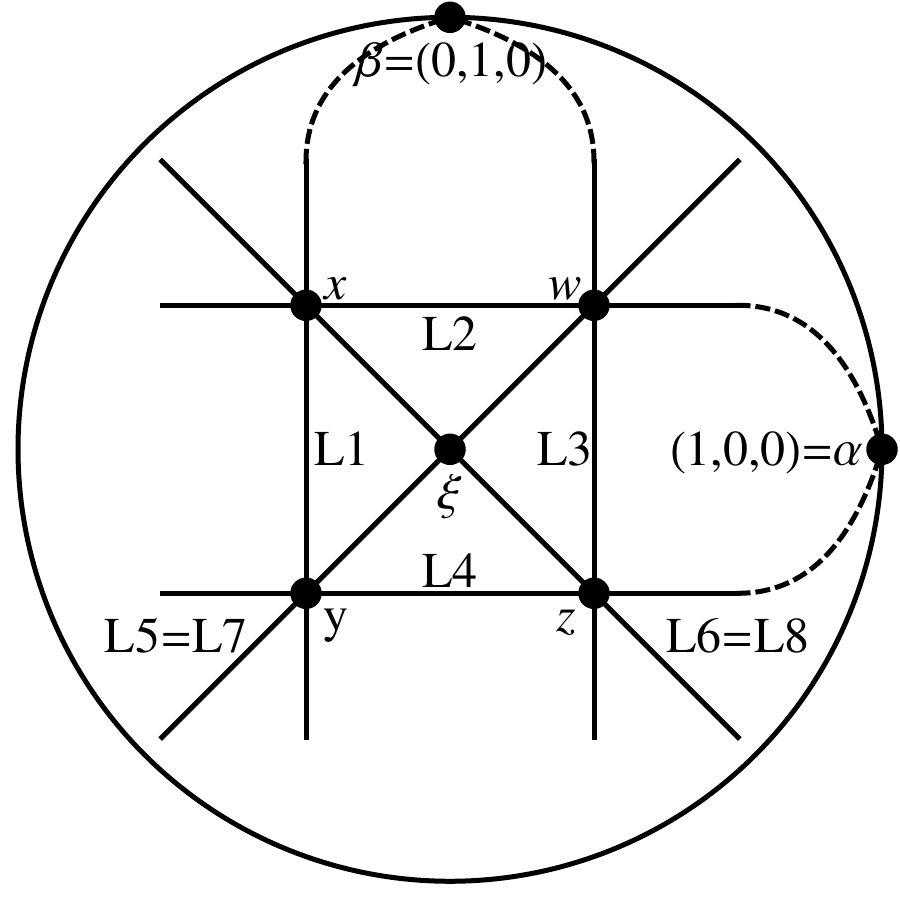}
}
\subfloat[$\QC_\alpha$]{
\includegraphics[scale=.4]{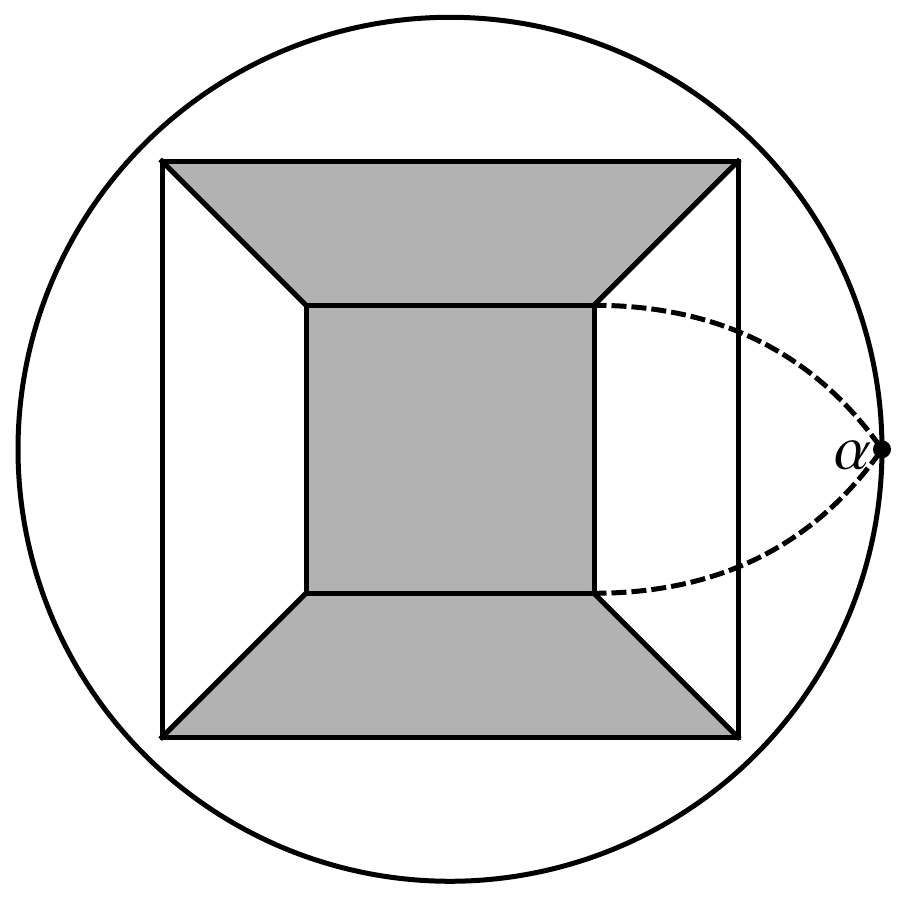}
}
\subfloat[$\QC_\beta$]{
\includegraphics[scale=.4]{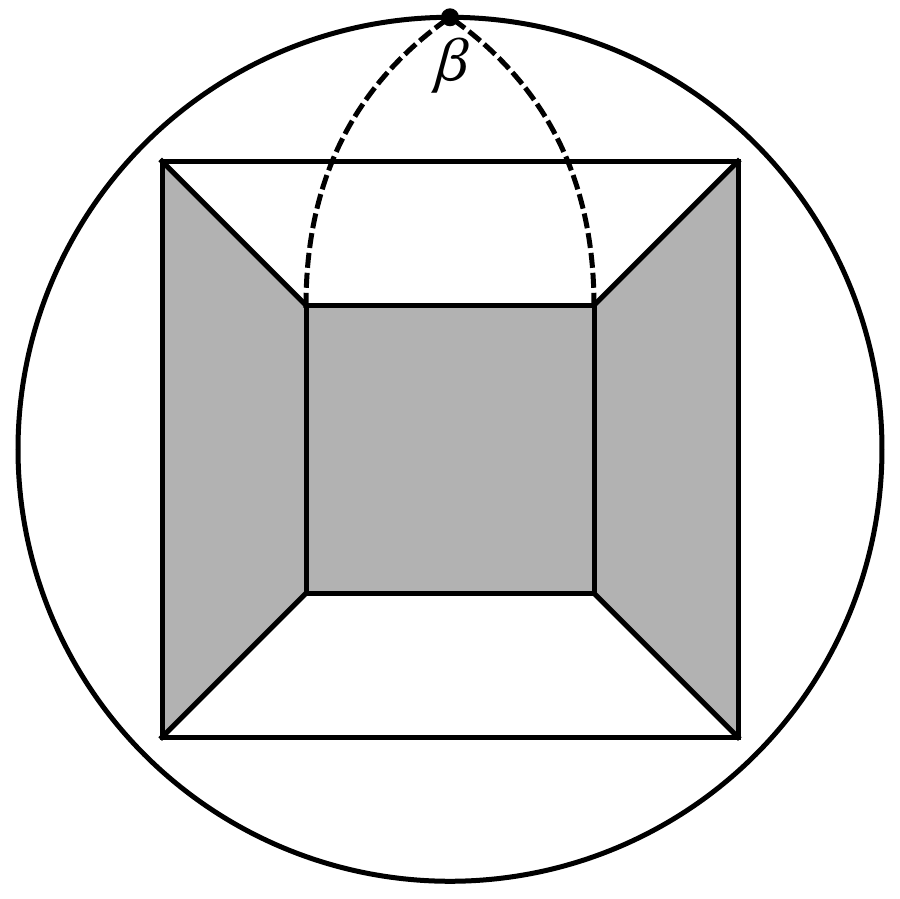}
}
\caption{Projective Lattice Complexes}\label{projective}
\end{figure}

\subsection{The simplicial case}  We close this section by showing that the complexes $\PC_W$ reduce to unions of stars of faces when $\PC=\Delta$ is a pure $n$-dimensional hereditary simplicial complex, as Proposition~\ref{star} shows.  We use the following lemma.

\begin{lem}\label{simplex}
Let $\Delta\subset\R^n$ be an $n+1$-simplex and $\sigma_1,\sigma_2\in\Delta$.  Then $\mbox{aff}(\sigma_1)\cap\mbox{aff}(\sigma_2)=\mbox{aff}(\sigma_1\cap\sigma_2)$.  This includes the case $\mbox{aff}(\sigma_1)\cap\mbox{aff}(\sigma_2)=\emptyset$, assuming $\mbox{aff}(\emptyset)=\emptyset$.
\end{lem}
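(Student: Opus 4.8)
The plan is to exploit the one feature that distinguishes a simplex from a general polytope: its vertices are affinely independent, so every point of $\mathbb{R}^n$ has \emph{unique} barycentric coordinates with respect to them. Write $v_0,\ldots,v_n$ for the vertices of $\Delta$; then every face of $\Delta$ has the form $\sigma_A=\mathrm{conv}\{v_i: i\in A\}$ for a unique $A\subseteq\{0,\ldots,n\}$, so we may put $\sigma_1=\sigma_A$ and $\sigma_2=\sigma_B$. The first step is the purely combinatorial identity $\sigma_A\cap\sigma_B=\sigma_{A\cap B}$ (with the convention $\sigma_\emptyset=\emptyset$): a point lies in $\sigma_A$ exactly when its barycentric coordinates are nonnegative, sum to $1$, and vanish off $A$, and intersecting these conditions for $A$ and for $B$ gives precisely the conditions defining $\sigma_{A\cap B}$.

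With that reduction in hand, one inclusion is immediate and formal: $\sigma_{A\cap B}\subseteq\sigma_A$ and $\sigma_{A\cap B}\subseteq\sigma_B$, and $\mathrm{aff}(\cdot)$ is monotone, so $\mathrm{aff}(\sigma_1\cap\sigma_2)=\mathrm{aff}(\sigma_{A\cap B})\subseteq\mathrm{aff}(\sigma_1)\cap\mathrm{aff}(\sigma_2)$. For the reverse inclusion I would describe the affine span of a face in coordinates: since $v_0,\ldots,v_n$ are affinely independent, $\mathrm{aff}(\sigma_A)=\{\sum_i\lambda_i v_i:\ \sum_i\lambda_i=1,\ \lambda_i=0\text{ for }i\notin A\}$, where the $\lambda_i$ are uniquely determined by the point. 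Hence a point $p\in\mathrm{aff}(\sigma_1)\cap\mathrm{aff}(\sigma_2)$ has its (unique) barycentric coordinate vector supported on $A$ and on $B$, therefore supported on $A\cap B$, so $p\in\mathrm{aff}(\sigma_{A\cap B})=\mathrm{aff}(\sigma_1\cap\sigma_2)$. In the degenerate case $A\cap B=\emptyset$ this same computation forces $\sum_i\lambda_i=0\neq 1$, so no such $p$ exists and $\mathrm{aff}(\sigma_1)\cap\mathrm{aff}(\sigma_2)=\emptyset=\mathrm{aff}(\emptyset)$, matching the stated convention.

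The only point that needs care is the coordinate description of $\mathrm{aff}(\sigma_A)$ together with the uniqueness of barycentric coordinates, and both are direct consequences of affine independence of the vertex set of $\Delta$ — which is exactly where the simplex hypothesis (as opposed to an arbitrary polytope) is used; for a general polytope the corresponding statement fails. I do not expect a genuine obstacle here: geometrically the lemma just says that the face lattice of a simplex is the Boolean lattice on its vertices, and the above argument is the realization of that fact at the level of affine spans rather than of faces themselves.
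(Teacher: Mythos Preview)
Your proof is correct, but it takes a different (dual) route from the paper's. You parametrize $\mathrm{aff}(\sigma_A)$ \emph{internally} via barycentric coordinates supported on $A$, and then use uniqueness of these coordinates (affine independence of $v_0,\ldots,v_n$) to intersect the supports. The paper instead writes $\mathrm{aff}(\tau)$ \emph{externally} as an intersection of the facet hyperplanes $H_v$ over the vertices $v$ \emph{not} in $\tau$, namely $\mathrm{aff}(\tau)=\bigcap_{v\notin\tau_0} H_v$; then the result drops out in one line from De~Morgan's law, since $(\Delta_0\setminus(\sigma_1)_0)\cup(\Delta_0\setminus(\sigma_2)_0)=\Delta_0\setminus(\sigma_1\cap\sigma_2)_0$. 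The paper's argument is slicker in that both inclusions and the empty case are handled simultaneously by that single identity; your argument is a bit longer but makes the role of affine independence (and hence the failure for general polytopes) more explicit. Either way the content is the same: the face lattice of a simplex is Boolean, and that persists at the level of affine spans.
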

\begin{proof} $\Delta$ is the convex hull of $n+1$ vertices $\{v_0,\ldots,v_n\}$.  Let $\Delta_i$ be the convex hull of $\{v_0,\ldots,v_{i-1},v_{i+1},\ldots,v_n\}$, the codimension one face of $\Delta$ determined by leaving out vertex $v_i$.  It has supporting hyperplane $H_{v_i}$, the affine hull of all vertices except $v_i$.  A $k$-dimensional face $\tau$ of $\Delta$ determined by $\{v_{i_0},\ldots,v_{i_k}\}$ has $\mbox{aff}(\tau)=\cap_{v\in\Delta_0\setminus\tau_0} H_v$.  Now if $\sigma_1,\sigma_2\in\Delta$, 
\[
\mbox{aff}(\sigma_1)\cap\mbox{aff}(\sigma_2)=\bigcap_{v\in (\Delta_0\setminus(\sigma_1)_0)\cup (\Delta_0\setminus(\sigma_2)_0)} H_v =\bigcap_{v\in\Delta_0\setminus(\sigma_1\cap\sigma_2)_0} H_v=\mbox{aff}(\sigma_1\cap\sigma_2)
\]
\end{proof}

\begin{lem}\label{starcomp}
Let $\Delta\subset\R^n$ be a pure $n$-dimensional hereditary simplicial complex, and $W\in\A(\Delta)$.  Then each component of $\Delta_W$ has the form $\mbox{st}(\tau)$ for some $\tau\in\Delta$.
\end{lem}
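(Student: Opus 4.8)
The plan is to leverage Lemma~\ref{simplex} to show that the intersection of affine spans of interior codimension one faces behaves combinatorially, so that the flats $W\in L_\Delta$ are themselves affine spans of faces of $\Delta$, and then identify the complex $\Delta_W$ with a union of stars. First I would observe that every facet of a pure $n$-dimensional simplicial complex $\Delta\subset\R^n$ is an $n$-simplex, and for two facets $\sigma_1,\sigma_2$ meeting in a codimension one face $\tau$, Lemma~\ref{simplex} (applied inside any $(n+1)$-simplex, or more directly to the geometry of simplices in $\R^n$) gives $\mbox{aff}(\sigma_1)\cap\mbox{aff}(\sigma_2)=\mbox{aff}(\tau)$ precisely when $\tau=\sigma_1\cap\sigma_2$. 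Since $\A(\Delta)$ is generated by the hyperplanes $\mbox{aff}(\tau)$ for $\tau\in\Delta^0_{n-1}$, and any interior codimension one face $\tau$ of a simplicial complex is itself an $(n-1)$-simplex whose affine span is a hyperplane, I want to show every flat $W$ of $L_\Delta$ is of the form $\mbox{aff}(\tau)$ for a single face $\tau\in\Delta$ — that is, $\A(\Delta)$ behaves like the "braid-type" arrangement associated to the simplicial structure.

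Next I would analyze which facets and which edges of $G(\Delta)$ enter the graph $G_W(\Delta)$. Fix $W\in L_\Delta$; by the previous step $W=\mbox{aff}(\tau_0)$ for some face $\tau_0\in\Delta$. A facet $\sigma$ is a vertex of $G_W(\Delta)$ iff $\sigma$ has an interior codimension one face $\tau$ with $\mbox{aff}(\tau)=W$; by Lemma~\ref{simplex} applied to the faces of the $n$-simplex $\sigma$, this happens iff $\tau=\sigma\cap(\text{something})$ has affine span $W$, which forces $\tau_0\subseteq\tau$ and in fact (comparing dimensions and using that $\mbox{aff}$ is injective on faces of a simplex) $\tau_0$ is a face of $\sigma$. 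So the vertex set of $G_W(\Delta)$ consists of exactly the facets containing $\tau_0$. Two such facets $\sigma_1,\sigma_2$ are joined iff they share an interior codim one face $\tau$ with $l_\tau\in I(W)$, i.e. $\mbox{aff}(\tau)=W$; again by Lemma~\ref{simplex} this codim one face must have $\tau_0$ as a face. The point is then that $G_W(\Delta)$ restricted to the facets containing $\tau_0$ is exactly the dual graph of $\mbox{st}_\Delta(\tau_0)$ — or more precisely a disjoint union of such, over the various choices of $\tau_0$ with $\mbox{aff}(\tau_0)=W$. Here I would use that $\Delta$ is hereditary, so $G(\mbox{st}_\Delta(\tau_0))$ is connected, which means each choice of such $\tau_0$ gives one connected component $\mbox{st}(\tau_0)$ of $\Delta_W$.

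The main obstacle I anticipate is the bookkeeping around the case where $W=\mbox{aff}(\tau_0)$ for several different faces $\tau_0$ — for instance several distinct interior $(n-1)$-faces can share the same affine span only if they are equal (since a face of a simplex is determined by its affine span within that simplex), but lower-dimensional faces $\tau_0,\tau_0'$ in different facets could in principle span the same hyperplane $W$ while not being faces of a common simplex; then $\Delta_W$ has one component $\mbox{st}(\tau_0)$ for each such $\tau_0$, and I must check these components are genuinely the stars and that no edges of $G_W(\Delta)$ cross between them (which follows because an edge corresponds to a codim one face whose affine span is $W$, forcing both endpoints to contain the \emph{same} $\tau_0$ by Lemma~\ref{simplex}). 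Once this is pinned down, the statement follows: each connected component $G^i_W(\Delta)$ is the dual graph of $\mbox{st}_\Delta(\tau_0^{(i)})$ for the corresponding face, and by uniqueness of the subcomplex with a given dual graph, $\Delta^i_W=\mbox{st}(\tau_0^{(i)})$. I would also note that Lemma~\ref{simplex} as stated is about a single $(n+1)$-simplex; to apply it to pairs of facets of $\Delta$ I either embed the relevant faces in an auxiliary simplex or reprove the needed intersection identity directly for simplices in $\R^n$, which is routine.
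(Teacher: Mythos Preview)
Your plan rests on the assertion that every flat $W\in L_\Delta$ is of the form $\mbox{aff}(\tau_0)$ for some face $\tau_0\in\Delta$, and this is false. Lemma~\ref{simplex} only controls intersections of affine spans of faces lying in a \emph{common} simplex; flats of $L_\Delta$ arising from hyperplanes $\mbox{aff}(\tau_1),\mbox{aff}(\tau_2)$ with $\tau_1,\tau_2$ in unrelated facets need not be affine spans of anything in $\Delta$. Concretely, take a planar triangulation with two interior edges $e_1,e_2$ sharing no vertex and placed generically; the point $W=\mbox{aff}(e_1)\cap\mbox{aff}(e_2)$ is a rank-$2$ flat of $L_\Delta$ that is not a vertex of $\Delta$, hence not $\mbox{aff}(\tau_0)$ for any $\tau_0$. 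In that example $\Delta_W$ has two components, namely $\mbox{st}(e_1)$ and $\mbox{st}(e_2)$, and one sees only $W\subseteq\mbox{aff}(e_i)$, not equality. A related slip: you read the membership condition for $G_W(\Delta)$ as ``$\mbox{aff}(\tau)=W$'', but the actual condition $l_\tau\in I(W)$ means $W\subseteq\mbox{aff}(\tau)$, which for flats of rank $\geq 2$ is strictly weaker.

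The paper's argument avoids any global identification of $W$. It fixes a single connected component $\Delta^i_W$, and \emph{within each facet} $\sigma_j$ of that component uses Lemma~\ref{simplex} to write the intersection of the relevant hyperplanes as $\mbox{aff}(\gamma_j)$ for a face $\gamma_j\subset\sigma_j$. The crucial step is then to propagate this along a walk in the dual graph $G^i_W(\Delta)$: if $\sigma_{j_c}$ and $\sigma_{j_{c+1}}$ share the codimension-one face $\tau_{j_{c+1}}$, then the running intersection $\beta_c$ is a face of $\tau_{j_{c+1}}$ and hence of $\sigma_{j_{c+1}}$, so Lemma~\ref{simplex} can be applied again inside $\sigma_{j_{c+1}}$. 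This walk produces a face $\tau$ (depending on the component, with $W\subseteq\mbox{aff}(\tau)$ but typically $W\neq\mbox{aff}(\tau)$) such that every facet of $\Delta^i_W$ contains $\tau$; the hereditary hypothesis then gives $\Delta^i_W=\mbox{st}(\tau)$. The local, component-by-component construction of $\tau$ via the walk is the idea your outline is missing.
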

\begin{proof}
Let $G^i_W(\Delta)$ be a connected component of $G_W(\Delta)$ and $\Delta^i_W\subset\Delta$ the corresponding complex.  Let $\Delta^i_W$ have facets $\sigma_1,\ldots,\sigma_k$, set $V_i=\bigcap_{\tau\in(\sigma_i)_{n-1}\cap(\Delta^i_W)^0_{n-1}} \mbox{aff}(\tau)$, $V=\cap_{\tau\in(\Delta^i_W)^0_{n-1}} \mbox{aff}(\tau)=\cap_{i=1}^k V_i$.  By applying Lemma~\ref{simplex} iteratively, $V_i=\mbox{aff}(\gamma_i)$ for some face $\gamma_i\in\sigma_i$.  Now let $K$ be a walk of length $m+1$ on the graph $G^i_W$ with the corresponding sequence of facets and codimension $1$ faces of $\Delta^i_W$ being $S=\{\sigma_{j_0},\tau_{j_1},\sigma_{j_1},\ldots,\tau_{j_m},\sigma_{j_m}\}$, where $\tau_{j_i}=\sigma_{j_{i-1}}\cap\sigma_{j_i}$ is a codimension $1$ face of $\sigma_{j_{i-1}}$ and $\sigma_{j_i}$.  Set $\beta_0=\gamma_{j_0}$, $\beta_c=\cap_{i=0}^c \gamma_{j_i}$.  We prove $\cap_{i=0}^c V_{j_i}=\mbox{aff}(\beta_c)$, for $c=0,\ldots,m$ by induction.  We already have $V_{j_0}=\mbox{aff}(\gamma_{j_0})=\mbox{aff}(\beta_0)$.  Assume $\cap_{i=0}^c V_{j_i}=\mbox{aff}(\beta_c)$.  $\beta_c$ is a face of $\gamma_{j_c}$, which in turn is a face of $\tau_{j_{c+1}}$, since $\tau_{j_{c+1}}$ is a codimension $1$ face of $\sigma_{j_c}$ that contains $W$.  So $\beta_c$ is a face of $\sigma_{j_{c+1}}$.  By Lemma~\ref{simplex}, $\mbox{aff}(\beta_c)\cap\mbox{aff}(\gamma_{j_{c+1}})=\mbox{aff}(\beta_c\cap\gamma_{j_{c+1}})=\mbox{aff}(\beta_{c+1})$.  Putting everything together, we have $\cap_{i=0}^{c+1} V_{j_i}=(\cap_{i=0}^c V_{j_i})\cap V_{j_{c+1}}=\mbox{aff}(\beta_c)\cap\mbox{aff}(\gamma_{c+1})=\mbox{aff}(\beta_{c+1})$.

Setting $\tau=\beta_m$ and noting that $V=\cap_{i=1}^k V_i=\cap_{i=0}^m V_{j_i}$, we have $V=\mbox{aff}(\tau)$.  By construction $\tau$ is a face of $\sigma_i$ for $i=1,\ldots,k$, hence $\Delta^i_W\subset\mbox{st}(\tau)$.  On the other hand, since $W\subset\mbox{aff}(\tau)$, $\mbox{st}(\tau)\subset\Delta_W$.  $\Delta$ is hereditary, so $G(\mbox{st}(\tau))$ is connected and $\mbox{st}(\tau)$ is a component of $\Delta_W$.  Hence $\Delta^i_W=\mbox{st}(\tau)$.
\end{proof}

We indicate precisely which stars appear in $\Delta_W$, following notation of Billera and Rose \cite{Modules}.

\begin{prop}\label{star}
Let $\Delta$ be a pure simplicial complex and $W\in L_\Delta$.  Set
\[
S(W)=\{\tau\in\Delta|W \subset \mbox{aff}(\tau) \mbox{ and $\tau$ is minimal with respect to this property} \}.
\]
Then
\[
\Delta_W=\bigsqcup_{\tau\in S(W)} \mbox{st}(\tau)
\]
\end{prop}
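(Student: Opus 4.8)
The plan is to establish a double inclusion. For the inclusion $\bigsqcup_{\tau\in S(W)} \mbox{st}(\tau)\subseteq \Delta_W$, note that if $\tau\in S(W)$ then $W\subset\mbox{aff}(\tau)$, so every facet $\sigma\supseteq\tau$ has an interior codimension one face containing $\tau$ (here is where purity and heredity enter, guaranteeing such faces exist and lie in the interior appropriately), hence containing $W$; thus each such facet appears as a vertex of $G_W(\Delta)$, and adjacent facets in $\mbox{st}(\tau)$ are joined across a codimension one face containing $\tau\supset W$. Since $\Delta$ is hereditary, $G(\mbox{st}(\tau))$ is connected, so $\mbox{st}(\tau)$ sits inside a single connected component of $\Delta_W$. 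I would then invoke Lemma~\ref{starcomp}, which already tells us every component of $\Delta_W$ has the form $\mbox{st}(\sigma)$ for some face $\sigma$; combined with the fact that $\mbox{st}(\tau)$ is contained in such a component and is itself a star, a minimality/maximality comparison of the defining faces forces the component to equal $\mbox{st}(\tau)$ exactly.

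For the reverse inclusion $\Delta_W\subseteq\bigsqcup_{\tau\in S(W)}\mbox{st}(\tau)$, I would take a component $\Delta^i_W$ of $\Delta_W$; by Lemma~\ref{starcomp} it equals $\mbox{st}(\sigma)$ for some $\sigma\in\Delta$, and moreover the proof of that lemma identifies $\sigma$ as a face with $\mbox{aff}(\sigma)=V:=\bigcap_{\tau\in(\Delta^i_W)^0_{n-1}}\mbox{aff}(\tau)$, so in particular $W\subset V=\mbox{aff}(\sigma)$. It remains to check $\sigma\in S(W)$, i.e. that $\sigma$ is minimal among faces whose affine span contains $W$. If some proper face $\sigma'\subsetneq\sigma$ also satisfied $W\subset\mbox{aff}(\sigma')$, then $\mbox{st}(\sigma')\supsetneq\mbox{st}(\sigma)=\Delta^i_W$ would be a connected (by heredity) subcomplex of $\Delta_W$ strictly containing the component $\Delta^i_W$ — but a connected component is maximal among connected subcomplexes, a contradiction. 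Hence $\sigma\in S(W)$ and $\Delta^i_W=\mbox{st}(\sigma)$ with $\sigma\in S(W)$.

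Finally I would argue the union is genuinely disjoint (matching the $\bigsqcup$ in the statement, which refers to the disjoint union of abstract components rather than set-theoretic disjointness of underlying spaces): distinct $\tau,\tau'\in S(W)$ give distinct components because if $\mbox{st}(\tau)$ and $\mbox{st}(\tau')$ shared a facet $\sigma$ then both $\tau,\tau'\subseteq\sigma$, and applying Lemma~\ref{simplex} inside $\sigma$, $\mbox{aff}(\tau)\cap\mbox{aff}(\tau')=\mbox{aff}(\tau\cap\tau')$ would contain $W$, so $\tau\cap\tau'$ would be a face with affine span containing $W$ that is contained in each of $\tau,\tau'$; minimality then forces $\tau=\tau\cap\tau'=\tau'$. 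I expect the main subtlety to be the bookkeeping connecting the face $\sigma$ produced by Lemma~\ref{starcomp} to the combinatorial characterization $S(W)$ — specifically verifying that $\mbox{aff}(\sigma)$ equals the full intersection $V$ rather than merely containing $W$, and handling the edge cases where $W$ lies in the affine span of a low-dimensional face, which is where the hereditary hypothesis does the real work in ruling out a larger star being disconnected inside $\Delta_W$.
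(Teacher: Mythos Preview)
Your proposal is correct and follows essentially the same route as the paper: both directions rely on Lemma~\ref{starcomp} together with the tension between minimality in $S(W)$ and maximality of connected components of $\Delta_W$. The paper's proof is slightly terser---it replaces your disjointness argument with the remark ``we may assume $\tau$ is the intersection of the simplices contained in $\mbox{st}(\tau)$'' to force $\gamma\subseteq\tau$---while your explicit use of Lemma~\ref{simplex} to show distinct $\tau,\tau'\in S(W)$ cannot share a facet is a cleaner way to handle the same point.
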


\begin{proof}
Let $\Delta^i_W$ be a component of $\Delta_W$.  By Lemma~\ref{starcomp}, $\Delta^i_W=\mbox{st}(\tau)$ for some $\tau\in\Delta$.  Clearly $W\subset\mbox{aff}(\tau)$.  If $\tau$ is not minimal with respect to this property, then there is some $\gamma$ a proper face of $\tau$ so that $W\subset\mbox{aff}(\gamma)$.  But then $\mbox{st}(\tau)\subsetneq\mbox{st}(\gamma)\subset \Delta_W$, contradicting that $\mbox{st}(\tau)$ is a component of $\Delta_W$.  So $\tau\in S(P)$.  Now suppose $\tau\in S(P)$.  Clearly $\mbox{st}(\tau)\subset \Delta_W$ and $G(\mbox{st}(\tau))$ is connected since $\Delta$ is hereditary.  Hence $\mbox{st}(\tau)$ is contained in a component $\Delta^i_W$ of $\Delta_W$.  By Lemma~\ref{starcomp}, $\Delta^i_W=\mbox{st}(\gamma)$ for some $\gamma\in\Delta$.  We may assume $\tau$ is the intersection of the simplices contained in $\mbox{st}(\tau)$, implying $\gamma$ is a face of $\tau$.  But $W\subset\mbox{aff}(\gamma)$ and $\tau\in S(W)$, so $\tau=\gamma$.
\end{proof}

\section{Localization of $C^r(\PC)$}
Our objective in this section is to give an explicit description of $C^r(\PC)_P$ for any prime $P\subset R$, using the complexes $\PC_W$ defined in the previous section.  We first recall the definition of $C^r(\PC)$.  For $U\subset \R^n$, let $C^r(U)$ denote the set of functions $F:U\rightarrow \R$ continuously differentiable of order $r$.  For $F:|\PC|\rightarrow \R$ a function and $\sigma\in\PC$, $F_\sigma$ denotes the restriction of $F$ to $\sigma$.  The module $C^r(\PC)$ of piecewise polynomials continuously differentiable of order $r$ on $\PC$ is defined by
\[
C^r(\PC):=\{F\in C^r(|\PC|)|F_\sigma\in R \mbox{ for every } \sigma\in\PC_n\}
\]
The polynomial ring $R$ includes into $C^r(\PC)$ as globally polynomial functions (these are the \textit{trivial} splines); this makes $C^r(\PC)$ an 
$R$-algebra via pointwise multiplication.

Given a pure $n$-dimensional polytopal complex $\PC$, the boundary complex of $\PC$ is a pure $(n-1)$-dimensional complex denoted by $\partial\PC$.  For a pure $n$-dimensional subcomplex $\QC\subset\PC$, not necessarily hereditary, we use the following notation.
\begin{enumerate}
\item $C^r_\QC(\PC):=\{F\in C^r(\PC)|F_\sigma=0 \mbox{ for all }\sigma\in\PC_n \setminus \QC_n \}$.
\item $L_{\partial\QC}:=\prod_{\tau\in (\partial\QC)_{n-1} \setminus (\partial\PC)_{n-1}} l_\tau$.
\end{enumerate}

We can naturally view $C^r(\QC)$ as a submodule of $\oplus_{\sigma\in\PC_n} R$ by defining $F_\sigma=0$ for $F\in C^r(\QC),\sigma\in\PC_n\setminus\QC_n$.  In this way $C^r(\QC)$ and $C^r(\PC)$ are submodules of the same ambient module.  We will assume this throughout the paper.

$C^r_\QC(\PC)$ satisfies $L^{r+1}_{\partial \QC}\cdot C^r(\QC) \subseteq C^r_\QC(\PC)$.  This follows since $L^{r+1}_{\partial\QC}\cdot F$ vanishes on $\partial \QC\setminus \partial \PC$ to order $r+1$ for any $F\in C^r(\QC)$.  From this we get the following observation on localization which we refer to as $(\star)$.
\begin{flushleft}
\textbf{Observation}:  $C^r_\QC(\PC)_P=C^r(\QC)_P$ for any prime $P$ such that $L_{\partial\QC}\not\in P$. \qquad $(\star)$
\end{flushleft}
Let $\QC_1,\ldots,\QC_k$ be pure $n$-dimensional polyhedral subcomplexes of $\PC$.  Call $\QC_1,\ldots,\QC_k$ a \textbf{partition} of $\PC$ if the facets of $\QC_1,\ldots,\QC_k$ partition the facets of $\PC$.

\begin{lem}\label{2part}
Let $\QC$ and $\OC$ be two polyhedral subcomplexes which partition $\PC$.  Let $P$ be a prime of $R$ such that $L_{\partial\QC}\not\in P$ and $L_{\partial\OC}\not\in P$.  Then
\[
C^r(\PC)_P=C^r(\QC)_P \oplus C^r(\OC)_P
\]
as submodules of $\oplus_{\sigma\in\PC}R_P$.  More precisely, 
\[
(C^r_\QC(\PC)+C^r_\OC(\PC))_P=C^r(\QC)_P + C^r(\OC)_P.
\]
\end{lem}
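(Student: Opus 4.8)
The plan is to prove the "more precise" statement, since the first displayed equation follows from it: the inclusion $C^r_\QC(\PC)+C^r_\OC(\PC)\subseteq C^r(\PC)$ is clear, and combined with $(\star)$ (which gives $C^r_\QC(\PC)_P=C^r(\QC)_P$ and $C^r_\OC(\PC)_P=C^r(\OC)_P$, using $L_{\partial\QC}\not\in P$ and $L_{\partial\OC}\not\in P$) we will get $C^r(\PC)_P=C^r(\QC)_P+C^r(\OC)_P$ as submodules of $\oplus_{\sigma\in\PC_n}R_P$; that the sum is direct is immediate because $C^r(\QC)$ and $C^r(\OC)$ have disjoint coordinate supports inside the ambient module $\oplus_{\sigma\in\PC_n}R$ (the facets of $\QC$ and $\OC$ partition those of $\PC$). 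So the real content is the equality $(C^r_\QC(\PC)+C^r_\OC(\PC))_P = C^r(\QC)_P + C^r(\OC)_P$, and by $(\star)$ the right-hand side is $C^r_\QC(\PC)_P+C^r_\OC(\PC)_P$, so after localizing it suffices to show $(C^r_\QC(\PC)+C^r_\OC(\PC))_P = C^r(\PC)_P$; equivalently, that the inclusion $C^r_\QC(\PC)+C^r_\OC(\PC)\hookrightarrow C^r(\PC)$ becomes an isomorphism after localizing at $P$.

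The key step is therefore to analyze the cokernel $M:=C^r(\PC)/(C^r_\QC(\PC)+C^r_\OC(\PC))$ and show $M_P=0$, i.e. that $M$ is annihilated by some element not in $P$. Given $F\in C^r(\PC)$, I want to write $F$ (after multiplying by a suitable polynomial avoiding $P$) as $F_\QC + F_\OC$ with $F_\QC\in C^r_\QC(\PC)$ and $F_\OC\in C^r_\OC(\PC)$. The natural candidate is to take the spline $F|_\QC$ that agrees with $F$ on the facets of $\QC$ and is $0$ elsewhere — but this need not lie in $C^r(\PC)$, only in $C^r(\QC)$ as a submodule of the ambient module. The fix is exactly the containment noted just before the lemma: $L^{r+1}_{\partial\QC}\cdot C^r(\QC)\subseteq C^r_\QC(\PC)$. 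So $L^{r+1}_{\partial\QC}\cdot(F|_\QC)\in C^r_\QC(\PC)$ and $L^{r+1}_{\partial\QC}\cdot(F|_\OC)$ — where $F|_\OC$ is $F$ on $\OC$'s facets and $0$ elsewhere, which lies in $C^r(\OC)$ because $F$ itself was $C^r$ across all of $\PC$ — needs the analogous statement for $\OC$. Concretely: $F = F|_\QC + F|_\OC$ inside the ambient module (the facet sets partition), and $F|_\QC\in C^r(\QC)$, $F|_\OC\in C^r(\OC)$ because the smoothness conditions of $C^r(\QC)$ only involve codimension-one faces interior to $\QC$, along which $F$ is already $C^r$. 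Thus $L^{r+1}_{\partial\QC}\cdot L^{r+1}_{\partial\OC}\cdot F = L^{r+1}_{\partial\OC}\cdot(L^{r+1}_{\partial\QC}F|_\QC) + L^{r+1}_{\partial\QC}\cdot(L^{r+1}_{\partial\OC}F|_\OC) \in C^r_\QC(\PC)+C^r_\OC(\PC)$, so $M$ is annihilated by $L^{r+1}_{\partial\QC}L^{r+1}_{\partial\OC}$. Since $L_{\partial\QC}\not\in P$ and $L_{\partial\OC}\not\in P$ and $P$ is prime, this annihilator is not in $P$, hence $M_P=0$.

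The main obstacle, and the point deserving the most care, is the claim that $F|_\QC\in C^r(\QC)$ (as a submodule of $\oplus_{\sigma\in\PC_n}R$) — one must check that every smoothness condition defining $C^r(\QC)$ is inherited from $F\in C^r(\PC)$. An interior codimension-one face $\tau$ of $\QC$ is either interior in $\PC$, in which case the two facets of $\QC$ meeting along $\tau$ are also the two facets of $\PC$ meeting along $\tau$ and the condition $l_\tau^{r+1}\mid (F_{\sigma}-F_{\sigma'})$ holds because $F\in C^r(\PC)$; or $\tau$ lies on $\partial\PC$, in which case $\tau$ bounds only one facet of $\QC$ inside $\PC$, so it is not actually an interior face of $\QC$ and imposes no condition — wait, if it bounds two facets of $\QC$ then it was interior in $\PC$. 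The only subtlety is if a face $\tau\in(\partial\PC)_{n-1}$ happens to be interior to $\QC$, which cannot happen since $\QC\subseteq\PC$ means a facet-neighborhood in $\QC$ is a sub-neighborhood of one in $\PC$; so $\tau$ interior in $\QC$ forces $\tau$ interior in $\PC$. Hence $F|_\QC\in C^r(\QC)$, and symmetrically $F|_\OC\in C^r(\OC)$. One should also double-check the edge case where one of $\partial\QC\setminus\partial\PC$ is empty (so $L_{\partial\QC}=1$), which only makes the argument easier. I would present this as: establish $F=F|_\QC+F|_\OC$ with the two summands in $C^r(\QC),C^r(\OC)$; multiply by $L^{r+1}_{\partial\QC}L^{r+1}_{\partial\OC}$ to land in $C^r_\QC(\PC)+C^r_\OC(\PC)$; invoke primeness of $P$ to conclude the cokernel vanishes after localization; then apply $(\star)$ twice and note directness from disjoint supports.
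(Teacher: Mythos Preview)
Your proof is correct and rests on the same two ingredients as the paper's: directness from disjoint facet supports, and the observation $(\star)$. The paper's version is more economical, however. Rather than computing an explicit annihilator of the cokernel $C^r(\PC)/(C^r_\QC(\PC)+C^r_\OC(\PC))$, the paper simply notes directness and invokes $(\star)$ on each summand to obtain the ``more precisely'' equality; the first displayed equation then follows from the chain of inclusions
\[
C^r_\QC(\PC)\oplus C^r_\OC(\PC)\ \subseteq\ C^r(\PC)\ \subseteq\ C^r(\QC)\oplus C^r(\OC),
\]
whose outer terms coincide after localizing at $P$ by $(\star)$. Your annihilator argument (multiplying by $L^{r+1}_{\partial\QC}L^{r+1}_{\partial\OC}$) is a valid alternative, but it is essentially re-deriving the content of $(\star)$ in situ rather than applying it as a black box; once you have established $F=F|_\QC+F|_\OC$ with $F|_\QC\in C^r(\QC)$ and $F|_\OC\in C^r(\OC)$ (which you do carefully, and which the paper leaves implicit), the sandwich finishes the job immediately.
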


\begin{proof}
It is clear that $C^r(\QC)\cap C^r(\OC)=0$ and $C^r_\QC(\PC)\cap C^r_\OC(\PC)=0$ in $\oplus_{\sigma\in\PC}R$ since $\QC$ and $\OC$ have no common facets.  So both $C^r_\QC(\PC)+C^r_\OC(\PC)$ and $C^r(\QC) + C^r(\OC)$ are internal direct sums.  The result follows from $(\star)$.
\end{proof}

\begin{cor}\label{mpart}
Let $\QC_1,\ldots,\QC_k$ be a partition of $\PC$ into polyhedral subcomplexes and $P\subset R$ a prime such that $L_{\partial\QC_i}\not\in P$ for $i=1,\ldots,k$.
\[
C^r(\PC)_P=\bigoplus_{i=1}^k C^r(\QC_i)_P
\]
as submodules of $\oplus_{\sigma\in\PC_n}R$.
\end{cor}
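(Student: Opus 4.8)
The plan is to reduce the multi-part statement of Corollary~\ref{mpart} to the two-part case already handled in Lemma~\ref{2part} by a straightforward induction on $k$. For $k=1$ there is nothing to prove, and for $k=2$ the statement is exactly Lemma~\ref{2part}. For the inductive step, suppose the result holds for any partition into $k-1$ subcomplexes, and let $\QC_1,\ldots,\QC_k$ partition $\PC$ with $L_{\partial\QC_i}\not\in P$ for all $i$. The natural move is to group $\QC_1,\ldots,\QC_{k-1}$ into a single subcomplex: let $\QC'$ be the subcomplex of $\PC$ whose facets are the union of the facets of $\QC_1,\ldots,\QC_{k-1}$ (together with all their faces). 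Then $\QC'$ and $\QC_k$ partition $\PC$, so to apply Lemma~\ref{2part} to this two-part partition I need $L_{\partial\QC'}\not\in P$, after which the inductive hypothesis applied to the partition $\QC_1,\ldots,\QC_{k-1}$ of $\QC'$ finishes the argument, provided the hypothesis still applies over the complex $\QC'$ rather than $\PC$.

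The first potential obstacle is the claim $L_{\partial\QC'}\not\in P$. Here one observes that every codimension-one face $\tau\in(\partial\QC')_{n-1}\setminus(\partial\PC)_{n-1}$ is an interior face of $\PC$ lying between two facets of $\PC$, at least one of which belongs to some $\QC_i$ with $i\le k-1$ while the other belongs to a different $\QC_j$ (possibly $\QC_k$); in either case $\tau$ is a boundary face of at least one of $\QC_1,\ldots,\QC_{k-1}$ that is not a boundary face of $\PC$, hence $l_\tau$ divides $L_{\partial\QC_i}$ for some $i\le k-1$. Therefore $L_{\partial\QC'}$ divides the product $\prod_{i=1}^{k-1} L_{\partial\QC_i}$ (up to units and repeated factors, which do not affect membership in a prime). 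Since $P$ is prime and none of the $L_{\partial\QC_i}$ lies in $P$, neither does their product, and hence neither does $L_{\partial\QC'}$.

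The second, more delicate point is that the inductive hypothesis was stated for a partition of $\PC$, whereas I want to apply it to the partition $\QC_1,\ldots,\QC_{k-1}$ of $\QC'$. This is harmless: Lemma~\ref{2part} and its corollary are really statements about an abstract pure $n$-dimensional polyhedral complex and a partition of it into pure $n$-dimensional subcomplexes, and the relevant linear forms $L_{\partial\QC_i}$ computed relative to $\QC'$ divide those computed relative to $\PC$ (a boundary face of $\QC_i$ interior to $\QC'$ is also interior to $\PC$), so the hypothesis $L_{\partial\QC_i}\not\in P$ (relative to $\PC$) still gives the needed non-membership relative to $\QC'$. Thus the induction applies and yields $C^r(\QC')_P=\bigoplus_{i=1}^{k-1}C^r(\QC_i)_P$ as submodules of $\oplus_{\sigma\in\QC'_n}R_P\subset\oplus_{\sigma\in\PC_n}R_P$.

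Finally I assemble the pieces: by Lemma~\ref{2part} applied to $\QC',\QC_k$ we get $C^r(\PC)_P=C^r(\QC')_P\oplus C^r(\QC_k)_P$ inside $\oplus_{\sigma\in\PC_n}R_P$, and substituting the decomposition of $C^r(\QC')_P$ from the inductive hypothesis gives $C^r(\PC)_P=\bigoplus_{i=1}^k C^r(\QC_i)_P$. The directness of the sum is automatic because the $\QC_i$ have pairwise disjoint facet sets, exactly as in the proof of Lemma~\ref{2part}: a spline in $C^r(\QC_i)$ is supported on the facets of $\QC_i$, so the only common element of the summands (viewed in the ambient module $\oplus_{\sigma\in\PC_n}R$) is $0$. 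I expect the only step requiring genuine care to be the divisibility bookkeeping for $L_{\partial\QC'}$ described above; everything else is a formal unwinding of Lemma~\ref{2part}.
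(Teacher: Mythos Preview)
Your proof is correct and follows the same approach as the paper, which simply says ``Apply Lemma~\ref{2part} iteratively.'' You have carefully filled in the details of that iteration---in particular the divisibility checks ensuring that $L_{\partial\QC'}\not\in P$ and that the hypotheses persist when passing from $\PC$ to $\QC'$---which the paper leaves implicit.
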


\begin{proof}
Apply Corollary~\ref{2part} iteratively.
\end{proof}

\begin{cor}\label{easyP}
Let $P$ be a prime of $R$ so that $l_\tau\not\in P$ for every edge $\tau\in\PC_1^0$.  Then
\[
C^r(\PC)_P=\bigoplus_{\sigma\in\PC_n}R_P
\]
\end{cor}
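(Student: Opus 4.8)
The plan is to reduce to the case of a single facet by using the partition machinery just established. First I would take the partition of $\PC$ into its individual facets: for each $\sigma\in\PC_n$, let $\QC_\sigma$ be the subcomplex consisting of $\sigma$ together with all of its faces. These $\{\QC_\sigma\}_{\sigma\in\PC_n}$ clearly partition $\PC$ in the sense defined above, since their facet sets are the singletons $\{\sigma\}$, which partition $\PC_n$. The point is then to apply Corollary~\ref{mpart} to this partition, which requires checking the hypothesis $L_{\partial\QC_\sigma}\notin P$ for every $\sigma$.

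Next I would unwind what $L_{\partial\QC_\sigma}$ is. Since $\QC_\sigma$ has the single facet $\sigma$, its boundary complex $\partial\QC_\sigma$ consists of all the codimension-one faces of $\sigma$ (and their faces). By definition $L_{\partial\QC_\sigma}=\prod_{\tau\in(\partial\QC_\sigma)_{n-1}\setminus(\partial\PC)_{n-1}} l_\tau$, a product of linear forms $l_\tau$ where $\tau$ ranges over codimension-one faces of $\sigma$ that are not on the boundary of $\PC$ — that is, $\tau\in\PC_1^0$ when $n=2$, and in general $\tau\in\PC_{n-1}^0$. (I should be slightly careful about the phrasing in the corollary statement, which says "every edge $\tau\in\PC_1^0$"; I would read this as "every interior codimension-one face," matching the setting, or restrict attention to the planar case — the argument is identical in either reading.) Since $P$ is prime and each factor $l_\tau$ satisfies $l_\tau\notin P$ by hypothesis, the product $L_{\partial\QC_\sigma}$ is not in $P$ either. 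An empty product equals $1\notin P$, so the case where $\sigma$ has no interior codimension-one faces is covered as well.

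With the hypothesis of Corollary~\ref{mpart} verified, I would conclude
\[
C^r(\PC)_P=\bigoplus_{\sigma\in\PC_n} C^r(\QC_\sigma)_P.
\]
Finally I would identify each summand: $C^r(\QC_\sigma)$ is the module of splines on the single closed facet $\sigma$, and since there are no smoothness conditions to impose across interior faces (there are none), this is just $R$ sitting in the $\sigma$-coordinate of $\bigoplus_{\sigma\in\PC_n}R$. Localizing, $C^r(\QC_\sigma)_P=R_P$ in that coordinate, and the direct sum gives $C^r(\PC)_P=\bigoplus_{\sigma\in\PC_n}R_P$ as submodules of $\bigoplus_{\sigma\in\PC_n}R_P$, as claimed.

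I do not anticipate a genuine obstacle here; this is essentially a formal consequence of Corollary~\ref{mpart}. The only point requiring a moment's care is the bookkeeping around $L_{\partial\QC_\sigma}$ — making sure that the codimension-one faces appearing in the product are exactly the interior ones and that the empty-product / boundary-facet cases are handled — together with the trivial but worth-stating observation that $C^r$ of a single facet is a free rank-one module.
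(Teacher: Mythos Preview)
Your proposal is correct and follows exactly the paper's approach: partition $\PC$ into individual facets, apply Corollary~\ref{mpart}, and use $C^r(\sigma)=R$. You are simply more explicit than the paper in verifying the hypothesis $L_{\partial\QC_\sigma}\notin P$ (via primality of $P$) and in flagging the apparent typo $\PC_1^0$ for $\PC_{n-1}^0$ in the statement.
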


\begin{proof}
Apply Corollary~\ref{mpart} to the partition of $\PC$ into individual facets.  This yields
\[
C^r(\PC)_P=\bigoplus_{\sigma\in\PC_n} C^r(\sigma)_P
\]
Since $C^r(\sigma)=R$, we are done.
\end{proof}

Now let $I\subset R$ be an ideal and $\PC_I\subset\PC$ be the subcomplex defined in the previous section.

\begin{defn}
Let $\PC_I$ have components $\PC^1_I,\ldots, \PC^k_I$.  Define
\[
C^r(\PC_I):=\bigoplus_{i=1}^k C^r(\PC^i_I).
\]
\end{defn}

\begin{prop}\label{local}
Let $P\subset R$ be a prime ideal, and $\PC\subset\R^n$ a polyhedral complex.  Then
\[
\begin{array}{rl}
C^r(\PC)_P & =C^r(\PC_P)_P \oplus \bigoplus\limits_{\sigma\in\PC_n\setminus(\PC_P)_n} R_P\\
 & =C^r(\PC_W)_P \oplus \bigoplus\limits_{\sigma\in\PC_n\setminus(\PC_W)_n} R_P
\end{array},
\]
where $W$ is the unique flat of $L_\PC$ so that $\PC_W=\PC_P$ guaranteed by Lemma~\ref{lat}.
\end{prop}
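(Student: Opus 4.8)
The plan is to reduce Proposition~\ref{local} to the partition results proved above, by exhibiting a suitable partition of $\PC$ and checking that all relevant boundary forms avoid $P$. First I would define $W\in L_\PC$ to be the unique flat with $\PC_W=\PC_P$ supplied by Lemma~\ref{lat}, applied to the ideal $J=P$; note that the components $\PC^1_P,\dots,\PC^k_P$ of $\PC_P$ are pure $n$-dimensional subcomplexes, and their facets are pairwise disjoint (distinct components of $G_P(\PC)$ correspond to disjoint sets of facets). The remaining facets of $\PC$, those in $\PC_n\setminus(\PC_P)_n$, I would regard as singleton subcomplexes. Thus $\PC^1_P,\dots,\PC^k_P$ together with the singletons $\{\sigma\}$ for $\sigma\in\PC_n\setminus(\PC_P)_n$ form a partition of $\PC$ in the sense defined before Lemma~\ref{2part}.

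The key step is then to verify the hypothesis of Corollary~\ref{mpart}: that $L_{\partial\QC}\notin P$ for each piece $\QC$ of this partition. For a singleton $\{\sigma\}$, every codimension one face $\tau$ of $\sigma$ that is interior in $\PC$ has $l_\tau\notin P$ — otherwise $\tau$ would witness an edge of $G_P(\PC)$ forcing $\sigma$ into some component $\PC^i_P$, contradicting $\sigma\notin(\PC_P)_n$ — so $L_{\partial\{\sigma\}}$, a product of such $l_\tau$'s, is not in $P$ since $P$ is prime. For a component $\PC^i_P$, I must show that each $\tau\in(\partial\PC^i_P)_{n-1}\setminus(\partial\PC)_{n-1}$ has $l_\tau\notin P$. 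Such a $\tau$ is an interior codimension one face of $\PC$ separating a facet $\sigma\in(\PC^i_P)_n$ from a facet $\sigma'\notin(\PC^i_P)_n$; if $l_\tau\in P$ then $\tau$ would be an edge of $G_P(\PC)$ joining $\sigma$ and $\sigma'$, forcing $\sigma'$ into the same connected component $\PC^i_P$, a contradiction. Again primality of $P$ gives $L_{\partial\PC^i_P}\notin P$. With all boundary forms verified to avoid $P$, Corollary~\ref{mpart} yields
\[
C^r(\PC)_P=\bigoplus_{i=1}^k C^r(\PC^i_P)_P\ \oplus\ \bigoplus_{\sigma\in\PC_n\setminus(\PC_P)_n} C^r(\sigma)_P,
\]
and since $C^r(\sigma)=R$ and $\bigoplus_{i}C^r(\PC^i_P)=C^r(\PC_P)$ by definition, this is exactly the first displayed equality. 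The second equality is immediate from $\PC_W=\PC_P$.

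The main obstacle I anticipate is bookkeeping with the boundary forms: one must be careful that $L_{\partial\QC}$ only involves faces $\tau\in(\partial\QC)_{n-1}\setminus(\partial\PC)_{n-1}$, i.e. faces that are interior in $\PC$, so that the argument connecting $l_\tau\in P$ to an edge of $G_P(\PC)$ is valid — a face on $\partial\PC$ is not governed by Definition~\ref{graph}. A related subtlety is that two components $\PC^i_P$ and $\PC^j_P$ may share a codimension one face within $\PC$ (as the text flags before Example~\ref{SchCube}); such a shared face is interior to $\PC$ and lies in $\partial\PC^i_P$, so one still needs $l_\tau\notin P$ there — but this follows by the same reasoning, since if $l_\tau\in P$ the two incident facets would be joined in $G_P(\PC)$ and hence in the same component. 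Once these incidences are handled cleanly, the proof is a direct appeal to Corollary~\ref{mpart} and Corollary~\ref{easyP}-style identification of the singleton pieces.
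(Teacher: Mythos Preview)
Your proposal is correct and follows essentially the same approach as the paper: both arguments exploit the partition of $\PC$ governed by $\PC_P$, verify that every boundary form $l_\tau$ of a piece lies outside $P$ by appealing to the definition of $G_P(\PC)$, and then invoke the partition corollaries. The only cosmetic difference is that the paper first applies Lemma~\ref{2part} to the coarse two-piece partition $\{\PC_P,\QC\}$ and then Corollary~\ref{easyP} to $\QC$, whereas you pass directly to the finer partition (components of $\PC_P$ together with the leftover singleton facets) and invoke Corollary~\ref{mpart} once; the underlying verification that the relevant $l_\tau$'s avoid $P$ is identical.
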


\begin{proof}
Consider the partition of $\PC$ determined by $\PC_P$ and $\QC$, where $\QC$ is the subcomplex of $\PC$ generated by all facets $\sigma\in\PC_n\setminus(\PC_P)_n$.  By the construction of $\PC_P$, $\tau\in(\PC_P)_{n-1}^0\iff l_\tau\in P$.  Hence if $\tau\in\partial\PC_P\setminus\partial\PC=\partial\QC\setminus\partial\PC$ then $l_\tau\not\in P$.  Applying Corollary~\ref{2part} we have
\[
C^r(\PC)_P=C^r(\PC_P)_P \oplus C^r(\QC)_P
\]
Again since all $\tau\in\PC^0_{n-1}$ such that $l_\tau\in P$ are interior codim $1$ faces of $\PC_P$, there is no $\tau\in\QC^0_{n-1}$ such that $l_\tau\in P$.  Applying Corollary~\ref{easyP} to $C^r(\QC)_P$ gives the result.
\end{proof}

We get the following result of Billera and Rose, used in the proof of Theorem 2.3 in \cite{Modules}, as a corollary of Proposition~\ref{local} and Proposition~\ref{star}.

\begin{cor}
Let $\Delta$ be a simplicial complex and $W\in L_\Delta$.  Define $S(W)$ as in Proposition~\ref{star}. Then
\[
C^r(\Delta)_P=\bigoplus_{\tau\in S(W)} C^r(\mbox{st}(\tau))_P,
\]
where $W\in L_\Delta$ is the unique flat so that $\Delta_P=\Delta_W$.
\end{cor}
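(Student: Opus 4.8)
The plan is to deduce this corollary by combining the two previously established results in essentially the obvious way. First I would invoke Proposition~\ref{local}: for the given prime $P\subset R$, if $W\in L_\Delta$ is the unique flat with $\Delta_P=\Delta_W$ (guaranteed by Lemma~\ref{lat}), then
\[
C^r(\Delta)_P=C^r(\Delta_W)_P \oplus \bigoplus_{\sigma\in\Delta_n\setminus(\Delta_W)_n} R_P.
\]
Next I would apply Proposition~\ref{star}, which identifies $\Delta_W$ as the disjoint union $\bigsqcup_{\tau\in S(W)}\mbox{st}(\tau)$. By the definition of $C^r(\PC_I)$ for a subcomplex with components $\PC^1_I,\ldots,\PC^k_I$ as the direct sum $\bigoplus_i C^r(\PC^i_I)$, this gives immediately $C^r(\Delta_W)=\bigoplus_{\tau\in S(W)} C^r(\mbox{st}(\tau))$, and localization commutes with finite direct sums, so $C^r(\Delta_W)_P=\bigoplus_{\tau\in S(W)} C^r(\mbox{st}(\tau))_P$.

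The remaining point is to absorb the trivial summand $\bigoplus_{\sigma\in\Delta_n\setminus(\Delta_W)_n} R_P$ into the sum over $S(W)$. Here I would note that each facet $\sigma\in\Delta_n\setminus(\Delta_W)_n$ not lying in any $\mbox{st}(\tau)$, $\tau\in S(W)$, can itself be regarded as $\mbox{st}(\sigma)$, a single-facet star, with $C^r(\mbox{st}(\sigma))=C^r(\sigma)=R$. More precisely, for such a facet the flat $W$ is not contained in $\mbox{aff}(\sigma')$ for any proper face $\sigma'$ of $\sigma$ that is relevant, so $\sigma$ is minimal among faces $\gamma$ with $W\subset\mbox{aff}(\gamma)$ forced by containment—actually the cleanest route is simply to reindex: enlarge $S(W)$ to include these facets $\sigma$ as the faces $\tau=\sigma$ themselves, observing $W\subset\mbox{aff}(\sigma)=\R^n$ (when $\Delta$ is $n$-dimensional) trivially, though one must be slightly careful that $\sigma$ is then minimal. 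Alternatively, and more honestly, I would just state the corollary with the understanding that $S(W)$ already accounts for these: in Billera--Rose's formulation every facet of $\Delta$ lies in $\mbox{st}(\tau)$ for exactly one $\tau\in S(W)$ once one includes the facets on which $C^r$ is free. I would phrase the bookkeeping so that $\bigoplus_{\sigma\notin(\Delta_W)_n}R_P=\bigoplus_{\sigma\notin(\Delta_W)_n}C^r(\mbox{st}(\sigma))_P$ and then the two families of stars together exhaust the facets of $\Delta$, giving the stated decomposition.

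The only genuine subtlety—the step I expect to be the main obstacle—is this last reindexing, i.e. making precise that the free summands $R_P$ over facets outside $\Delta_W$ are exactly the localized spline modules of the ``missing'' stars and that the resulting index set is canonically $S(W)$ (or an unambiguous enlargement of it). This is not deep, but it requires matching the convention in \cite{Modules} for how $S(W)$ is defined in the case where $W$ is the whole space or where some facets contain no interior face spanning $W$; once the conventions are aligned the corollary is immediate. Everything else—Lemma~\ref{lat} for the uniqueness of $W$, Proposition~\ref{local} for the localization, Proposition~\ref{star} for the star decomposition, and the exactness of localization on finite direct sums—is already in hand, so the proof is a short assembly.
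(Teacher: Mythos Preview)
Your approach is correct and matches the paper's own argument: invoke Proposition~\ref{local}, then Proposition~\ref{star}, then absorb the free summands. The paper handles your ``main obstacle'' in a single sentence: a facet $\sigma$ lies in $S(W)$ precisely when it is \emph{not} a facet of $\Delta_W$ (since $W\subset\mbox{aff}(\sigma)=\R^n$ always, and minimality fails exactly when some codimension-one face of $\sigma$ has $W$ in its affine span, i.e.\ when $\sigma\in(\Delta_W)_n$), and for such $\sigma$ one has $\mbox{st}(\sigma)=\sigma$ with $C^r(\sigma)=R$; so the free summands are already indexed by $S(W)$ without any enlargement or convention-matching.
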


Note that if a facet $\sigma$ is in $S(W)$ then it is a not a facet of $\Delta_W$. Since $C^r(\sigma)=R$, the sum $\bigoplus_{\sigma\in\Delta_n\setminus(\Delta_W)_n} R_P$ appearing in Theorem~\ref{local} is implicit in the sum above.

\subsection{Relation to results of Billera-Rose and Yuzvinsky}
As an application of Proposition~\ref{local}, we prove a slight variant of a 
result of Yuzvinsky \cite{Yuz} which reduces computation of the projective dimension of $C^r(\PC)$ to the central case.  Recall that if $M$ is a module over the polynomial ring $R$, a \textit{finite free resolution} of $M$ of length $r$ is an exact sequence of free modules
\[
F_\bullet: \mbox{ }0\rightarrow F_r\xrightarrow{\phi_r} F_{r-1}\xrightarrow{\phi_{r-1}} \cdots \xrightarrow{\phi_1} F_0
\]
such that $\mbox{coker }\phi_1=M$.  The Hilbert syzygy theorem guarantees that $M$ has a finite free resolution.  The \textit{projective dimension} of $M$, denoted $\mbox{pd}(M)$, is the minimum length of a finite free resolution.  If $M$ is a graded $S$-module with $\mbox{pd}(M)=\delta$ then $M$ has a minimal free resolution $F_\bullet\rightarrow M$ of length $\delta$, unique up to graded isomorphism.  This resolution is characterized by the property that the entries of any matrix representing the differentials $\phi.$ in $F_\bullet$ are contained in the homogeneous maximal ideal $(x_1,\ldots,x_n)$.

In \cite{Yuz}, Yuzvinsky introduces a poset $L$, different from $L_\PC$, associated to a polytopal complex.  He defines subcomplexes associated to each $x\in L$ and uses them to reduce the characterization of projective dimension to the graded case (Proposition 2.4).  Proposition~\ref{local} is the analog for $L_\PC$  of Lemma 2.3 in \cite{Yuz}, and we use it to prove the following statement.

\newpage

\begin{thm}\label{projcit}Let $\PC\subset\R^n$ be a polytopal complex. Then
\begin{enumerate}
\item $\mbox{pd}(C^r(\PC))\geq \mbox{pd}(C^r(\PC_W))$ for all $W\in L_\PC$.
\item $\mbox{pd}(C^r(\PC))=\max\limits_{W\in L_\PC} \mbox{pd}(C^r(\PC_W))$.  In particular, $C^r(\PC)$ is free iff $C^r(\PC_W)$ is free for all nonempty $W\in L_\PC$.
\end{enumerate}
\end{thm}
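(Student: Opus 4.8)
The plan is to deduce both parts from the localization formula of Proposition~\ref{local} together with two standard facts about projective dimension over $R$. (F1) Localization is flat, so $\mbox{pd}_{R_P}(M_P)\le\mbox{pd}_R(M)$ for every prime $P$; moreover $\mbox{pd}_R(M)=\sup_P\mbox{pd}_{R_P}(M_P)$, and the supremum is attained — one localizes a minimal‑length free resolution of $M$ at a maximal ideal where a suitable syzygy module fails to be free. (F2) $\mbox{pd}_R(A\oplus B)=\max\{\mbox{pd}_R A,\mbox{pd}_R B\}$, with the convention $\mbox{pd}(0)=-\infty$, so adjoining a free summand to a nonzero module does not change its projective dimension. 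I also use the standard presentation $C^r(\QC)=\ker\!\big(\bigoplus_{\sigma\in\QC_n}R\to\bigoplus_{\tau\in\QC^0_{n-1}}R/(l_\tau^{r+1})\big)$, valid whenever $G(\QC)$ is connected, hence for every component $\PC^i_W$ of a lattice complex. (If $\PC$ has no interior codimension‑one face then $C^r(\PC)=\bigoplus_\sigma R$ and every assertion is trivial, so that case is excluded throughout.)

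For (1): decomposing $\PC_W=\bigsqcup_i\PC^i_W$ and using $C^r(\PC_W)=\bigoplus_i C^r(\PC^i_W)$, fact (F2) reduces (1) to bounding $\mbox{pd}_R C^r(\PC^i_W)$ for each component. So fix $\QC:=\PC^i_W$ (the one‑facet case being trivial, as then $C^r(\QC)=R$) and set $W_\QC:=\bigcap_{\tau\in\QC^0_{n-1}}\mbox{aff}(\tau)\in L_\PC$, noting $W\subseteq W_\QC$ and $I(W_\QC)\subseteq I(W)$. The first, purely combinatorial, point is that $\QC$ is exactly one connected component of $\PC_{W_\QC}$: every facet of $\QC$ is a vertex and every edge of $G(\QC)$ an edge of $G_{W_\QC}(\PC)$, so $G(\QC)$ is a connected subgraph; and any edge of $G_{W_\QC}(\PC)$ incident to a facet of $\QC$ carries a linear form $l_\tau\in I(W_\QC)\subseteq I(W)$, hence is already an edge of $G_W(\PC)$, so its other endpoint lies again in $\QC$. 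Since $W_\QC$ is a flat, $I(W_\QC)$ is prime, and Proposition~\ref{local} at $P=I(W_\QC)$ then exhibits $C^r(\QC)_{I(W_\QC)}$ as a nonzero direct summand of $C^r(\PC)_{I(W_\QC)}$ (it contains the constants), giving $\mbox{pd}_{R_{I(W_\QC)}}C^r(\QC)_{I(W_\QC)}\le\mbox{pd}_R C^r(\PC)$ by (F1). Thus (1) will follow from the claim: if $G(\QC)$ is connected and $\A(\QC)$ is central with center $V:=\bigcap_{\tau\in\QC^0_{n-1}}\mbox{aff}(\tau)$, then $\mbox{pd}_R C^r(\QC)=\mbox{pd}_{R_{I(V)}}C^r(\QC)_{I(V)}$ (apply with $V=W_\QC$). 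To prove the claim I would change coordinates affinely so that $V=\{x_1=\cdots=x_c=0\}$, $c=\mbox{codim}\,V$; then $I(V)=(x_1,\dots,x_c)$ and, each $\mbox{aff}(\tau)$ being a hyperplane through the origin containing $V$, each $l_\tau\in R':=\R[x_1,\dots,x_c]$. Writing $R=R'\otimes_\R\R[x_{c+1},\dots,x_n]$, the kernel presentation of $C^r(\QC)$ is the flat (indeed free) base change along $R'\hookrightarrow R$ of the analogous presentation over $R'$, so $C^r(\QC)=M'\otimes_{R'}R$ for a finitely generated graded $R'$‑module $M'$. A minimal graded free resolution of $M'$ over $R'$ has differential entries in $(x_1,\dots,x_c)R'$, so tensoring up to $R$ yields a minimal graded free resolution of $C^r(\QC)$, and localizing that at $I(V)=(x_1,\dots,x_c)R$ — which is the maximal ideal of $R_{I(V)}$ — yields a minimal free resolution of $C^r(\QC)_{I(V)}$ over $R_{I(V)}$; all three have length $\mbox{pd}_{R'}M'$, which proves the claim and hence (1).

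For (2) and the freeness statement: the inequality $\mbox{pd}_R C^r(\PC)\ge\max_{W\in L_\PC}\mbox{pd}_R C^r(\PC_W)$ is exactly (1). Conversely, pick by (F1) a prime $P$ with $\mbox{pd}_{R_P}C^r(\PC)_P=\mbox{pd}_R C^r(\PC)$; by Proposition~\ref{local}, $C^r(\PC)_P=C^r(\PC_W)_P\oplus(\text{free})$ where $\PC_W=\PC_P$. If $C^r(\PC_W)\ne 0$ then $\mbox{pd}_R C^r(\PC)=\mbox{pd}_{R_P}C^r(\PC_W)_P\le\mbox{pd}_R C^r(\PC_W)\le\max_V\mbox{pd}_R C^r(\PC_V)$ by (F2) and (F1); if $C^r(\PC_W)=0$ then $C^r(\PC)_P$ is free, so $\mbox{pd}_R C^r(\PC)=0$, and choosing an interior codimension‑one face $\tau$ and $W'=\mbox{aff}(\tau)$, part (1) forces the nonzero module $C^r(\PC_{W'})$ to be free, so $\max_V\mbox{pd}_R C^r(\PC_V)=0$ as well. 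Since $C^r(\PC)\ne 0$, ``$C^r(\PC)$ is free'' means ``$\mbox{pd}_R C^r(\PC)=0$'', which by (1)--(2) holds iff $\mbox{pd}_R C^r(\PC_W)=0$, i.e.\ $C^r(\PC_W)$ is free, for all nonempty $W\in L_\PC$; this is the last assertion.

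The step I expect to be the real obstacle is the claim invoked in the proof of (1). It rests on two things meshing correctly: that $C^r$ of the possibly non‑hereditary lattice‑complex components $\PC^i_W$ still admits the algebraic kernel presentation — which it does, since $G(\PC^i_W)$ is connected — and that, for a complex whose defining linear forms involve only the coordinates transverse to its central flat $V$, $C^r$ splits off those coordinates as a free polynomial factor, so that its projective dimension is already visible after localizing at the prime $I(V)$. Making this interaction between the intersection lattice $L_\PC$ and the homological algebra precise is the heart of the matter; by comparison, the attainment of the supremum in (F1) is an entirely standard but separate point that should simply be recorded.
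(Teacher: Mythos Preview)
Your strategy is the paper's: combine Proposition~\ref{local} with the standard facts (F1)$=$(A),(B) about $\mbox{pd}$ under localization, and show that for a central spline module the projective dimension is already detected at the prime of its center. The execution of part~(1), however, differs. You pass to components $\QC=\PC^i_W$, localize at the non-maximal prime $I(W_\QC)$, and then need the base-change claim $C^r(\QC)\cong M'\otimes_{R'}R$ to see that a minimal resolution survives this localization. The paper instead picks a generic point $\xi\in W\setminus\bigcup_{V\subsetneq W}V$ (so $\PC_\xi=\PC_W$) and localizes at the \emph{maximal} ideal $I(\xi)$: since every interior codimension-one face of $\PC_W$ has affine span through $\xi$, the module $C^r(\PC_W)$ is graded with respect to $I(\xi)$, so its minimal graded resolution already has all differential entries in $I(\xi)$ and remains minimal after localization, giving $\mbox{pd}\,C^r(\PC_W)_{I(\xi)}=\mbox{pd}\,C^r(\PC_W)$ in one stroke. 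This single observation replaces your component decomposition, your combinatorial check that $\QC$ is a component of $\PC_{W_\QC}$, and the $R'$-module argument you flag as ``the real obstacle.'' Part~(2) is essentially identical in both proofs.

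Two cautions on your route. First, the inclusion $W\subseteq W_\QC$ need not hold with your definition $W_\QC=\bigcap_{\tau\in\QC^0_{n-1}}\mbox{aff}(\tau)$: two facets in the same $G_W$-component can share a codimension-one face $\tau$ with $l_\tau\notin I(W)$, and such a $\tau$ lies in $\QC^0_{n-1}$ but its affine span need not contain $W$. Intersecting only over the edges of $G^i_W(\PC)$ repairs both this and the subsequent argument that $\QC$ is a component of $\PC_{W_\QC}$. Second, the kernel presentation you invoke is guaranteed when $\QC$ is hereditary, not merely when $G(\QC)$ is connected; the paper's one-line gradedness justification (``every interior codimension-one face \dots'') rests on the same implicit point, so this does not separate the two approaches.
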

\begin{proof}
We use the following two facts about projective dimension. Here $M$ is any $R$-module, not necessarily graded.
\begin{enumerate}
\item[(A)] For any prime $P\subset R$, $\mbox{pd}(M)\geq \mbox{pd}(M_P)$.
\item[(B)] $\mbox{pd}(M)=\max\limits_{P\in\mbox{Spec }R} \mbox{pd}(M_P)$.
\end{enumerate}
(A) is immediate from localizing any free resolution $F_\bullet$ of $M$.  (B) follows from showing that there are primes which preserve $\mbox{pd}(M)$ under localization.  Set $\mbox{pd}(M)=r$.  We have $\mbox{Ext}^r_R(M,R)\neq 0$ and taking any prime $P$ in its support will suffice.  For such a prime, we have $\mbox{Ext}^r_{R_P}(M_P,R_P)\cong \mbox{Ext}^r_R(M,R)\otimes_R R_P \neq 0$.  Hence $\mbox{pd}(M_P)\geq r$.  Since we already have $\mbox{pd}(M_P)\leq r$, $\mbox{pd}(M_P)=r$.
(1) Observe that $C^r(\PC_W)$ is graded with respect to $I(\xi)$ for any point $\xi\in W$ since the affine span of every interior codimension $1$ face of $\PC_W$ contains $W$, hence also contains $\xi$.  Choose $\xi\in W\setminus\cup_{V\subset W} V$, where $V\in L_\PC$.  Then $\PC_\xi=\PC_W$.  We have
\[
\mbox{pd}(C^r(\PC))\geq \mbox{pd}(C^r(\PC)_{I(\xi)})=\mbox{pd}(C^r(\PC_\xi)_{I(\xi)})=\mbox{pd}(C^r(\PC_W)_{I(\xi)}),
\]
where the first equality follows from fact (A) above and the second follows from Theorem~\ref{local} since $C^r(\PC)_{I(\xi)}$ is the direct sum of a free module and $C^r(\PC_\xi)_{I(\xi)}$.  $C^r(\PC_W)$ is graded with respect to $I(\xi)$, so a minimal resolution of $C^r(\PC_W)$ has differentials with entries in $I(\xi)$.  It follows that this remains a minimal resolution under localization with respect to $I(\xi)$, so $\mbox{pd}(C^r(\PC_W)_{I(\xi)})=\mbox{pd}(C^r(\PC_W))$, and the result follows.
(2) Set $m=\max\limits_{W\in L_\PC} \mbox{pd}(C^r(\PC_W))$.  From (1) $\mbox{pd}(C^r(\PC))\geq m$.  For any prime $P \subset R$ we have $\PC_P=\PC_W$ for some $W\in L_\PC$ by Lemma~\ref{lat}, so
\[
\mbox{pd}(C^r(\PC)_P)=\mbox{pd}(C^r(\PC_W)_P)\leq \mbox{pd}(C^r(\PC_W)) \leq m.
\]
Hence $\mbox{pd}(C^r(\PC))\leq m$ from fact (B) above, and $\mbox{pd}(C^r(\PC))=m$ as desired.
\end{proof}

Since the $\PC_W$ are central complexes, this reduces computation of $\mbox{pd}(C^r(\PC))$ to the central case.  Via Proposition~\ref{star} we obtain the following theorem of Billera and Rose as a corollary to Theorem~\ref{projcit}.  Recall an $R$-module $M$ is free iff $\mbox{pd}(M)=0$.

\begin{thm}\label{old}[2.3 of \cite{Modules}]
Let $\PC\subset\R^n$ be a polytopal complex.  Then
\begin{enumerate}
\item If $C^r(\PC)$ is free over $R$ then $C^r(\mbox{st}(\tau))$ is free over $R$ for all $\tau\neq\emptyset\in\PC$.
\item If $\PC=\Delta$ is simplicial then the converse is also true: if $C^r(\mbox{st}(\sigma))$ is free for all nonempty $\sigma\in\Delta$, then $C^r(\Delta)$ is free.
\end{enumerate}
\end{thm}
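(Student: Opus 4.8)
The plan is to deduce Theorem~\ref{old} directly from Theorem~\ref{projcit} together with the explicit identification of the lattice complexes $\PC_W$ in the simplicial case provided by Proposition~\ref{star}. The essential point is that Theorem~\ref{projcit}(2) already says $C^r(\PC)$ is free if and only if $C^r(\PC_W)$ is free for every nonempty $W\in L_\PC$, so everything reduces to relating the modules $C^r(\PC_W)$ to the modules $C^r(\mathrm{st}(\tau))$.

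For part (1): suppose $C^r(\PC)$ is free. Let $\tau\neq\emptyset$ be a face of $\PC$, and set $W=\mathrm{aff}(\tau)\in L_\PC$ (note $\mathrm{aff}(\tau)$ is an intersection of hyperplanes in $\A(\PC)$ when $\tau$ is interior of codimension one, and more generally lies in $L_\PC$ — in the fully general polytopal statement one takes $W$ to be the flat with $\PC_W=\mathrm{st}(\tau)$ when such a flat exists; in the simplicial case Proposition~\ref{star} makes this precise). By Theorem~\ref{projcit}(2), freeness of $C^r(\PC)$ forces $C^r(\PC_W)$ to be free, and $C^r(\PC_W)=\bigoplus_{\gamma\in S(W)} C^r(\mathrm{st}(\gamma))$ by Proposition~\ref{star}; a direct sum is free iff each summand is, so each $C^r(\mathrm{st}(\gamma))$ is free. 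Taking $\tau$ minimal containing $W$ in its affine span exhibits $\tau\in S(W)$, giving freeness of $C^r(\mathrm{st}(\tau))$. For a general face $\tau$ one passes to the minimal face $\tau'$ of $\mathrm{st}(\tau)$, observes $\mathrm{st}(\tau)=\mathrm{st}(\tau')$, and applies the above to $\tau'$.

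For part (2) (the simplicial converse): assume $C^r(\mathrm{st}(\sigma))$ is free for every nonempty $\sigma\in\Delta$. Let $W\in L_\Delta$ be an arbitrary nonempty flat. By Proposition~\ref{star}, $\Delta_W=\bigsqcup_{\tau\in S(W)}\mathrm{st}(\tau)$, so $C^r(\Delta_W)=\bigoplus_{\tau\in S(W)} C^r(\mathrm{st}(\tau))$, a finite direct sum of free modules, hence free. Since this holds for every nonempty $W\in L_\Delta$, Theorem~\ref{projcit}(2) gives that $C^r(\Delta)$ is free. The reason the converse needs the simplicial hypothesis is exactly that Proposition~\ref{star} — and behind it Lemma~\ref{simplex}, the statement $\mathrm{aff}(\sigma_1)\cap\mathrm{aff}(\sigma_2)=\mathrm{aff}(\sigma_1\cap\sigma_2)$ — can fail for general polytopes, so the components of $\PC_W$ need not be stars and the reduction breaks down.

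The main obstacle is bookkeeping rather than mathematics: one must be careful that the flat $W$ chosen for a given face $\tau$ actually satisfies $\tau\in S(W)$ (handled by replacing $\tau$ with the intersection of all simplices in $\mathrm{st}(\tau)$, so that $\tau$ is minimal with $W\subset\mathrm{aff}(\tau)$), and that "free $=$ projective dimension $0$" together with the direct-sum decompositions is applied in the right direction in each of the two implications. No estimates or resolutions need to be written down explicitly; it is a formal consequence of the two cited results.
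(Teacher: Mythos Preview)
Your overall strategy---deduce the statement from Theorem~\ref{projcit} together with Proposition~\ref{star}---is exactly what the paper does (the paper offers no more detail than the sentence ``Via Proposition~\ref{star} we obtain the following theorem of Billera and Rose as a corollary to Theorem~\ref{projcit}''). Your argument for part~(2) is correct and complete.

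Your argument for part~(1), however, has a real gap in the general polytopal case. You invoke Proposition~\ref{star} to decompose $C^r(\PC_W)$ as a direct sum of $C^r(\mathrm{st}(\gamma))$'s, but that proposition is proved only for simplicial $\Delta$; and the parenthetical hedge ``one takes $W$ to be the flat with $\PC_W=\mathrm{st}(\tau)$ when such a flat exists'' is a hope, not a proof. What you actually need---and what works without any simplicial hypothesis---is that $\mathrm{st}(\tau)$ is \emph{always} a connected component of $\PC_W$ for the particular flat $W=\bigcap_{\gamma\in\mathrm{st}(\tau)^0_{n-1}}\mathrm{aff}(\gamma)\in L_\PC$. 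Every interior codimension-one face of $\mathrm{st}(\tau)$ contains $\tau$, so $W\supset\tau$ and $l_\gamma\in I(W)$ for each such $\gamma$; conversely, if $\gamma\in\partial(\mathrm{st}(\tau))\setminus\partial\PC$, then $\gamma$ is a facet of some $\sigma\in\PC_n$ with $\tau\subset\sigma$ but $\tau\not\subset\gamma$, and since $\mathrm{aff}(\gamma)$ is a supporting hyperplane of the convex polytope $\sigma$ meeting it exactly in $\gamma$, it cannot contain any relative-interior point of $\tau$, hence $W\not\subset\mathrm{aff}(\gamma)$. Thus no edge of $G_W(\PC)$ leaves $\mathrm{st}(\tau)$, while $G(\mathrm{st}(\tau))$ is connected since $\PC$ is hereditary. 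Now $C^r(\mathrm{st}(\tau))$ is a direct summand of $C^r(\PC_W)$, and Theorem~\ref{projcit}(1) gives $\mathrm{pd}(C^r(\mathrm{st}(\tau)))\le\mathrm{pd}(C^r(\PC_W))\le\mathrm{pd}(C^r(\PC))=0$. The paper's one-line justification glosses over this point as well, but the supporting-hyperplane observation above is what makes part~(1) go through for arbitrary polytopal $\PC$.
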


\section{Lattice-Supported Splines}
Our main application of Proposition~\ref{local} is to construct ``locally supported approximations'' to $C^r(\PC)$ which allow us to generalize the notion of a star-supported basis of $C^r_d(\PC)$ \cite{NonEx} in the simplicial case.  In particular we show that a basis of $C^r_d(\PC)$ consisting of splines supported on complexes of the form $\PC_W$ always exists for $d\gg 0$, \textit{as long as we allow} $W\in L_{\widehat{\PC}}$.

Recall for $\QC$ a pure $n$-dimensional connected subcomplex of $\PC\subset\R^n$, we defined $C^r_\QC(\PC)$ to be the set of splines vanishing outside of $\QC$.  Also recall that $\widehat{\PC}\subset\R^{n+1}$ is the central complex obtained by coning over $\PC$ in $\R^{n+1}$.  In the remark in subsection~\ref{homog}, we defined subcomplexes $\PC_W\subset\PC$ for $W\in L_{\widehat{\PC}}$ by slicing the subcomplex $\widehat{\PC}_W$ with the hyperplane $x_0=1$.  We make the following definitions.

\begin{defn}
For $W\in L_{\widehat{\PC}}$ let $\PC_W$ have components $\PC^1_W,\ldots,\PC^m_W$.
\begin{enumerate}
\item $C^r_W(\PC):=\bigoplus\limits_{i=1}^m C^r_{\PC^i_W}(\PC)$
\item $LS^{r,k}(\PC):=\sum\limits_{\substack{W\in L_{\widehat{\PC}}\\ 0\leq\mbox{\emph{rk}}(W)\leq k}} C^r_W(\PC)$
\end{enumerate}
\end{defn}

$C^r_W(\PC)$ is the submodule of $C^r(\PC)$ generated by splines which are $0$ outside of a component of $\PC_W$.  We say that elements of $C^r_W(\PC)$ are \textit{supported at} $W$.  If a spline $F$ is supported at some $W\in L_{\widehat{\PC}}$ then we call $F$ \textit{lattice-supported}.  By Proposition~\ref{star}, \textit{lattice-supported} splines generalize the notion of \textit{star-supported} splines \cite{NonEx} to polytopal complexes.

\textbf{Remark:} Results in this section which do not depend on grading (Proposition~\ref{krank}, Theorem~\ref{latticegens}, and Corollary~\ref{ungeq}) only require the sum in $(2)$ to run over $W\in L_\PC$, not $W\in L_{\widehat{\PC}}$.

\begin{prop}\label{krank}
Let $\PC\subset\R^n$ be a pure $n$-dimensional polytopal complex.  If $P\subset R$ is a prime such that the unique $W\in L_\PC$ satisfying $\PC_P=\PC_W$, guaranteed by Lemma~\ref{lat}, has $\mbox{\emph{rk}}(W)\leq k$, then $LS^{r,k}(\PC)_P=C^r(\PC)_P$.
\end{prop}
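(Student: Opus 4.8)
The plan is to prove both inclusions. The inclusion $LS^{r,k}(\PC)_P\subseteq C^r(\PC)_P$ is immediate, since each summand $C^r_W(\PC)=\bigoplus_i C^r_{\PC^i_W}(\PC)$ of $LS^{r,k}(\PC)$ consists of splines in $C^r(\PC)$. For the reverse inclusion, I would first invoke Proposition~\ref{local} to decompose
\[
C^r(\PC)_P = C^r(\PC_W)_P\oplus\bigoplus_{\sigma\in\PC_n\setminus(\PC_W)_n}R_P
\]
as submodules of $\bigoplus_{\sigma\in\PC_n}R_P$, and then argue that each of the two summands lies in $LS^{r,k}(\PC)_P$.

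For the first summand, write the components of $\PC_W=\PC_P$ as $\PC^1_W,\dots,\PC^m_W$. The key step is to identify $C^r_W(\PC)_P$ with $C^r(\PC_W)_P$, which by the Observation $(\star)$ reduces to checking that $L_{\partial\PC^i_W}\notin P$ for each $i$. If $\tau$ is a factor of $L_{\partial\PC^i_W}$ (so $\tau\in(\partial\PC^i_W)_{n-1}\setminus(\partial\PC)_{n-1}$), let $\rho_1\in\PC^i_W$ and $\rho_2\notin\PC^i_W$ be the two facets of $\PC$ meeting along $\tau$; were $l_\tau\in P$, then $\rho_1$ and $\rho_2$ would be joined by an edge of $G_P(\PC)$ and so lie in a common component of $\PC_P=\PC_W$, contradicting $\rho_2\notin\PC^i_W$. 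Hence $l_\tau\notin P$, so $L_{\partial\PC^i_W}\notin P$ (as $P$ is prime), and $C^r_{\PC^i_W}(\PC)_P=C^r(\PC^i_W)_P$ by $(\star)$; summing over $i$ gives $C^r_W(\PC)_P=C^r(\PC_W)_P$. Since $\mbox{rk}(W)\le k$, the module $C^r_W(\PC)$ is one of the summands defining $LS^{r,k}(\PC)$, so $C^r(\PC_W)_P=C^r_W(\PC)_P\subseteq LS^{r,k}(\PC)_P$.

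The harder part, and the one I expect to be the main obstacle, is the free summand: an individual facet $\sigma\in\PC_n\setminus(\PC_W)_n$ need not be a lattice complex on its own, so one cannot directly claim that the submodule $C^r_{\{\sigma\}}(\PC)\subseteq C^r(\PC)$ of splines vanishing off $\sigma$ is contained in $LS^{r,k}(\PC)$. The plan here is a ``clearing denominators'' argument combined with enlarging $\{\sigma\}$ to a genuine rank-one lattice complex. Fix such a $\sigma$ and let $e_\sigma\in\bigoplus_{\rho\in\PC_n}R$ be $1$ on $\sigma$ and $0$ elsewhere; it suffices to show $e_\sigma\in LS^{r,k}(\PC)_P$. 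Because $\sigma\notin(\PC_W)_n=(\PC_P)_n$, no codimension-one face of $\sigma$ interior to $\PC$ has its defining linear form in $P$, so $L_{\partial\{\sigma\}}\notin P$. On the other hand $(L_{\partial\{\sigma\}})^{r+1}e_\sigma$ lies in $C^r_{\{\sigma\}}(\PC)$: it vanishes off $\sigma$, and across each codimension-one face $\tau$ of $\sigma$ interior to $\PC$ the jump equals $(L_{\partial\{\sigma\}})^{r+1}$, which is divisible by $l_\tau^{r+1}$. Therefore $e_\sigma=(L_{\partial\{\sigma\}})^{-(r+1)}\cdot(L_{\partial\{\sigma\}})^{r+1}e_\sigma\in C^r_{\{\sigma\}}(\PC)_P$. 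Finally, since $\PC$ is hereditary with more than one facet, $\sigma$ has at least one codimension-one face $\tau_0$ interior to $\PC$; setting $V=\mbox{aff}(\tau_0)\in L_\PC$, a flat of rank $1\le k$, the facet $\sigma$ lies in some component $\PC^j_V$ of $\PC_V$, whence
\[
C^r_{\{\sigma\}}(\PC)\subseteq C^r_{\PC^j_V}(\PC)\subseteq C^r_V(\PC)\subseteq LS^{r,k}(\PC).
\]
This gives $e_\sigma\in LS^{r,k}(\PC)_P$; summing over $\sigma\in\PC_n\setminus(\PC_W)_n$ and combining with the first summand yields $C^r(\PC)_P\subseteq LS^{r,k}(\PC)_P$, completing the proof.
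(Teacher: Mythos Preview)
Your argument is correct and follows the paper's strategy closely: invoke Proposition~\ref{local} to decompose $C^r(\PC)_P$, then show each piece lands in $LS^{r,k}(\PC)_P$. The only real difference is in the free summand. The paper observes that $C^r_{\R^n}(\PC)=\sum_{\sigma\in\PC_n}C^r_\sigma(\PC)$, so each $C^r_\sigma(\PC)$ already sits inside $LS^{r,k}(\PC)$ via the rank-$0$ flat $\R^n$; it then sets $N(P)=C^r_W(\PC)+\sum_{\sigma\in\PC_n\setminus(\PC_P)_n}C^r_\sigma(\PC)\subseteq LS^{r,k}(\PC)$ and verifies $N(P)_P=C^r(\PC)_P$ in one line using $(\star)$ and Proposition~\ref{local}. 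Your detour through a rank-$1$ flat $V=\mbox{aff}(\tau_0)$ reaches the same conclusion but tacitly needs $k\ge 1$ and that each facet outside $\PC_W$ meets another facet across an interior codimension-one face---harmless when $G(\PC)$ is connected, but worth noting. On the other hand, your explicit check that $L_{\partial\PC^i_W}\notin P$ (the hypothesis needed for $(\star)$) is more carefully spelled out than the paper's bare citation of the observation.
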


\begin{proof}
Let $P$ satisfy the given condition.  Since $\mbox{rk}(W)\leq k$ the module $C^r_W(\PC)$ appears as a summand in $LS^{r,k}(\PC)$.  Note that $C^r_{\R^n}(\PC)=\sum_{\sigma\in\PC_n}C^r_\sigma(\PC)$.  Define the submodule $N(P)\subset LS^{r,k}(\PC)$ by $N(P)=C^r_W(\PC)+\sum_{\sigma\in\PC_n\setminus(\PC_P)_n}C^r_\sigma(\PC)$.  The support of the summands of $N(P)$ is disjoint, so it is clear that this is an internal direct sum.  Also, for $\sigma\not\in(\PC_P)_n$, $C^r_\sigma(\PC)_P=R_P$.  We have
\[
\begin{array}{rl}
N(P)_P & = C^r_W(\PC)_P \oplus \bigoplus\limits_{\sigma\in \PC_n\setminus(\PC_P)_n} C^r_\sigma(\PC)_P \\
 & =C^r(\PC_W)_P \oplus \bigoplus\limits_{\sigma\in\PC_n\setminus(\PC_P)_n} R_P\\
 &=C^r(\PC)_P
\end{array}
\]
where the second equality follows from applying observation $(\star)$ to the summands of $C^r_W(\PC)$ and the third equality follows from Theorem~\ref{local}.  Since $N(P)\subset LS^{r,k}(\PC) \subset C^r(\PC)$ and $N(P)_P=C^r(\PC)_P$, we have $LS^{r,k}(\PC)_P=C^r(\PC)_P$.
\end{proof}

\begin{thm}\label{latticegens}
Let $\PC\subset\R^n$ be a polytopal complex.  Then $LS^{r,k}(\PC)$ fits into a short exact sequence
\[
0\rightarrow LS^{r,k}(\PC) \rightarrow C^r(\PC) \rightarrow C \rightarrow 0
\]
where $C$ has codimension $\geq k+1$ and the primes in the support of $C$ with codimension $k+1$ are contained in the set $\{I(W)|W\in L_\PC\mbox{ \emph{and} }\mbox{\emph{rk}}(W)=k+1\}$.
\end{thm}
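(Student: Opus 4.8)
The plan is to build the short exact sequence by taking the inclusion $LS^{r,k}(\PC)\hookrightarrow C^r(\PC)$ (which holds by construction, since each $C^r_W(\PC)$ is a submodule of $C^r(\PC)$) and defining $C$ to be the cokernel; then everything reduces to controlling $\mathrm{Supp}(C)$. The essential input is Proposition~\ref{krank}: localization is exact, so for any prime $P$ we have $C_P = C^r(\PC)_P / LS^{r,k}(\PC)_P$, and this vanishes precisely when $LS^{r,k}(\PC)_P = C^r(\PC)_P$. By Lemma~\ref{lat}, every prime $P$ satisfies $\PC_P = \PC_W$ for a unique $W\in L_\PC$, and Proposition~\ref{krank} tells us $C_P=0$ whenever $\mathrm{rk}(W)\leq k$. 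So the support of $C$ is contained in $\{P : \PC_P = \PC_W \text{ for some } W\in L_\PC \text{ with } \mathrm{rk}(W)\geq k+1\}$.

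Next I would translate this into a codimension statement. If $P\in\mathrm{Supp}(C)$ then $\PC_P=\PC_W$ with $\mathrm{rk}(W)\geq k+1$, and the minimality clause of Lemma~\ref{lat} gives $I(W)\subseteq P$; since $W$ is a flat of $\A(\PC)$ of codimension $\mathrm{rk}(W)\geq k+1$, the ideal $I(W)$ is a prime of height $\geq k+1$, hence $\mathrm{ht}(P)\geq k+1$, i.e. $\mathrm{codim}(P)\geq k+1$. This shows $\mathrm{codim}(C)\geq k+1$. For the refinement about the minimal primes of $C$ of codimension exactly $k+1$: such a $P$ must have $\mathrm{ht}(P)=k+1$ while containing $I(W)$ with $\mathrm{ht}(I(W))=\mathrm{rk}(W)\geq k+1$; the containment $I(W)\subseteq P$ of primes of equal height forces $P=I(W)$, and then $\mathrm{rk}(W)=k+1$. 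Hence any codimension-$(k+1)$ prime in $\mathrm{Supp}(C)$ lies in $\{I(W): W\in L_\PC,\ \mathrm{rk}(W)=k+1\}$, which is exactly the claimed set. (One should also note that whether $W$ ranges over $L_\PC$ or $L_{\widehat\PC}$ is immaterial here, as flagged in the Remark preceding Proposition~\ref{krank}: slicing $\widehat\PC_W$ by $x_0=1$ for $W\in L_{\widehat\PC}$ not lying in the hyperplane at infinity recovers the complexes $\PC_W$ for $W\in L_\PC$, and only those occur as $\PC_P$ for primes $P\subset R$.)

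The one genuinely delicate point is making sure the support statement is phrased correctly, since the support of a cokernel need not be closed unless $C$ is finitely generated — but $C^r(\PC)$ is a finitely generated $R$-module (it is a submodule of the finitely generated module $\bigoplus_{\sigma\in\PC_n}R$, and $R$ is Noetherian), so $C$ is finitely generated and $\mathrm{Supp}(C)=V(\mathrm{Ann}(C))$ is closed, and "$\mathrm{codim}(C)\geq k+1$" is the honest statement that every prime in $\mathrm{Supp}(C)$ has height $\geq k+1$. I expect the main obstacle to be not any single hard argument but rather bookkeeping: verifying that $LS^{r,k}(\PC)$ really is a submodule (so the sequence is exact on the left), pinning down that $I(W)$ is prime of height equal to $\mathrm{rk}(W)$ for $W$ a flat of a real hyperplane arrangement, and correctly invoking Proposition~\ref{krank} for \emph{every} prime $P$ with $\mathrm{rk}(W_P)\leq k$ rather than only for those $P$ of the form $I(W)$. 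Once these are in place, the exact sequence and both support assertions follow immediately.
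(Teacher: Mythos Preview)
Your proposal is correct and follows essentially the same route as the paper: define $C$ as the cokernel, invoke Proposition~\ref{krank} via exactness of localization to see that $C_P=0$ whenever the flat $W$ with $\PC_P=\PC_W$ has $\mathrm{rk}(W)\le k$, and then use the minimality clause of Lemma~\ref{lat} together with the height equality $\mathrm{ht}(I(W))=\mathrm{rk}(W)$ to pin down the codimension-$(k+1)$ primes. Your write-up is in fact more explicit than the paper's (which compresses the height comparison into the single line ``So we must have $\mathrm{rk}(W)=k+1$ and $P=I(W)$''), but the argument is the same.
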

Recall the \textit{support} of $C$ is the set of primes $P\subset R$ satisfying $C_P\neq 0$.
\begin{proof}
The first statement follows from Proposition~\ref{krank}.  Now suppose $C$ is supported at $P$ with $\mbox{codim }P=k+1$, and let $W\in L_\PC$ be the unique flat so that $\PC_P=\PC_W$ (Lemma~\ref{lat}).  If $\mbox{rk}(W)\leq k$ then $LS^{r,k}(\PC)_P=C^r(\PC)_P$ by Proposition~\ref{krank} and $C_P=0$.  So we must have $\mbox{rk}(W)=k+1$ and $P=I(W)$.  Hence the primes of codimension $k+1$ in the support of $C$ are contained in $\{I(W)|W\in L_\PC\mbox{ and rk}(W)=k+1\}$.
\end{proof}

Setting $k=n$ in Theorem~\ref{latticegens}, we obtain

\begin{cor}\label{ungeq}
For $\PC\subset\R^n$, $LS^{r,n}(\PC)=C^r(\PC)$.
\end{cor}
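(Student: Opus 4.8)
The plan is to deduce Corollary~\ref{ungeq} directly from Theorem~\ref{latticegens} by the trivial dimension count: there are no flats of rank greater than $n$. Concretely, set $k=n$ in Theorem~\ref{latticegens}. This yields a short exact sequence
\[
0\rightarrow LS^{r,n}(\PC)\rightarrow C^r(\PC)\rightarrow C\rightarrow 0
\]
in which $C$ has codimension $\geq n+1$. Since $R=\R[x_1,\ldots,x_n]$ has Krull dimension $n$, every nonzero finitely generated $R$-module has a prime in its support of codimension at most $n$; equivalently, the only ideal of codimension $\geq n+1$ is the unit ideal. Hence $C$ cannot have any associated (or supporting) prime, so $C=0$, and the inclusion $LS^{r,n}(\PC)\hookrightarrow C^r(\PC)$ is an isomorphism.

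First I would make explicit why $C$ being finitely generated is legitimate: $C^r(\PC)$ is a submodule of the Noetherian module $\bigoplus_{\sigma\in\PC_n} R$, hence finitely generated, and $C$ is a quotient of it, so $C$ is finitely generated. Then I would invoke the standard fact that for a finitely generated module $M$ over a Noetherian ring $A$, if $M\neq 0$ then $\mbox{Supp}(M)\neq\emptyset$ and contains a prime of every codimension achieved by $\dim A-\dim(A/\mathfrak{p})$ for $\mathfrak{p}$ minimal over the annihilator; the upshot I actually need is just the bound $\mbox{codim}\,\mathfrak{p}\le \dim A = n$ for any $\mathfrak{p}\in\mbox{Supp}(M)$. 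Alternatively, phrase it via primes: the support of a nonzero finitely generated module over $R$ always contains a prime $P$ with $\mbox{codim}\,P\le n$, simply because $\mbox{codim}\,P\le\dim R=n$ for all primes $P\subset R$. Since Theorem~\ref{latticegens} forces every prime in $\mbox{Supp}(C)$ to have codimension $\ge n+1$, the set $\mbox{Supp}(C)$ is empty, so $C=0$.

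An even more elementary route, avoiding any appeal to support, is to localize: for every prime $P\subset R$, Lemma~\ref{lat} gives a flat $W\in L_\PC$ with $\PC_P=\PC_W$, and $\mbox{rk}(W)=\mbox{codim}(W)=\dim R-\dim(R/I(W))\le n$, so by Proposition~\ref{krank} (with $k=n$) we get $LS^{r,n}(\PC)_P=C^r(\PC)_P$. Thus the inclusion $LS^{r,n}(\PC)\subseteq C^r(\PC)$ is an isomorphism after localization at every prime, hence an isomorphism. (Here I should note the remark preceding Proposition~\ref{krank}: for the ungraded statement the sum defining $LS^{r,k}$ may be taken over $W\in L_\PC$ rather than $L_{\widehat\PC}$, which is what makes this clean.)

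I do not anticipate a genuine obstacle — this is a formal corollary. The only point requiring a sentence of care is the claim that an $R$-module with all supporting primes of codimension $\ge n+1$ must vanish; this is exactly the statement that $R$ has Krull dimension $n$, together with the fact that $\mbox{Supp}(M)=\emptyset \iff M=0$ for finitely generated $M$. I would present the localization argument as the main proof since it reuses machinery already developed (Lemma~\ref{lat}, Proposition~\ref{krank}) and sidesteps even this mild subtlety, mentioning the exact-sequence viewpoint as an immediate alternative.
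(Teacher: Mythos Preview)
Your proposal is correct and your primary argument is exactly the paper's approach: the paper's entire proof is the one-line observation ``Setting $k=n$ in Theorem~\ref{latticegens}'', i.e., the cokernel $C$ has codimension $\geq n+1$ in a ring of Krull dimension $n$ and hence vanishes. Your alternative localization argument via Lemma~\ref{lat} and Proposition~\ref{krank} is also valid (and indeed is essentially how Theorem~\ref{latticegens} itself is proved), but the paper does not spell it out separately.
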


\subsection{Graded Results}

Let $R_{\le d}$ and $R_d$ be the set of polynomials $f\in R=\R[x_1,\ldots,x_n]$ of degree $\le d$ and degree $d$, respectively.  For $\PC\subset\R^n$ we have a filtration of $C^r(\PC)$ by $\R$-vector spaces
\[
C^r_d(\PC):=\{F\in C^r(\PC)|F_{\sigma}\in R_{\le d} \mbox{ for all facets }\sigma\in\PC_n\}.
\]
If $\PC$ is a central complex, $C^r(\PC)$ is graded by the vector spaces
\[
C^r(\PC)_d:=\{F\in C^r(\PC)|F_{\sigma}\in R_d \mbox{ for all facets }\sigma\in\PC_n\}.
\]
In other words, $C^r(\PC)=\bigoplus_{d\ge 0} C^r_d(\PC)$.  We have the following lemma due to Billera and Rose.

\begin{lem}[Theorem 2.6 of \cite{DimSeries}]\label{splinehom}
$C^r_d(\PC)\cong C^r(\widehat{\PC})_d$ as $\R$-vector spaces.
\end{lem}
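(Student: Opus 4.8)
The plan is to exhibit an explicit $\R$-linear isomorphism between $C^r_d(\PC)$ and the degree-$d$ graded piece $C^r(\widehat{\PC})_d$, using the coning construction $\widehat{\PC}\subset\R^{n+1}$ together with the map $i:\R^n\to\R^{n+1}$, $i(a_1,\dots,a_n)=(1,a_1,\dots,a_n)$, and its algebraic counterpart, homogenization of polynomials with respect to the new variable $x_0$. Concretely, for a spline $F\in C^r_d(\PC)$, each restriction $F_\sigma\in R_{\le d}$ homogenizes to a polynomial $\widehat{F}_\sigma\in\R[x_0,\dots,x_n]_d$ via $\widehat{F}_\sigma(x_0,\dots,x_n)=x_0^d\,F_\sigma(x_1/x_0,\dots,x_n/x_0)$; collecting these gives a candidate element $\widehat F$ of $\bigoplus_{\widehat\sigma\in\widehat{\PC}_{n+1}}\R[x_0,\dots,x_n]_d$, since the facets of $\widehat{\PC}$ are exactly the cones over facets of $\PC$.

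First I would check this assignment is well defined, i.e. that $\widehat F$ actually lies in $C^r(\widehat{\PC})_d$. The key point is the dual (Billera–Rose) characterization of smoothness: $F\in C^r(\PC)$ iff for every pair of adjacent facets $\sigma,\sigma'$ meeting in a codimension-one face $\tau$, the difference $F_\sigma-F_{\sigma'}$ is divisible by $l_\tau^{r+1}$. Under coning, the interior codimension-one faces $\tau$ of $\PC$ correspond to interior codimension-one faces $\widehat\tau$ of $\widehat{\PC}$, and the affine form $l_\tau$ on $\R^n$ corresponds to the linear form $l_{\widehat\tau}=x_0\,l_\tau(x_1/x_0,\dots,x_n/x_0)$ (the homogenization of $l_\tau$, which is exactly a generator of $I(\widehat\tau)$). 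One then verifies that homogenization is multiplicative up to powers of $x_0$ and that if $l_\tau^{r+1}\mid (F_\sigma-F_{\sigma'})$ in $R$ then $l_{\widehat\tau}^{r+1}\mid(\widehat F_\sigma-\widehat F_{\sigma'})$ in $\R[x_0,\dots,x_n]$ — here one has to be a little careful about the powers of $x_0$ introduced when homogenizing a sum of two polynomials whose degrees may be less than $d$, but since we pad everything up to degree $d$ before homogenizing (using the fixed global degree bound $d$), the bookkeeping works out. Conversely, given $G\in C^r(\widehat{\PC})_d$, setting $x_0=1$ dehomogenizes each $G_{\widehat\sigma}$ to a polynomial in $R$ of degree $\le d$ and sends $l_{\widehat\tau}$ to $l_\tau$, so the divisibility conditions descend and we get an element of $C^r_d(\PC)$.

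Next I would show these two maps — homogenize-at-$x_0$ and dehomogenize-by-$x_0=1$ — are mutually inverse $\R$-linear bijections. $\R$-linearity in each direction is clear once the maps are set up (homogenizing with respect to a fixed target degree $d$ is linear on $R_{\le d}$, and setting $x_0=1$ is a ring homomorphism hence linear). That dehomogenize $\circ$ homogenize is the identity on $C^r_d(\PC)$ is immediate from $x_0^d F_\sigma(x_1/x_0,\dots)|_{x_0=1}=F_\sigma$. For the other composite, one uses that each $G_{\widehat\sigma}$ is homogeneous of degree exactly $d$, so rehomogenizing $G_{\widehat\sigma}|_{x_0=1}$ to degree $d$ recovers $G_{\widehat\sigma}$; this is the standard fact that homogenization and dehomogenization are inverse bijections between $R_{\le d}$ and $\R[x_0,\dots,x_n]_d$.

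The main obstacle, and the only place demanding real care, is the compatibility of homogenization with the codimension-one divisibility conditions across a face $\tau$ when $F_\sigma$ and $F_{\sigma'}$ have different degrees (both $\le d$ but not necessarily equal to $d$): one must confirm that homogenizing to the common target degree $d$ does not spoil divisibility by $l_{\widehat\tau}^{r+1}$, and that no spurious factor of $x_0$ appears that would fail to divide across the \emph{boundary} faces (where no condition is imposed) or, conversely, that $x_0$ itself — which is the linear form cutting out the hyperplane at infinity added in the cone — never needs to be a factor. Since $\widehat{\PC}$ is the cone with apex the origin and $x_0$ does not vanish on the relative interior of any facet cone, $x_0$ is a non-zerodivisor in the relevant quotient and the padding argument goes through. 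Given that this is Theorem 2.6 of \cite{DimSeries}, I would cite that source for the detailed verification and present only the construction of the isomorphism and the statement of the two divisibility-compatibility lemmas.
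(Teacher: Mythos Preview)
Your proposal is correct and matches the paper's approach exactly: the paper does not prove this lemma but cites it as Theorem~2.6 of \cite{DimSeries} and simply records the homogenization map $\phi_d(f)=x_0^d f(x_1/x_0,\ldots,x_n/x_0)$ and its inverse $F\mapsto F(1,x_1,\ldots,x_n)$, which is precisely the construction you describe. Your additional discussion of why the divisibility conditions across codimension-one faces are preserved under homogenization/dehomogenization is more detail than the paper provides, but it is the standard content of the cited Billera--Rose argument.
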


The map $\phi_d: C^r_d(\PC)\rightarrow C^r(\widehat{\PC})_d$ in Lemma~\ref{splinehom} is given by \textit{homogenizing}:
\[
f(x_1,\ldots,x_n)\in C^r_d(\PC) \rightarrow x_0^d f(x_1/x_0,\ldots,x_n/x_0)\in C^r(\widehat{\PC})_d,
\]
while its inverse $\phi_d^{-1}:C^r(\widehat{\PC})_d \rightarrow C^r_d(\PC)$ is given by
\[
F(x_0,\ldots,x_n)\in C^r(\widehat{\PC})_d \rightarrow F(1,x_1,\ldots,x_n)\in C^r_d(\PC).
\]
We define parallel filtrations and gradings for $LS^{r,k}(\PC)$.  For $W\in L_{\widehat{\PC}}$, define
\[
\begin{array}{rl}
C^r_{W,d}(\PC):=&C^r_W(\PC)\cap C^r_d(\PC)\\
C^r_W(\widehat{\PC})_d:=&C^r_W(\widehat{\PC})\cap C^r(\widehat{\PC})_d
\end{array}
\]
Then $LS^{r,k}(\PC)$ has a filtration by vector spaces
\[
LS^{r,k}_d(\PC):=\sum\limits_{\substack{W\in L_{\widehat{\PC}}\\ 0\leq\mbox{rk}(W)\leq k}} C^r_{W,d}(\PC),
\]
which are subspaces of $C^r_d(\PC)$.  If $\PC$ is a central complex, $LS^{r,k}(\PC)$ is graded by the vector spaces
\[
\begin{array}{rl}
LS^{r,k}(\PC)_d:= & \{F\in LS^{r,k}(\PC)|F_{\sigma}\in R_d \mbox{ for all facets }\sigma\in\PC_n\}\\[5 pt]
= &\sum\limits_{\substack{W\in L_{\widehat{\PC}}\\ 0\leq\mbox{rk}(W)\leq k}} C^r_W(\widehat{\PC})_d,
\end{array}
\]
which are subspaces of $C^r(\widehat{\PC})_d$.  If $\PC$ is central, the set of subcomplexes $\PC_W$ for $W\in L_{\widehat{\PC}}$ is the same as the set of subcomplexes $\PC_W$ for $W\in L_\PC$ (there is a rank preserving isomorphism between $L_{\PC}$ and $L_{\widehat{\PC}}$ in this case).  Hence we may write
\[
LS^{r,k}(\PC)_d=\sum\limits_{\substack{W\in L_\PC \\ 0\leq\mbox{rk}(W)\leq k}} C^r_W(\widehat{\PC})_d
\]

\begin{lem}\label{latticehom}
$LS^{r,k}_d(\PC)$ and $LS^{r,k}(\widehat{\PC})_d$ are isomorphic as $\R$-vector spaces.
\end{lem}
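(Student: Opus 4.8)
The plan is to build the isomorphism $LS^{r,k}_d(\PC)\cong LS^{r,k}(\widehat{\PC})_d$ by restricting the homogenization map $\phi_d\colon C^r_d(\PC)\to C^r(\widehat{\PC})_d$ from Lemma~\ref{splinehom}. Since $LS^{r,k}_d(\PC)$ is by definition a subspace of $C^r_d(\PC)$ and $LS^{r,k}(\widehat{\PC})_d$ is a subspace of $C^r(\widehat{\PC})_d$, and $\phi_d$ is already known to be an isomorphism of the ambient spaces, it suffices to show that $\phi_d$ carries $LS^{r,k}_d(\PC)$ \emph{onto} $LS^{r,k}(\widehat{\PC})_d$. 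That is, I must check $\phi_d(LS^{r,k}_d(\PC))\subseteq LS^{r,k}(\widehat{\PC})_d$ and $\phi_d^{-1}(LS^{r,k}(\widehat{\PC})_d)\subseteq LS^{r,k}_d(\PC)$. Because both $LS^{r,k}_d$ and $LS^{r,k}(\widehat{\PC})_d$ are described as sums indexed by the \emph{same} index set $\{W\in L_{\widehat{\PC}}\mid 0\le\mbox{rk}(W)\le k\}$, it is enough to prove, for each such $W$, that $\phi_d$ restricts to an isomorphism $C^r_{W,d}(\PC)\xrightarrow{\ \sim\ } C^r_W(\widehat{\PC})_d$; the statement for the sums then follows by applying $\phi_d$ (resp. $\phi_d^{-1}$) termwise.

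The key step is therefore the single-flat statement $\phi_d(C^r_{W,d}(\PC))=C^r_W(\widehat{\PC})_d$. First I would recall that $C^r_W(\PC)=\bigoplus_i C^r_{\PC^i_W}(\PC)$, and that by the Remark in subsection~\ref{homog} the subcomplex $\widehat{\PC}_W\subset\widehat{\PC}$ is precisely the cone over $\PC_W$; in particular $\widehat{\PC}_W$ and $\PC_W$ have the same facets (via the cone correspondence on facets), so the components $\widehat{\PC}^i_W$ of $\widehat{\PC}_W$ are in bijection with the components $\PC^i_W$ of $\PC_W$. The crucial observation is that the support condition defining $C^r_{\PC^i_W}(\PC)$ — namely $F_\sigma=0$ for $\sigma\in\PC_n\setminus(\PC^i_W)_n$ — is preserved in both directions by homogenization and dehomogenization: $\phi_d(F)_{\widehat\sigma}$ is obtained from $F_\sigma$ by the substitution $x_j\mapsto x_j/x_0$ times $x_0^d$, which is zero exactly when $F_\sigma$ is zero, and likewise for $\phi_d^{-1}$ under $x_0\mapsto 1$. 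Hence $\phi_d$ maps $C^r_{\PC^i_W,d}(\PC)$ onto $C^r_{\widehat\PC^i_W}(\widehat{\PC})_d$, and summing over $i$ gives $\phi_d(C^r_{W,d}(\PC))=C^r_W(\widehat{\PC})_d$.

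I do not expect a serious obstacle here; the main point requiring care is purely bookkeeping: one must make sure the identification of the index set $\{W\in L_{\widehat\PC}\}$ used on the two sides is literally the same (it is, since $LS^{r,k}_d(\PC)$ and $LS^{r,k}(\widehat\PC)_d$ are both defined with $W$ ranging over $L_{\widehat{\PC}}$), and that the cone correspondence $\PC_W\leftrightarrow\widehat{\PC}_W$ matches up facets and hence the vanishing conditions. Once the per-$W$, per-component statement $\phi_d\colon C^r_{\PC^i_W,d}(\PC)\xrightarrow{\sim}C^r_{\widehat\PC^i_W}(\widehat\PC)_d$ is in hand, the rest is the formal fact that an isomorphism of ambient spaces restricting to isomorphisms on each summand of a (finite) sum of subspaces restricts to an isomorphism of the sums. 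This finishes the proof.
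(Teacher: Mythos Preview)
Your proposal is correct and follows essentially the same route as the paper: restrict the Billera--Rose homogenization isomorphism $\phi_d$ to the summands $C^r_{W,d}(\PC)$ indexed over $W\in L_{\widehat{\PC}}$, and check that homogenization and dehomogenization preserve the support conditions because $\widehat{\PC}_W$ is the cone over $\PC_W$. The only difference is that you work component-by-component ($\PC^i_W\leftrightarrow\widehat{\PC}^i_W$) whereas the paper argues at the level of all of $\PC_W$ at once; this is a presentational refinement, not a different idea.
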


\begin{proof}
We show that the homogenization map $\phi_d:C^r_d(\PC)\rightarrow C^r(\widehat{\PC})_d$ restricts to an isomorphism between $LS^{r,k}_d(\PC)$ and $LS^{r,k}(\widehat{\PC})_d$. We have
\[
\begin{array}{rl}
LS^{r,k}_d(\PC):= & \sum\limits_{\substack{W\in L_{\widehat{\PC}}\\ 0\leq\mbox{rk}(W)\leq k}} C^r_{W,d}(\PC)\\
LS^{r,k}(\widehat{\PC})_d= & \sum\limits_{\substack{W\in L_{\widehat{\PC}}\\ 0\leq\mbox{rk}(W)\leq k}} C^r_W(\widehat{\PC})_d
\end{array}
\]
Since $\phi_d$ is $\R$-linear, it suffices to show that, given $W\in L_{\widehat{\PC}}$, $\phi_d$ restricts to an isomorphism $\phi_d:C^r_{W,d}(\PC)\rightarrow C^r_W(\widehat{\PC})_d$.  We show $\phi_d(C^r_{W,d}(\PC))\subset C^r_W(\widehat{\PC})_d$ and $\phi^{-1}_d(C^r_W(\widehat{\PC})_d)\subset C^r_{W,d}(\PC)$.  Suppose $f\in C^r_{W,d}(\PC)$.  The support of $f$ is by definition contained in the subcomplex $\PC_W$, so the support of $\phi_h(f)$ is contained in the cone over $\PC_W$, which is precisely $\widehat{\PC}_W$.  It follows that $\phi_d(f)\in C^r_W(\widehat{\PC})$.  Since $\phi_d(f)\in C^r(\widehat{\PC})_d$, $\phi_d(f)\in (C^r_W(\widehat{\PC})\cap C^r(\widehat{\PC})_d)=C^r_W(\widehat{\PC})_d$.  Now suppose $F\in C^r_W(\widehat{\PC})_d$.  The support of $F$ is contained in the subcomplex $\widehat{\PC}_W$, so the support of $\phi^{-1}_h(F)$ is contained in the complex obtained by slicing $\widehat{\PC}_W$ with the hyperplane $x_0=1$.  But this is by definition $\PC_W$, so $\phi^{-1}_d(F)\in C^r_W(\PC)$.  Since $\phi^{-1}_d(F)\in C^r_d(\PC)$, $\phi^{-1}_d(F)\in (C^r_W(\PC)\cap C^r_d(\PC)) =C^r_{W,d}(\PC)$.
\end{proof}

For the following theorem, recall that the \textit{Hilbert function} $HF(M,d)$ of a finitely generated graded module $M=\bigoplus_{d} M_d$ over $R=\R[x_1,\ldots,x_n]$ is defined by
\[
HF(M,d)=\mbox{dim}_\R M_d
\]
and the \textit{Hilbert polynomial} $HP(M,d)$ of $M$ is the polynomial with which $HF(M,d)$ agrees for $d\gg 0$.

\begin{thm}\label{main}
Let $\PC\subset\R^n$ be a polytopal complex.
\begin{enumerate}
\item If $\PC$ is central, the first $k+1$ coefficients of $HP(C^r(\PC))$ and $HP(LS^{r,k}(\PC))$ agree.  In particular, $LS^{r,n-1}(\PC)_d=C^r(\PC)_d$ for $d\gg 0$.
\item $LS^{r,n}_d(\PC)=C^r_d(\PC)$ for $d\gg 0$.  Equivalently, $C^r_d(\PC)$ has a basis consisting of lattice-supported splines for $d\gg 0$.
\end{enumerate}
\end{thm}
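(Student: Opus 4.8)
The plan is to deduce both parts from Theorem~\ref{latticegens} together with the homogenization isomorphisms of Lemmas~\ref{splinehom} and~\ref{latticehom}, using nothing beyond additivity of Hilbert polynomials on short exact sequences.

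First I would treat part (1). When $\PC$ is central, the subcomplexes $\PC_W$ for $W\in L_{\widehat\PC}$ coincide with those for $W\in L_\PC$ via the rank-preserving isomorphism $L_\PC\cong L_{\widehat\PC}$, so $LS^{r,k}(\PC)$ agrees with the version from the Remark following Corollary~\ref{ungeq}, and Theorem~\ref{latticegens} supplies an exact sequence $0\to LS^{r,k}(\PC)\to C^r(\PC)\to C\to 0$ with $\operatorname{codim}C\ge k+1$. The point to check is that this is a sequence of \emph{graded} $R$-modules: since $\PC$ is central, each $C^r_\QC(\PC)$ is a graded submodule of $\bigoplus_{\sigma\in\PC_n}R$ (the conditions $F_\sigma=0$ pass to graded pieces), hence so are $C^r_W(\PC)$ and $LS^{r,k}(\PC)$, and $C$ inherits a grading. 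Then $C$ is a finitely generated graded module of Krull dimension $\le n-k-1$, so $HP(C,d)$ is a polynomial in $d$ of degree $\le n-k-2$ (identically $0$ when $k=n-1$). Additivity of Hilbert polynomials gives $HP(C^r(\PC),d)-HP(LS^{r,k}(\PC),d)=HP(C,d)$, of degree $\le n-k-2$; writing each Hilbert polynomial as $c_0d^{n-1}+c_1d^{n-2}+\cdots+c_{n-1}$, the top $k+1$ coefficients $c_0,\dots,c_k$ therefore coincide. Taking $k=n-1$ forces $\dim C\le 0$, so $C$ has finite length, whence $C_d=0$ and $LS^{r,n-1}(\PC)_d=C^r(\PC)_d$ for $d\gg 0$.

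For part (2), I would apply part (1) to the central complex $\widehat\PC\subset\R^{n+1}$ with $k=n=(n+1)-1$, obtaining $LS^{r,n}(\widehat\PC)_d=C^r(\widehat\PC)_d$ for $d\gg 0$, and then transport this back to $\PC$ by homogenization. By Lemma~\ref{splinehom} the map $\phi_d$ is an isomorphism $C^r_d(\PC)\xrightarrow{\ \sim\ }C^r(\widehat\PC)_d$, and Lemma~\ref{latticehom} (whose proof shows $\phi_d$ effects this restriction) gives that $\phi_d$ restricts to an isomorphism of subspaces $LS^{r,n}_d(\PC)\xrightarrow{\ \sim\ }LS^{r,n}(\widehat\PC)_d$. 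As the target subspace equals the entire target for $d\gg 0$ and $\phi_d$ is injective, the source subspace equals the entire source: $LS^{r,n}_d(\PC)=C^r_d(\PC)$ for $d\gg 0$. Finally, $LS^{r,n}_d(\PC)$ is by definition a sum of the subspaces $C^r_{W,d}(\PC)$, each spanned by lattice-supported splines, so a basis of $C^r_d(\PC)$ can be extracted from this spanning set of lattice-supported splines, which is the equivalent formulation.

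The substantive input — the exactness of the sequence in Theorem~\ref{latticegens} and, above all, the bound $\operatorname{codim}C\ge k+1$ on its cokernel, which rests on the localization description in Proposition~\ref{local} — is already in hand, so the remaining work here is essentially formal. The one place demanding care is the interplay between the \emph{filtration} $C^r_d(\PC)$ on the non-central side and the \emph{grading} $C^r(\widehat\PC)_d$ on the central side: part (1) is a statement about Hilbert polynomials of graded modules and cannot be applied directly to $C^r(\PC)$ when $\PC$ is not central, which is precisely why the homogenization step is needed and why Lemma~\ref{latticehom}, the compatibility of $\phi_d$ with lattice supports, is the linchpin of part (2). I expect no genuine obstacle beyond this bookkeeping.
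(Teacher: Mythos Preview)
Your proposal is correct and follows essentially the same route as the paper: for (1) you invoke Theorem~\ref{latticegens}, use the codimension bound $\operatorname{codim} C\ge k+1$ together with additivity of Hilbert polynomials to match the top $k+1$ coefficients, and for $k=n-1$ conclude $C$ has finite length so the graded pieces agree for $d\gg 0$; for (2) you apply (1) to $\widehat{\PC}$ and descend via Lemmas~\ref{splinehom} and~\ref{latticehom}. The only cosmetic differences are that you argue the $k=n-1$ case directly from $C_d=0$ rather than via equality of Hilbert polynomials and then Hilbert functions, and that you spell out the gradedness and the final basis-extraction step more explicitly than the paper does.
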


\begin{proof}
(1) Applying Theorem~\ref{latticegens} to $LS^{r,k}(\PC)$ yields the short exact sequence
\[
0\rightarrow LS^{r,k}(\PC) \rightarrow C^r(\PC) \rightarrow C \rightarrow 0,
\]
where $\mbox{codim } C\le k+1$.  It follows that $HP(C,d)$ has degree at most $(n-1)-(k+1)$.  On the other hand, $HP(C^r(\PC),d)$ has degree $n-1$.  Since $HP(C^r(\PC),d)-HP(LS^{r,k}(\PC),d)=HP(C,d)$, the first $k+1$ coefficents of $HP(C^r(\PC),d)$ and $HP(LS^{r,k}(\PC),d)$ agree.  Now specialize to $k=n-1$.  Then $HP(C,d)=0$ so $HP(C^r(\PC),d)=HP(LS^{r,n-1}(\PC),d)$, implying $HF(C^r(\PC),d)=HF(LS^{r,n-1}(\PC),d)$ for $d\gg 0$.  Since $LS^{r,n-1}(\PC)_d \subset C^r(\PC)_d$, we have $LS^{r,n-1}(\PC)_d = C^r(\PC)_d$ for $d\gg 0$.
(2) From part (1), $LS^{r,n}(\widehat{\PC})_d = C^r(\widehat{\PC})_d$ for $d\gg 0$.  The result follows by applying Lemma~\ref{latticehom} to the left hand side and Lemma~\ref{splinehom} to the right hand side.
\end{proof}

\begin{exm}\label{Graded}
We give an example to highlight the difference between Corollary~\ref{ungeq} and Theorem~\ref{main} part (2).  Take the underlying complex to be the complex $\QC$ from Figure~\ref{SEVL}.  In Figure~\ref{1decomp} we show a decomposition for the unit in $C^0(\QC)$, $\mathbf{1}=\sum_{j=1}^5 G_j$, with $G_j\in C^0_2(\QC)$.  The splines $G_1,\ldots,G_5$ have support in the subcomplexes $\QC_W$ for $W\in L_\QC$.  Given any spline $F\in C^0_d(\QC)$, $\sum_{j=1} G_j\cdot F$ gives a decomposition of $F$ using lattice-supported splines in $C^0_{d+2}(\QC)$.   It follows that $C^0(\QC)=\sum_{W\in L_\QC} C^0_W(\QC)$, \textit{without} using the two complexes $\QC_\alpha,\QC_\beta$ of Figure~\ref{projective}, which correspond to intersections `at infinity.'  This is true in general; the statement of Corollary~\ref{ungeq} can be changed to
\[
\sum\limits_{W\in L_\PC} C^r_W(\PC)=C^r(\PC),
\]
without altering the proof.

However, if we want to write every spline $F\in C^0_d(\QC)$ as a sum of lattice-supported splines of degree \textit{at most} $d$, we must use the two complexes $\QC_\alpha$ and $\QC_\beta$.  For example, a computation in Macaulay2 shows that the spline $x^2\cdot\mathbf{1}\in C^0_2(\QC)$ is not in the vector space $\sum_{W\in L_\PC} C^0_{W,2}(\QC)$, while a decomposition for $x^2\cdot\mathbf{1}$ in $\sum_{W\in L_{\widehat{\PC}}} C^0_{W,2}(\QC)$ is shown in Figure~\ref{x2decomp}.  Set $l_1=x+1, l_2=y-1, l_3=x-1, l_4=y+1, l_5=x-y, l_6=x+y$ below.
\begin{figure}[htp]
\begin{flushleft}
\begin{minipage}{.2\textwidth}
\centering
\includegraphics[scale=.25]{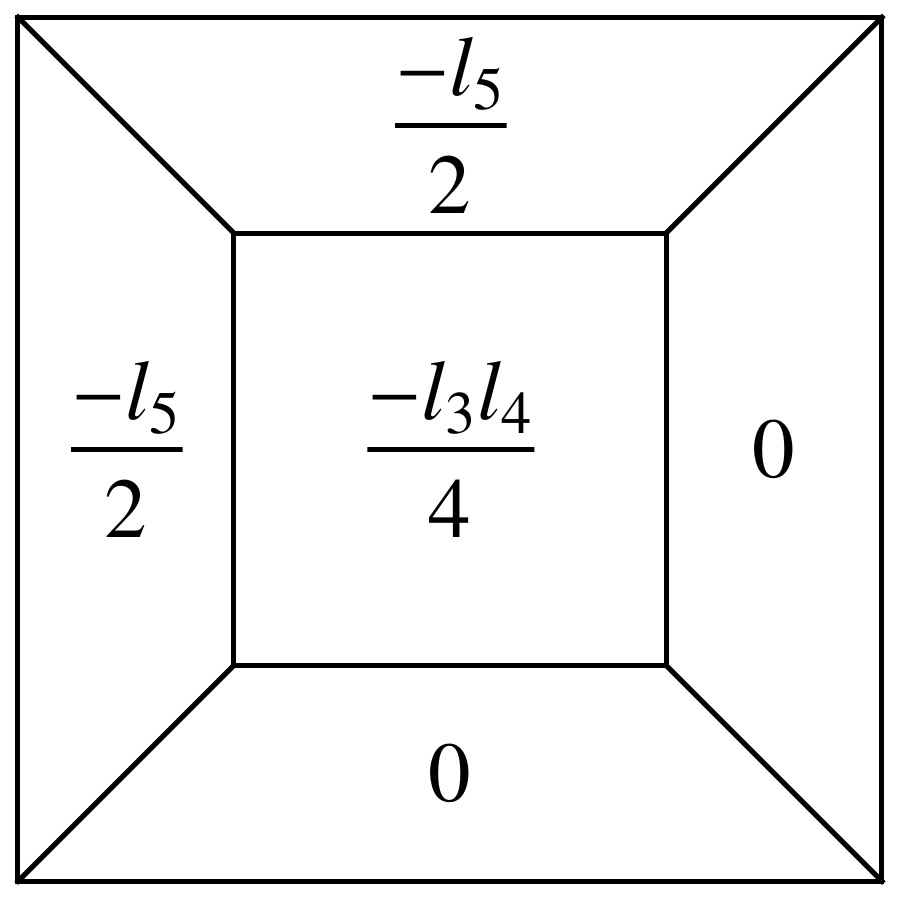}
\end{minipage}
\begin{minipage}{.02\textwidth}
\centering
$+$
\end{minipage}
\begin{minipage}{.2\textwidth}
\centering
\includegraphics[scale=.25]{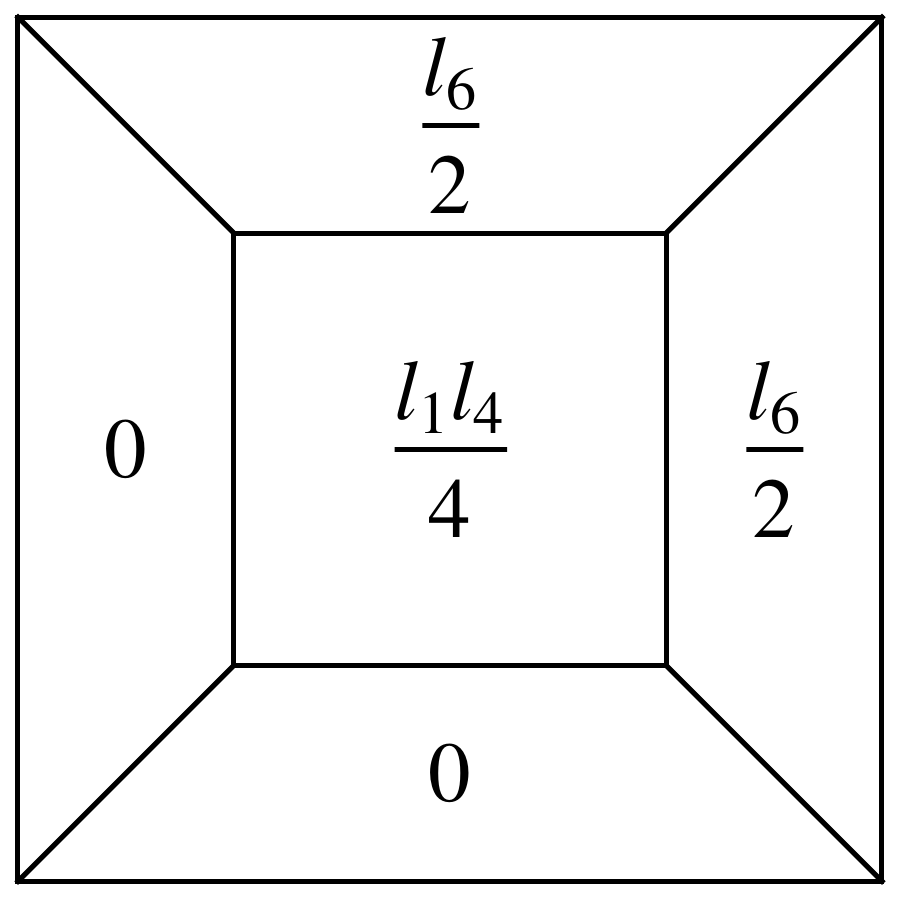}
\end{minipage}
\begin{minipage}{.02\textwidth}
\centering
$+$
\end{minipage}
\begin{minipage}{.2\textwidth}
\centering
\includegraphics[scale=.25]{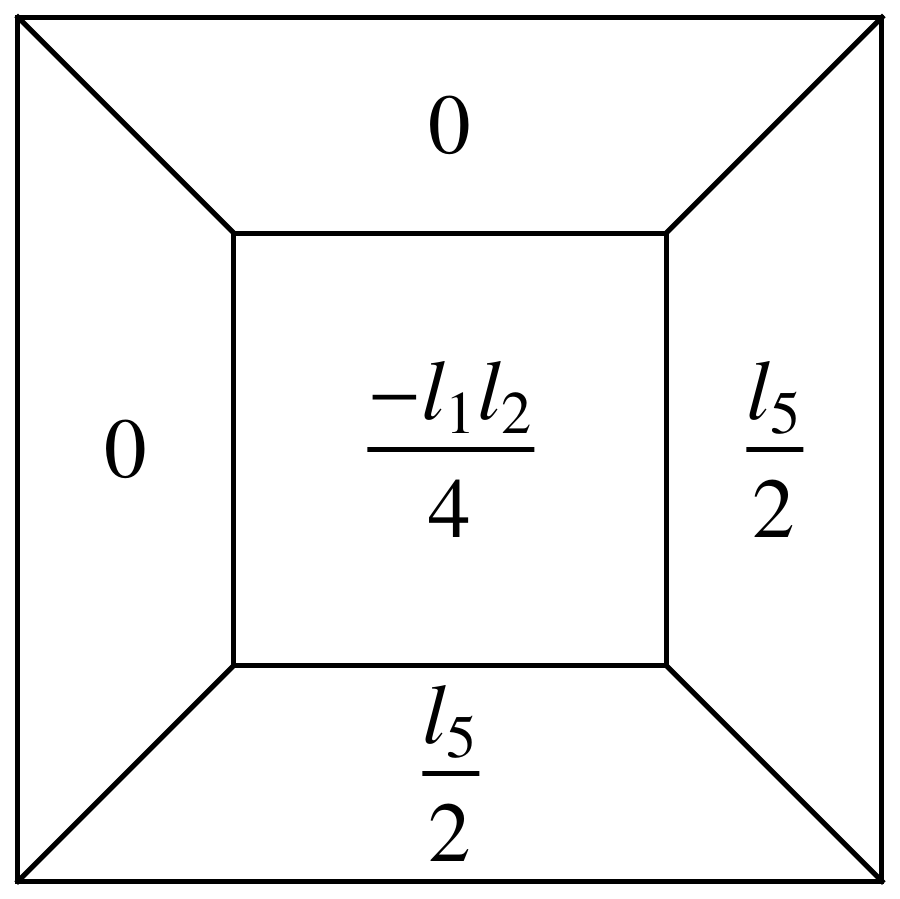}
\end{minipage}
\begin{minipage}{.02\textwidth}
\centering
$+$
\end{minipage}
\begin{minipage}{.2\textwidth}
\centering
\includegraphics[scale=.25]{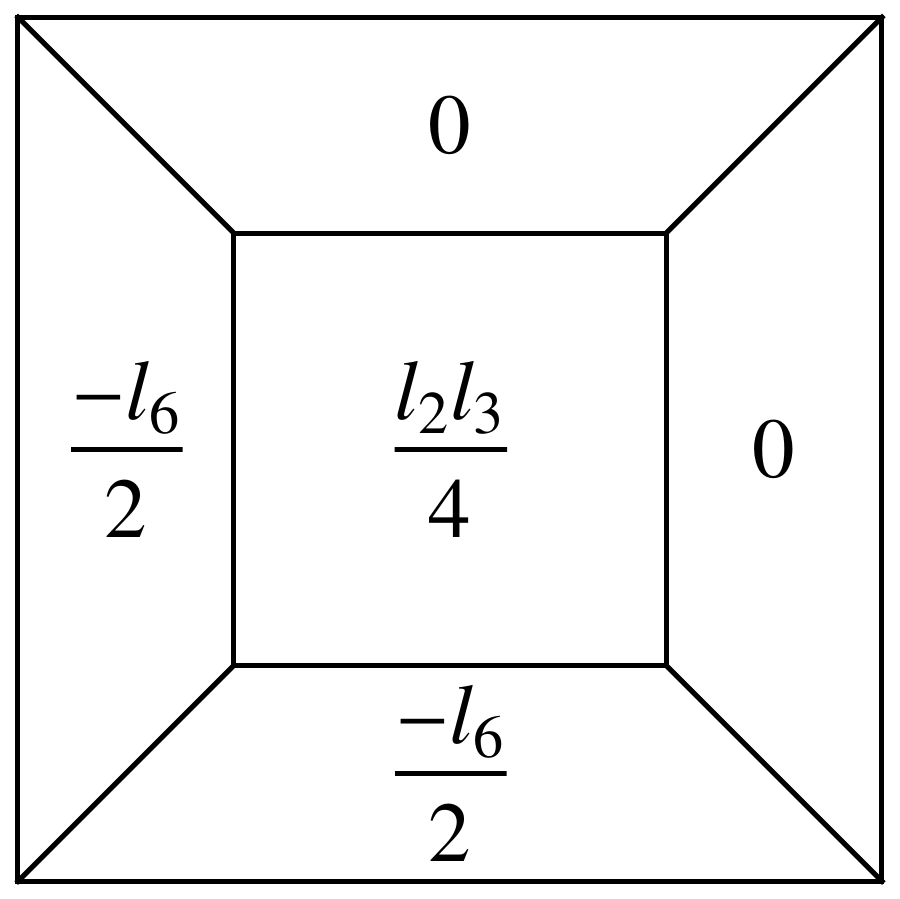}
\end{minipage}
\begin{minipage}{.02\textwidth}
\centering
$+$
\end{minipage}

\begin{minipage}{.2\textwidth}
\centering
\includegraphics[scale=.25]{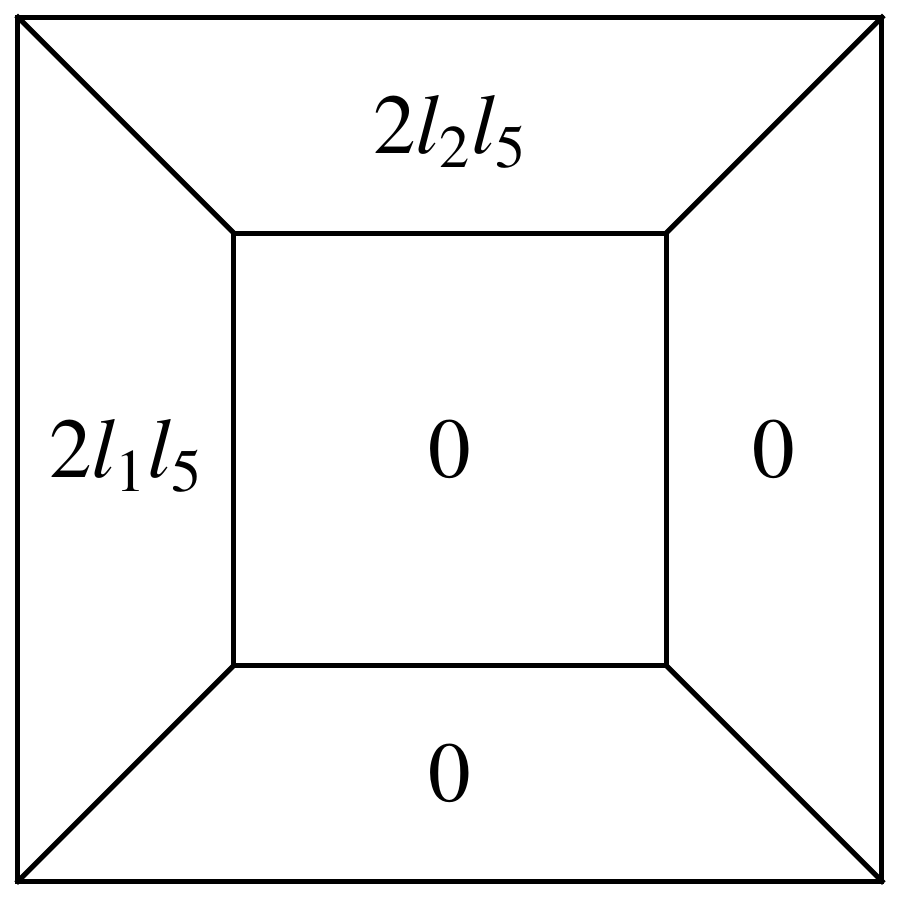}
\end{minipage}
\begin{minipage}{.02\textwidth}
\centering
$+$
\end{minipage}
\begin{minipage}{.2\textwidth}
\centering
\includegraphics[scale=.25]{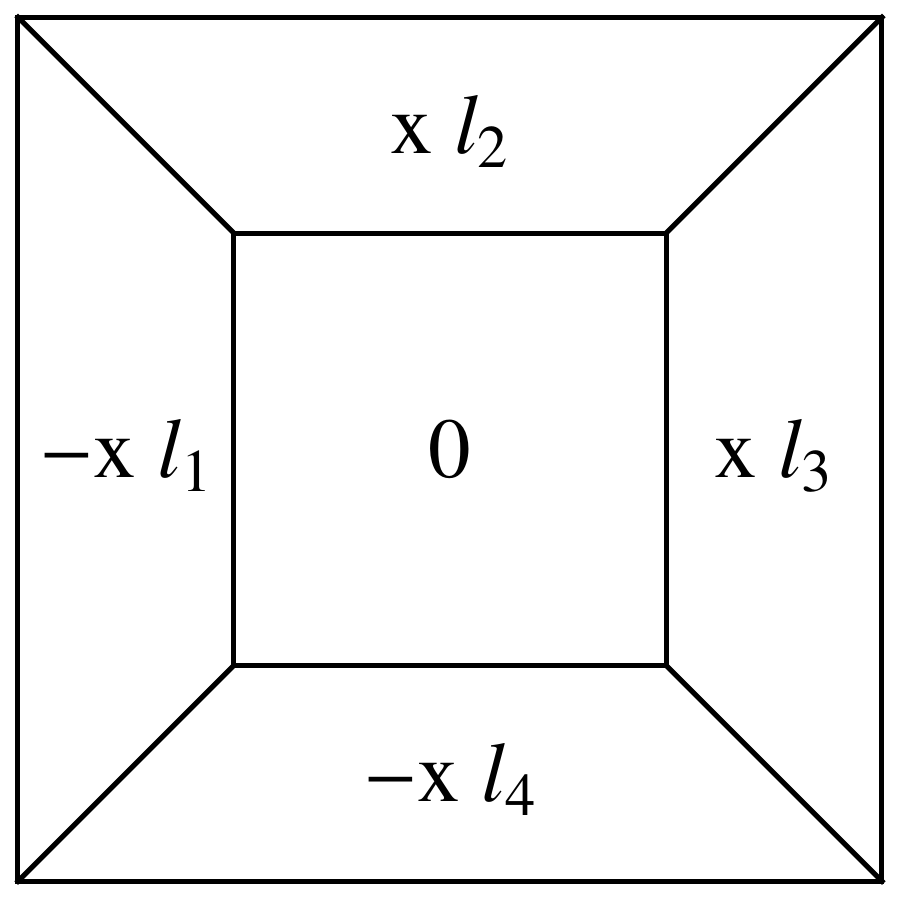}
\end{minipage}
\begin{minipage}{.02\textwidth}
\centering
$+$
\end{minipage}
\begin{minipage}{.2\textwidth}
\centering
\includegraphics[scale=.25]{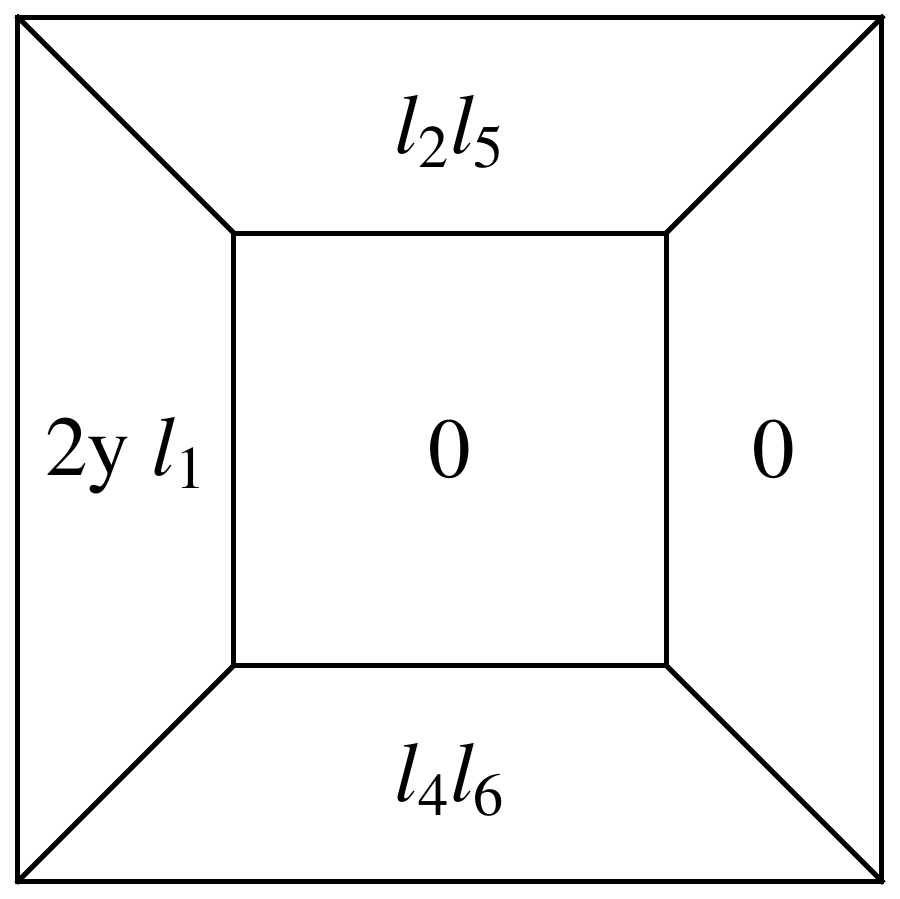}
\end{minipage}
\begin{minipage}{.02\textwidth}
\centering
$+$
\end{minipage}
\begin{minipage}{.2\textwidth}
\centering
\includegraphics[scale=.25]{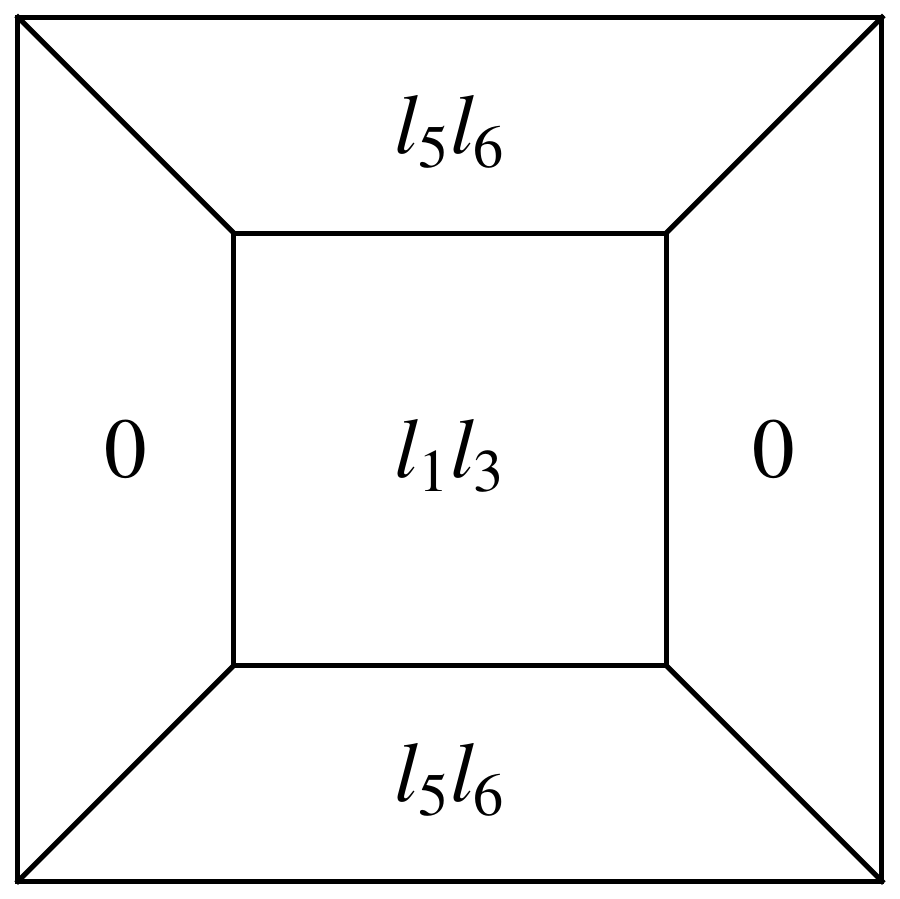}
\end{minipage}
\end{flushleft}
\caption{Decomposition of $x^2\cdot \mathbf{1}\in C^0_2(\QC)$}\label{x2decomp}
\end{figure}
\end{exm}

In \cite{HongDong},\cite{SuperSpline} and \cite{LocSup}, it is shown that in the bivariate simplicial case there exists a star-supported basis for $C^r_d(\Delta)$ if $d\geq 3r+2$ and in the trivariate simplicial case for $d>8r$.  These bases are explicitly constructed using the Bernstein-Bezier method.  Finding how large $d$ must be in order to obtain a lattice-supported basis for $C^r_d(\PC)$ is a difficult question.  The results in the bivariate and trivariate simplicial case suggest that the vector spaces $LS^{r,n}_d(\widehat{\PC})$ and $C^r_d(\widehat{\PC})$ begin to agree at a value of $d$ which is a linear function of $r$.  We address this in the planar polytopal case in $\S 5$ and relate this question to bounding the value of $d$ for which known dimension formulas for $C^r_d(\PC)$ hold.  Before progressing to this, we show how the poset $L_\PC$ can be refined to give a cleaner description of $LS^{r,k}(\PC)$.

\subsection{Refining posets for $LS^{r,k}(\PC)$}

In this section we seek a minimal set of submodules of the form $C^r_\QC(\PC)$ which generate $LS^{r,k}(\PC)$.  Observe that if $\QC\subseteq\OC$ are subcomplexes of $\PC$ then $C^r_\QC(\PC)\subseteq C^r_\OC(\PC)$.  So if there is containment among subcomplexes which are the support of the summands appearing in $LS^{r,k}(\PC)$ then we may discard one of the summands.  This suggests that while $L_\PC$ is quite useful for describing localization of $C^r(\PC)$, there is a more useful poset to consider for understanding $LS^{r,k}(\PC)$.  This is the poset $\Gamma_\PC$ whose elements are subcomplexes $\QC$ of $\PC$ which are a component of $\PC_W$ for some $W\in L_{\widehat{\PC}}$, ordered by inclusion.  As we will see, $\Gamma_\PC$ may be quite different from $L_{\widehat{\PC}}$.

Define a function $f_\Gamma$ on $\Gamma_\PC$ by 
\[
f_\Gamma(\QC)=\left\lbrace
\begin{array}{ll}
\mbox{codim}\left(\bigcap\limits_{\tau\in\QC^0_{n-1}}\mbox{aff}(\tau)\right) & \mbox{if }\QC^0_{n-1}\neq\emptyset \\
0 & \mbox{if }\QC^0_{n-1}=\emptyset
\end{array}
\right.,
\]
or equivalently 
\[
f_\Gamma(\QC)=\min\{\mbox{rk}(V)| V\in L_{\widehat{\PC}}, \QC\mbox{ a component of }\PC_V\}.
\]
$f_\Gamma$ is increasing in the sense that if $\OC\subsetneq\QC$ then $f_\Gamma(\OC)<f_\Gamma(\QC)$.  We call $f_\Gamma(\OC)$ the $\Gamma$-rank of $\OC$.  Let $\Gamma^k_\PC$ be the poset formed by $\{\QC\in\Gamma|f_\Gamma(\QC)\leq k\}$ and for a poset $L$ let $L^{\max}$ denote the maximal elements of $L$.  Then we have
\begin{prop}\label{Gamma}
$LS^{r,k}(\PC)=\sum\limits_{\QC\in\Gamma^{k,\max}_\PC} C^r_\QC(\PC)$.
\end{prop}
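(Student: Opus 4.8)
The plan is to prove the two inclusions separately, using the observation that the summands $C^r_\QC(\PC)$ are monotone in $\QC$ (if $\QC\subseteq\OC$ then $C^r_\QC(\PC)\subseteq C^r_\OC(\PC)$). For the inclusion $\sum_{\QC\in\Gamma^{k,\max}_\PC} C^r_\QC(\PC) \subseteq LS^{r,k}(\PC)$, I would observe that each maximal element $\QC\in\Gamma^{k,\max}_\PC$ has $f_\Gamma(\QC)\le k$, so by the equivalent description $f_\Gamma(\QC)=\min\{\mbox{rk}(V)\mid V\in L_{\widehat{\PC}},\ \QC\text{ a component of }\PC_V\}$ there is some $V\in L_{\widehat{\PC}}$ with $\mbox{rk}(V)\le k$ of which $\QC$ is a component; hence $C^r_\QC(\PC)$ is one of the summands $C^r_{\PC^i_V}(\PC)$ appearing inside $C^r_V(\PC)$, which in turn appears in $LS^{r,k}(\PC)$ since $\mbox{rk}(V)\le k$.

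For the reverse inclusion $LS^{r,k}(\PC)\subseteq\sum_{\QC\in\Gamma^{k,\max}_\PC} C^r_\QC(\PC)$, I would take an arbitrary summand $C^r_{\PC^i_W}(\PC)$ of $LS^{r,k}(\PC)$, where $W\in L_{\widehat{\PC}}$ has $\mbox{rk}(W)\le k$ and $\PC^i_W$ is a component of $\PC_W$. By definition $\PC^i_W\in\Gamma_\PC$, and $f_\Gamma(\PC^i_W)\le\mbox{rk}(W)\le k$, so $\PC^i_W\in\Gamma^k_\PC$. Since $\Gamma^k_\PC$ is finite and $f_\Gamma$ is strictly increasing on chains, every element of $\Gamma^k_\PC$ lies below (or equals) some maximal element $\QC\in\Gamma^{k,\max}_\PC$; thus $\PC^i_W\subseteq\QC$ for some $\QC\in\Gamma^{k,\max}_\PC$, giving $C^r_{\PC^i_W}(\PC)\subseteq C^r_\QC(\PC)\subseteq\sum_{\QC'\in\Gamma^{k,\max}_\PC} C^r_{\QC'}(\PC)$. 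Summing over all such $W$ and $i$ yields the desired containment.

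The only real subtlety is verifying the equivalence of the two displayed formulas for $f_\Gamma$ — that the codimension of $\bigcap_{\tau\in\QC^0_{n-1}}\mbox{aff}(\tau)$ equals $\min\{\mbox{rk}(V)\mid V\in L_{\widehat{\PC}},\ \QC\text{ a component of }\PC_V\}$ — and that $f_\Gamma$ is genuinely strictly increasing on chains (which is needed so that maximal elements exist above every element and dominate the picture). The strict monotonicity follows because if $\OC\subsetneq\QC$ are both components of lattice complexes, then $\QC$ has strictly more interior codimension-one faces whose affine spans must be intersected — passing from $\OC$ to $\QC$ adds at least one new hyperplane to the intersection, which (since $\OC$ was already a flat of the arrangement and the new face's affine span does not contain it, else $\OC$ would not be a maximal connected piece) strictly increases the codimension. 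This compatibility of the combinatorial poset structure on $\Gamma_\PC$ with the rank data from $L_{\widehat{\PC}}$ is where I expect to spend the most care; the rest is bookkeeping with the monotonicity of $\QC\mapsto C^r_\QC(\PC)$.
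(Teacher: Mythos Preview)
Your proposal is correct and follows essentially the same route as the paper: show each component $\PC^i_W$ with $\mbox{rk}(W)\le k$ lies in $\Gamma^k_\PC$ and hence under some maximal element, then use monotonicity of $\QC\mapsto C^r_\QC(\PC)$. The paper is terser---it writes out only the $\subseteq$ direction and takes both the equivalence of the two formulas for $f_\Gamma$ and its strict monotonicity as already established in the paragraph preceding the proposition---so the extra verification you flag is not part of the proposition's proof proper.
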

\begin{proof}
$C^r_W(\PC)=\sum_{i=1}^m C^r_{\PC^i_W}(\PC)$ by definition, where $\PC^1_W,\ldots,\PC^m_W$ are the components of $\PC_W$.  If $\mbox{rk}(W)\leq k$, then $f_\Gamma(\PC^i_W)\leq k$ for all components $\PC^i_W$ of $\PC_W$.  Hence $\PC^i_W\in \Gamma^k_\PC$ and $\PC^i_W\subset \QC$ for some $\QC\in\Gamma^{k,\max}_\PC$.  Since this holds for all components $\PC^i_W$ of $\PC_W$, $C^r_W(\PC)\subset \sum_{\QC\in\Gamma^{k,\max}_\PC} C^r_\QC(\PC)$ and we are done.
\end{proof}

\begin{exm}\label{Gam}
Let $\QC$ and $\QC'$ be as in Figures~\ref{SEVL} and~\ref{DSEVL}.  Label the facets of $\QC$ and $\QC'$ by $A,B,C,D,E$ as shown in Figure~\ref{GamDi}.  The Hasse diagrams of $L_{\widehat{\QC}}$ and $\Gamma_\QC$ are shown in Figure~\ref{GamDi} organized according to rank and $\Gamma$-rank, respectively.  For $L_{\widehat{\QC}}$ we use the labels assigned to the flats in Example~\ref{SchCube}.  We label the complexes in $\Gamma_\QC$ and $\Gamma_{\QC'}$ by listing the their facet labels.  Figures~\ref{cx},~\ref{xi} show the complexes $\Gamma^{2,\max}_\QC$.  Figure~\ref{dcx} shows the subcomplexes of $\Gamma^{2,\max}_{\QC'}$ which are not stars of vertices.
\begin{figure}[htp]
\captionsetup[subfigure]{labelformat=empty}
\centering
\subfloat[$\QC$]{
\includegraphics[scale=.4]{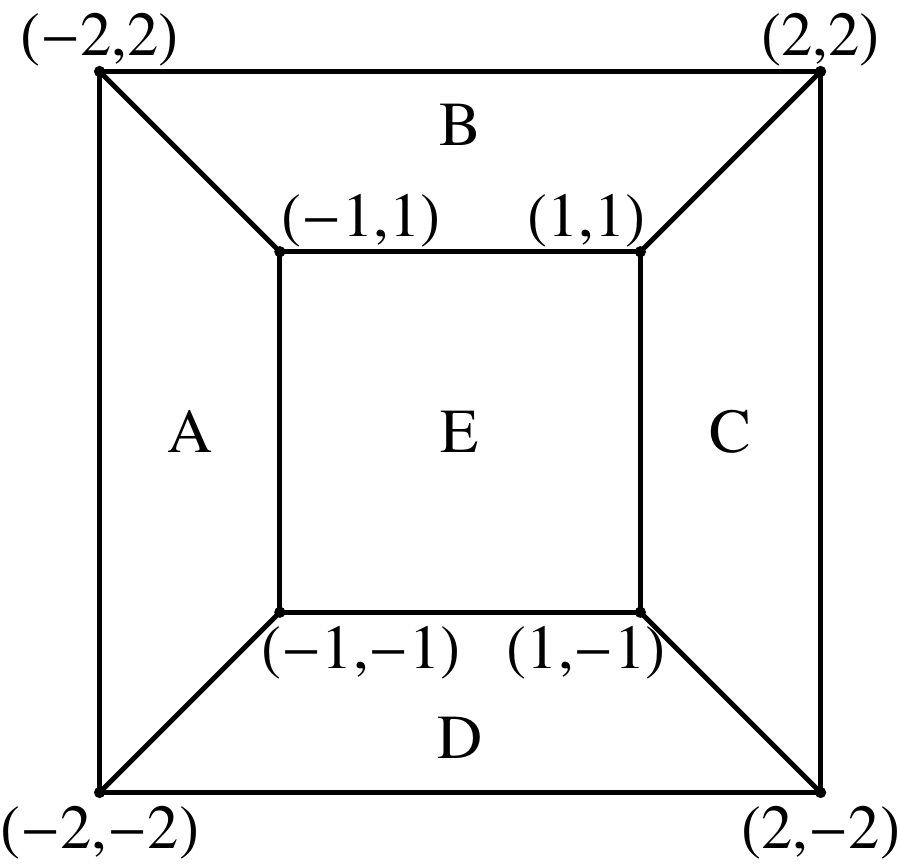}
}
\subfloat[$\QC'$]{
\includegraphics[scale=.4]{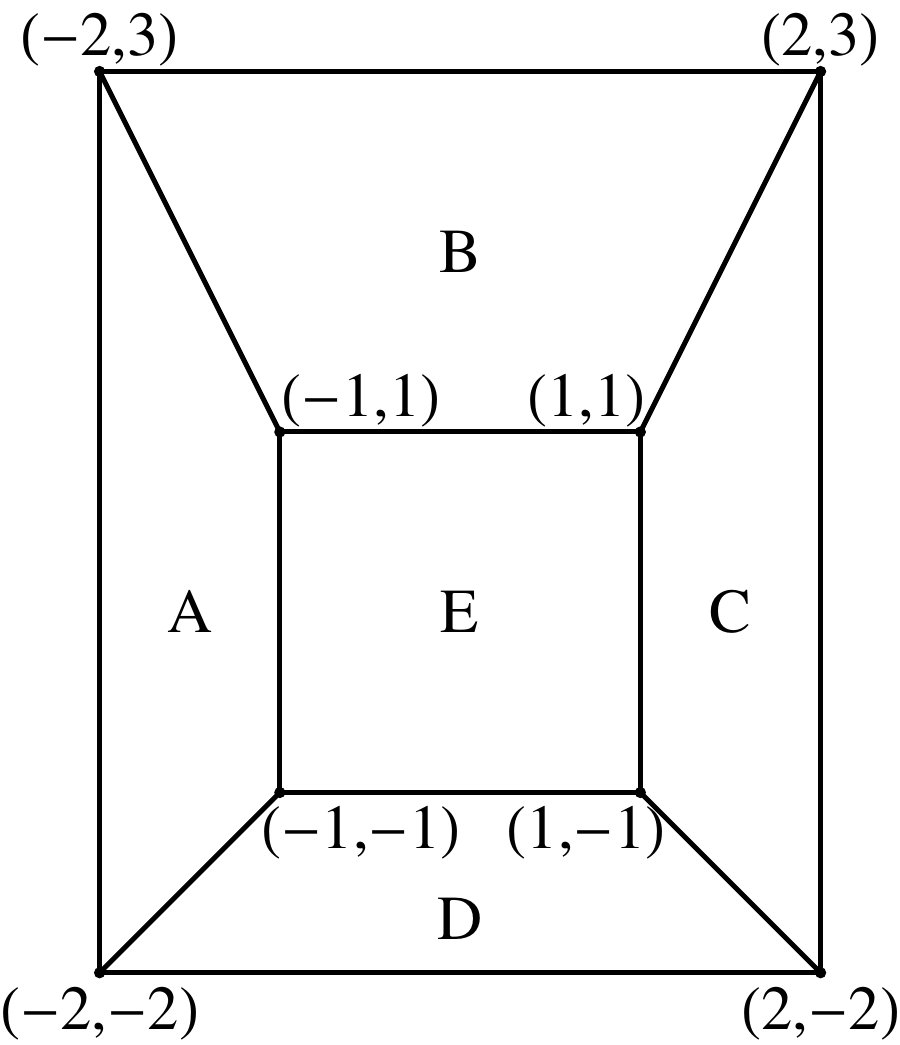}
}

\subfloat[$L_{\widehat{\QC}}$]{
\begin{tikzpicture}[scale=.6]
\node (one) at (0,-2) {$\R^2$};
\node (1) at (-5,0) {$L1$};
\node (2) at (-3,0) {$L2$};
\node (3) at (-1,0) {$L3$};
\node (4) at (1,0) {$L4$};
\node (5) at (3,0) {$L5$};
\node (6) at (5,0) {$L6$};
\node (x) at (-5,2) {$x$};
\node (y) at (-10/3,2) {$y$};
\node (z) at (-5/3,2) {$z$};
\node (w) at (0,2) {$w$};
\node (xi) at (5/3,2) {$\xi$};
\node (alph) at (10/3,2) {$\alpha$};
\node (bet) at (5,2) {$\beta$};
\draw (1) -- (one) -- (2)--(x)--(1);
\draw (3) -- (one) -- (4)--(z)--(3);
\draw (5) -- (one) -- (6)--(xi)--(5);
\draw (1)--(y)--(4);
\draw (2)--(w)--(3);
\draw (y)--(5);
\draw (w)--(5);
\draw (z)--(6);
\draw (x)--(6);
\draw (2)--(alph)--(4);
\draw (1)--(bet)--(3);
\end{tikzpicture}
}

\subfloat[$\Gamma_\QC$]{
\begin{tikzpicture}[scale=.6]
\node (A) at (-2,-2) {A};
\node (B) at (-1,-2) {B};
\node (C) at (0,-2) {C};
\node (D) at (1,-2) {D};
\node (E) at (2,-2) {E};
\node (AB) at (-5.25,0) {AB};
\node (BC) at (-3.75,0) {BC};
\node (CD) at (-2.25,0) {CD};
\node (AD) at (-.75,0) {AD};
\node (AE) at (.75,0) {AE};
\node (BE) at (2.25,0) {BE};
\node (CE) at (3.75,0) {CE};
\node (DE) at (5.25,0) {DE};
\node (ABE) at (-5.25,2) {ABE};
\node (BCE) at (-3.5,2) {BCE};
\node (CDE) at (-1.75,2) {CDE};
\node (ADE) at (0,2) {ADE};
\node (ABCD) at (1.75,2) {ABCD};
\node (AEC) at (3.5,2) {AEC};
\node (BED) at (5.25,2) {BED};
\draw (A) -- (AB) -- (B)--(BC)--(C)--(CD)--(D)--(AD)--(A)--(AE)--(E)--(BE)--(B);
\draw (C) -- (CE) -- (E);
\draw (D)-- (DE)--(E);
\draw (AB)--(ABE)--(AE)--(ADE)--(AD)--(ABCD)--(BC)--(BCE)--(BE)--(ABE);
\draw (AB)--(ABCD)--(CD)--(CDE)--(CE)--(BCE);
\draw (CDE)--(DE)--(ADE);
\draw (BE)--(BED)--(DE);
\draw (AE)--(AEC)--(CE);
\end{tikzpicture}
}

\subfloat[$\Gamma_{\QC'}$]{
\begin{tikzpicture}[scale=.6]
\node (A) at (-2,-2) {A};
\node (B) at (-1,-2) {B};
\node (C) at (0,-2) {C};
\node (D) at (1,-2) {D};
\node (E) at (2,-2) {E};
\node (AB) at (-7,0) {AB};
\node (BC) at (-5,0) {BC};
\node (CD) at (-3,0) {CD};
\node (AD) at (-1,0) {AD};
\node (AE) at (1,0) {AE};
\node (BE) at (3,0) {BE};
\node (CE) at (5,0) {CE};
\node (DE) at (7,0) {DE};
\node (ABE) at (-9,2.5) {ABE};
\node (BCE) at (-7,2.5) {BCE};
\node (CDE) at (-5,2.5) {CDE};
\node (ADE) at (-3,2.5) {ADE};
\node (ABC) at (-1,2.5) {ABC};
\node (BCD) at (1,2.5) {BCD};
\node (ACD) at (3,2.5) {ACD};
\node (ABD) at (5,2.5) {ABD};
\node (AEC) at (7,2.5) {AEC};
\node (BED) at (9,2.5) {BED};
\draw (A) -- (AB) -- (B)--(BC)--(C)--(CD)--(D)--(AD)--(A)--(AE)--(E)--(BE)--(B);
\draw (C) -- (CE) -- (E);
\draw (D)-- (DE)--(E);
\draw (AB)--(ABE)--(AE)--(ADE)--(AD)--(ABD)--(AB);
\draw (BC)--(BCE)--(BE)--(ABE);
\draw (CD)--(CDE)--(CE)--(BCE);
\draw (AD)--(ADE)--(DE)--(CDE);
\draw (BC)--(BCD)--(CD);
\draw (AB)--(ABC)--(BC);
\draw (CD)--(ACD)--(AD);
\draw (BE)--(BED)--(DE);
\draw (AE)--(AEC)--(CE);
\end{tikzpicture}
}
\caption{Example~\ref{Gam} - $\Gamma_\QC$ and $\Gamma_{\QC'}$}\label{GamDi}
\end{figure}

By Proposition~\ref{Gamma} and Theorem~\ref{main}, $C^r_k(\QC)$ and $C^r_k(\QC')$ have a basis of splines which vanish outside of the complexes $\Gamma^{\max}_\QC$ and $\Gamma^{\max}_{\QC'}$, respectively, for $k\gg 0$.  This proves the claims made in the Example~\ref{EX1}.
\end{exm}

Now suppose $\PC=\Delta$ is simplicial.  Proposition~\ref{star} shows that $\Gamma_\Delta$ is the poset of stars of faces $\tau$ so that $\mbox{aff}(\tau)$ appears in $L_\Delta$.  Every star in $\Gamma^{k,\max}_\Delta$ is contained in the star of a face $\tau\in\Delta_{n-k}$, so we obtain the following corollary to Proposition~\ref{Gamma}.
\begin{cor}\label{LSimp}
$LS^{r,k}(\Delta):=\sum_{\tau\in\Delta_{n-k}} C^r_{\mbox{st}(\tau)}(\Delta).$
\end{cor}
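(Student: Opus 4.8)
The plan is to compare both sides with the presentation $LS^{r,k}(\Delta)=\sum_{\QC\in\Gamma^{k,\max}_\Delta}C^r_\QC(\Delta)$ from Proposition~\ref{Gamma}, and to prove the two resulting inclusions separately. Throughout I use the description of $\Gamma_\Delta$ recorded just before the statement: by Proposition~\ref{star}, every element of $\Gamma_\Delta$ is a star $\mbox{st}(\sigma)$ with $\mbox{aff}(\sigma)\in L_\Delta$, and if $\sigma$ is normalized to be the intersection of the facets of $\mbox{st}(\sigma)$ then $\mbox{aff}(\sigma)=\bigcap_{\eta\in(\mbox{st}(\sigma))^0_{n-1}}\mbox{aff}(\eta)$ (when this set is nonempty), so that $f_\Gamma(\mbox{st}(\sigma))=\mbox{codim}(\mbox{aff}(\sigma))=n-\dim\sigma$; this is the content of the proof of Lemma~\ref{starcomp}.

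For the inclusion $LS^{r,k}(\Delta)\subseteq\sum_{\tau\in\Delta_{n-k}}C^r_{\mbox{st}(\tau)}(\Delta)$: given $\QC\in\Gamma^{k,\max}_\Delta$, write $\QC=\mbox{st}(\sigma)$ with $\sigma$ normalized as above, so $\dim\sigma=n-f_\Gamma(\QC)\geq n-k$. Choosing a face $\tau\subseteq\sigma$ with $\dim\tau=n-k$ gives $\mbox{st}(\sigma)\subseteq\mbox{st}(\tau)$, hence $C^r_\QC(\Delta)\subseteq C^r_{\mbox{st}(\tau)}(\Delta)$; summing over $\QC$ and invoking Proposition~\ref{Gamma} yields the inclusion. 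This is the observation already indicated in the paragraph preceding the statement.

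For the reverse inclusion, fix $\tau\in\Delta_{n-k}$ and let $\tau^{+}$ denote the intersection of all facets of $\Delta$ containing $\tau$; it is a face of $\Delta$ with $\tau\subseteq\tau^{+}$, $\mbox{st}(\tau)=\mbox{st}(\tau^{+})$, and $\tau^{+}$ equal to the intersection of the facets of $\mbox{st}(\tau^{+})$. The key point is that $\mbox{aff}(\tau^{+})\in L_\Delta$. If $\tau$ lies in a unique facet $\sigma$, then $\tau^{+}=\sigma$, $\mbox{aff}(\tau^{+})=\R^n$ is the rank-$0$ flat of $L_\Delta$, and $\mbox{st}(\tau)=\mbox{st}(\sigma)$ is a component of $\Delta_{\R^n}$ by Proposition~\ref{star}. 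Otherwise every $\eta\in(\mbox{st}(\tau))^0_{n-1}$ is the intersection of two facets through $\tau^{+}$, hence contains $\tau^{+}$; iterating Lemma~\ref{simplex} along the edges of a spanning tree of the dual graph $G(\mbox{st}(\tau))$ — connected since $\Delta$ is hereditary — exactly as in the proof of Lemma~\ref{starcomp}, one obtains $\bigcap_{\eta\in(\mbox{st}(\tau))^0_{n-1}}\mbox{aff}(\eta)=\mbox{aff}(\tau^{+})$, a nonempty intersection of hyperplanes of $\A(\Delta)$ (each such $\eta$ lies in two facets of $\Delta$, so is interior), whence $\mbox{aff}(\tau^{+})\in L_\Delta$. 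In either case $\tau^{+}\in S(\mbox{aff}(\tau^{+}))$ (a proper face of $\tau^{+}$ has strictly smaller affine span), so $\mbox{st}(\tau)=\mbox{st}(\tau^{+})$ is a component of $\Delta_{\mbox{aff}(\tau^{+})}$ by Proposition~\ref{star}; thus $\mbox{st}(\tau)\in\Gamma_\Delta$ with $f_\Gamma(\mbox{st}(\tau))\leq\mbox{codim}(\mbox{aff}(\tau^{+}))=n-\dim\tau^{+}\leq n-\dim\tau=k$. Therefore $\mbox{st}(\tau)\in\Gamma^k_\Delta$, so $\mbox{st}(\tau)\subseteq\QC$ for some $\QC\in\Gamma^{k,\max}_\Delta$, and $C^r_{\mbox{st}(\tau)}(\Delta)\subseteq C^r_\QC(\Delta)\subseteq LS^{r,k}(\Delta)$ by Proposition~\ref{Gamma}. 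Summing over $\tau\in\Delta_{n-k}$ completes the proof.

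The routine inclusion is the first one, essentially spelled out in the text before the statement. The part requiring care is the second, and within it the identity $\mbox{aff}(\tau^{+})=\bigcap_\eta\mbox{aff}(\eta)\in L_\Delta$: for a general $\tau\in\Delta_{n-k}$ the affine span $\mbox{aff}(\tau)$ itself need not lie in $L_\Delta$, so one must pass to the larger face $\tau^{+}$, whose star coincides with that of $\tau$ and whose affine span is recovered as the intersection of the affine spans of the interior codimension-one faces of $\mbox{st}(\tau)$. This is where the simplicial hypothesis (through Lemma~\ref{simplex}) and heredity genuinely enter; it is a direct reprise of the walk argument in the proof of Lemma~\ref{starcomp}, so I expect only bookkeeping rather than a real obstacle.
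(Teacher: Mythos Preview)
Your proof is correct and, for the inclusion $LS^{r,k}(\Delta)\subseteq\sum_{\tau\in\Delta_{n-k}}C^r_{\mbox{st}(\tau)}(\Delta)$, follows exactly the paper's one-sentence argument preceding the corollary (every $\QC\in\Gamma^{k,\max}_\Delta$ is $\mbox{st}(\sigma)$ with $\dim\sigma\ge n-k$, hence sits inside $\mbox{st}(\tau)$ for some $(n-k)$-face $\tau\subseteq\sigma$). The paper does not write out the reverse inclusion at all, so your careful passage from $\tau$ to $\tau^{+}$ and verification that $\mbox{st}(\tau)=\mbox{st}(\tau^{+})\in\Gamma^k_\Delta$ via the walk argument of Lemma~\ref{starcomp} is a genuine addition rather than a divergence---and a necessary one, since as you note $\mbox{aff}(\tau)$ itself need not lie in $L_\Delta$.
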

Setting $k=n$ and applying Theorem~\ref{main}, we obtain the existence of a star-supported basis for $C^r_d(\Delta)$ in any dimension.
\begin{cor}
Let $\Delta\subset\R^n$ be a pure, $n$-dimensional, hereditary simplicial complex.  Then $C^r_d(\Delta)$ has a basis consisting of splines supported on the star of a vertex for $d\gg 0$.
\end{cor}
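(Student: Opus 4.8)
The plan is to obtain this as an immediate consequence of Theorem~\ref{main}(2) together with Corollary~\ref{LSimp} specialized to $k=n$. First I would record that in the simplicial case Corollary~\ref{LSimp} with $k=n$ reads
\[
LS^{r,n}(\Delta)=\sum_{\tau\in\Delta_0} C^r_{\mbox{st}(\tau)}(\Delta),
\]
the sum running over the vertices of $\Delta$, since $\Delta_{n-k}=\Delta_0$ when $k=n$. The reason only vertex stars appear is that every star $\mbox{st}(\gamma)$ occurring among the maximal elements of $\Gamma_\Delta$ is contained in $\mbox{st}(v)$ for any vertex $v$ of $\gamma$: any simplex $\sigma$ containing $\gamma$ also contains $v$, so by the definition of the star $\mbox{st}(\gamma)\subseteq\mbox{st}(v)$, and hence $C^r_{\mbox{st}(\gamma)}(\Delta)\subseteq C^r_{\mbox{st}(v)}(\Delta)$. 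Passing to degree-$d$ pieces gives $LS^{r,n}_d(\Delta)=\sum_{\tau\in\Delta_0}C^r_{\mbox{st}(\tau),d}(\Delta)$, where each summand $C^r_{\mbox{st}(\tau),d}(\Delta)=C^r_{\mbox{st}(\tau)}(\Delta)\cap C^r_d(\Delta)$ consists precisely of the splines of degree $\le d$ that vanish on every facet not belonging to $\mbox{st}(\tau)$.

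Next I would apply Theorem~\ref{main}(2), which gives $LS^{r,n}_d(\Delta)=C^r_d(\Delta)$ for $d\gg0$. Combining this with the displayed identity above, for all sufficiently large $d$ the finite-dimensional vector space $C^r_d(\Delta)$ is spanned by the union $\bigcup_{\tau\in\Delta_0} C^r_{\mbox{st}(\tau),d}(\Delta)$ of finitely many of its subspaces, each consisting of splines supported on the star of a single vertex.

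Finally I would extract a basis by the usual procedure: pick an $\R$-basis of each $C^r_{\mbox{st}(\tau),d}(\Delta)$, form the union of these finitely many bases to get a finite spanning set of $C^r_d(\Delta)$, and delete redundant elements one at a time until a basis of $C^r_d(\Delta)$ remains. Every surviving basis element lies in some $C^r_{\mbox{st}(\tau),d}(\Delta)$, hence is supported on the star of a vertex, which is exactly the assertion.

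I do not expect any substantive obstacle here; the real content is packaged in Theorem~\ref{main} and Corollary~\ref{LSimp}, both already available. The only points that need a careful word are the reduction from arbitrary stars $\mbox{st}(\gamma)$ to vertex stars (the containment $\mbox{st}(\gamma)\subseteq\mbox{st}(v)$, immediate from the definition of the star) and the passage from a spanning family of subspaces to an honest basis whose members each lie in one of those subspaces, which is standard linear algebra using that $C^r_d(\Delta)$ is finite-dimensional.
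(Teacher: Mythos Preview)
Your proposal is correct and follows the same route as the paper: the paper derives the corollary in one line by ``setting $k=n$ and applying Theorem~\ref{main},'' i.e., combining Corollary~\ref{LSimp} at $k=n$ with Theorem~\ref{main}(2), which is exactly what you do with the routine details (passage to degree-$d$ pieces and extraction of a basis from a spanning set) spelled out.
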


\section{Lattice-Supported Splines and the McDonald-Schenck Formula}
In this section we address the question of when $\mbox{dim}_\R C^r_d(\PC)$ becomes polynomial, particularly in the planar case where these polynomials have been computed by Alfeld, Schumaker, McDonald, and Schenck (\cite{AS3r} and \cite{TSchenck08}).  Rephrased, this is a question about when the Hilbert function $HF(C^r(\widehat{\PC}),d)$ of the graded module $C^r(\widehat{\PC})$ agrees with the Hilbert polynomial $HP(C^r(\widehat{\PC}),d)$.  We give an indication of how we may use lattice-supported splines to address this problem and also give conjectural bounds on $d$ for when $HF(C^r(\widehat{\PC}),d)=HP(C^r(\widehat{\PC}),d)$ in the case $\PC\subset\R^2$.

There is a convenient notion for discussing when the Hilbert function $HF(M,d)$ of a graded module $M$ over $R=\R[x_1,\ldots,x_n]$ agrees with the Hilbert polynomial $HP(M,d)$, namely the \textit{regularity} of $M$, denoted $\mbox{reg}(M)$.  Let $F_\bullet\rightarrow M$ be a minimal free graded resolution of $M$, with $F_i=\bigoplus\limits_{j} R(-a_{ij})$.  Then
\[
\mbox{reg}(M):=\max\limits_{i,j} \{a_{ij}-i\} 
\]
The relevant facts about $\mbox{reg}(M)$ may be found in \cite{Eis}, chapter 4 and appendix A.  Regularity relates to the Hilbert function becoming polynomial via Theorem 4.2 of \cite{Eis}:
\begin{thm}\label{HFHP}
Let $M$ be a finitely generated graded module over $S=K[x_0,x_1,\ldots,x_n]$ ($K$ a field) of projective dimension $\delta$.  Then $HF(M,d)=HP(M,d)$ for $d\ge \mbox{reg}(M)+\delta-n$.
\end{thm}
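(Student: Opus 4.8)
The plan is to derive Theorem~\ref{HFHP} directly from a minimal graded free resolution of $M$, using that both the Hilbert function and the Hilbert polynomial are additive along exact sequences, together with the explicit Hilbert function of a twisted free module.

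First I would settle the base case $\delta = 0$, i.e. $M$ free. Since $S = K[x_0,\ldots,x_n]$ has $n+1$ variables, $HF(S,d) = \binom{d+n}{n}$ for $d \ge 0$ and $HF(S,d) = 0$ for $d < 0$, whereas $HP(S,d) = \binom{d+n}{n}$ as a polynomial, whose roots are $d = -1, -2, \ldots, -n$. Hence $HF(S,d) = HP(S,d)$ exactly for $d \ge -n$. Twisting, $HF(S(-a),d) = HF(S,d-a) = HP(S(-a),d)$ for all $d \ge a-n$. As $\mbox{reg}(S(-a)) = a$ and $\mbox{pd}(S(-a)) = 0$, this is precisely the asserted bound $d \ge \mbox{reg} + \mbox{pd} - n$, so the theorem holds when $\delta = 0$.

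For the general case, take a minimal graded free resolution
\[
0 \to F_\delta \to \cdots \to F_1 \to F_0 \to M \to 0, \qquad F_i = \bigoplus_j S(-a_{ij}).
\]
Graded exactness gives $HF(M,d) = \sum_{i=0}^\delta (-1)^i HF(F_i,d)$ for every $d$, and the same alternating-sum identity for Hilbert polynomials (a polynomial identity valid for $d \gg 0$, hence identically). Subtracting,
\[
HF(M,d) - HP(M,d) = \sum_{i=0}^\delta (-1)^i \sum_j \bigl( HF(S(-a_{ij}),d) - HP(S(-a_{ij}),d) \bigr),
\]
and by the base case each inner difference vanishes as soon as $d \ge a_{ij} - n$. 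The definition of regularity gives $a_{ij} - i \le \mbox{reg}(M)$, hence $a_{ij} - n \le \mbox{reg}(M) + i - n \le \mbox{reg}(M) + \delta - n$ since $0 \le i \le \delta$. Therefore, once $d \ge \mbox{reg}(M) + \delta - n$, every term on the right side vanishes and $HF(M,d) = HP(M,d)$.

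There is no serious obstacle here; the only place to be careful is the bookkeeping of the base case — recognizing that $S$ has $n+1$ variables, so the Hilbert polynomial of $S$ vanishes exactly at $d = -1, \ldots, -n$, which produces the ``$-n$'' in the bound — and then checking that the twists $a_{ij}$ appearing anywhere in the resolution are controlled \emph{simultaneously} by $\mbox{reg}(M)$ (via $a_{ij} \le \mbox{reg}(M) + i$) and by the length $\delta$ of the resolution. Alternatively one could argue through local cohomology, using that $\mbox{reg}(M)$ bounds the top nonvanishing degree of $H^i_{\mathfrak{m}}(M)$ by $\mbox{reg}(M) - i$ together with the Grothendieck--Serre identity $HF(M,d) - HP(M,d) = \sum_i (-1)^i \dim_K H^i_{\mathfrak{m}}(M)_d$; but the free-resolution route is the most elementary and is the one I would write up.
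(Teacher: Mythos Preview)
Your proof is correct and is essentially the standard argument (indeed, it is the proof given in Eisenbud's \emph{The Geometry of Syzygies}, Theorem~4.2, which is exactly what the paper cites). Note, however, that the paper does not itself prove Theorem~\ref{HFHP}: it simply quotes the result from \cite{Eis} and applies it to $C^r(\widehat{\PC})$, so there is no ``paper's own proof'' to compare against beyond the reference.
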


We apply this theorem to $C^r(\widehat{\PC})$.  First, since $\PC$ is hereditary, $C^r(\widehat{\PC})$ is the kernel of a matrix and hence a second syzygy (see \cite{DimSeries}).  $C^r(\widehat{\PC})$ is a module over $S=\R[x_0,\ldots,x_n]$, so since $C^r(\widehat{\PC})$ is a second syzygy, $\mbox{pd}(C^r(\widehat{\PC}))\le n-1$.  By Theorem~\ref{HFHP}, $HF(C^r(\widehat{\PC}),d)=HP(C^r(\widehat{\PC}),d)$ for $d\ge \mbox{reg}(C^r(\widehat{\PC}))-1$.  It turns out that the Alfeld-Schumaker result for generic simplicial $\Delta\subset\R^2$ that $HF(C^r(\widehat{\Delta}),d)=HP(C^r(\widehat{\Delta}),d)$ for $d\ge 3r+1$ \cite{AS3r} is equivalent to $\mbox{reg}(C^r(\widehat{\Delta}))\le 3r+2$.  Schenck conjectures a tightening of this bound, namely $HF(C^r(\widehat{\Delta}),d)=HP(C^r(\widehat{\Delta}),d)$ for $d\ge 2r+1$ \cite{Thesis}.  We will call this the $2r+1$ conjecture.  This is equivalent to $\mbox{reg}(C^r(\widehat{\Delta}))\le 2r+2$ (see \cite{CohVan} for the equivalence of these statements).  We now relate $\mbox{reg}(C^r(\widehat{\PC}))$ to $\mbox{reg}(LS^{r,n}(\widehat{\PC}))$.

Theorem~\ref{latticegens} provides the short exact sequence 
\[
0\rightarrow LS^{r,n}(\widehat{\PC}) \rightarrow C^r(\widehat{\PC}) \rightarrow C \rightarrow 0,
\]
where $C$ has finite length.  The following proposition identifies $C$ as a local cohomology module of $LS^{r,n}(\widehat{\PC})$.

\begin{prop}\label{regest}
Let $\PC\subset\R^n$ be a pure hereditary polytopal complex and $C$ be the cokernel of the inclusion $LS^{r,n}(\widehat{\PC})\hookrightarrow C^r(\widehat{\PC})$.  Then
\[
C\cong H^1_m(LS^{r,n}(\widehat{\PC})),
\]
where $H^1_m(LS^{r,n}(\widehat{\PC}))$ is the first local cohomology module of $LS^{r,n}(\widehat{\PC})$ with respect to $m=(x_0,\ldots,x_n)$, the homogeneous maximal ideal of $S=\R[x_0,\ldots,x_n]$.
\end{prop}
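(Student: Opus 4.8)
The plan is to run the long exact sequence in local cohomology attached to the short exact sequence
\[
0\rightarrow LS^{r,n}(\widehat{\PC}) \rightarrow C^r(\widehat{\PC}) \rightarrow C \rightarrow 0
\]
of Theorem~\ref{latticegens} (applied with $k=n$ to the central complex $\widehat{\PC}\subset\R^{n+1}$), and to extract the claimed isomorphism by showing that the relevant neighbouring terms vanish.  Applying $H^\bullet_m(-)$ with $m=(x_0,\ldots,x_n)$ produces the exact piece
\[
H^0_m(C^r(\widehat{\PC})) \rightarrow H^0_m(C) \rightarrow H^1_m(LS^{r,n}(\widehat{\PC})) \rightarrow H^1_m(C^r(\widehat{\PC})),
\]
so it suffices to identify these four modules.

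First I would pin down $H^\bullet_m(C)$.  By Theorem~\ref{latticegens}, $\mbox{codim}(C)\geq n+1=\mbox{dim}(S)$, and the primes of codimension $n+1$ in the support of $C$ lie among the $I(W)$ for $W\in L_{\widehat{\PC}}$ of rank $n+1$; since $\widehat{\PC}$ is central the only such flat is the origin, with $I(\mbox{origin})=m$.  Hence $C$ is supported only at $m$; that is, $C$ has finite length (as already noted above), so $H^0_m(C)=C$ while $H^i_m(C)=0$ for $i\geq 1$.  Next I would use that it was established above that $\mbox{pd}_S(C^r(\widehat{\PC}))\leq n-1$; by the Auslander--Buchsbaum formula $\mbox{depth}_S(C^r(\widehat{\PC}))=(n+1)-\mbox{pd}_S(C^r(\widehat{\PC}))\geq 2$, so $H^0_m(C^r(\widehat{\PC}))=H^1_m(C^r(\widehat{\PC}))=0$ (the degree-$0$ vanishing is anyway immediate from torsion-freeness, since $C^r(\widehat{\PC})$ is a submodule of a free $S$-module).

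Feeding these vanishings into the exact piece above collapses it to $0\rightarrow C\rightarrow H^1_m(LS^{r,n}(\widehat{\PC}))\rightarrow 0$, so the connecting homomorphism is the desired graded isomorphism $C\cong H^1_m(LS^{r,n}(\widehat{\PC}))$; one also reads off $H^0_m(LS^{r,n}(\widehat{\PC}))=0$, consistent with $LS^{r,n}(\widehat{\PC})\subseteq C^r(\widehat{\PC})$ being torsion-free.  I do not expect a genuine obstacle here: the argument is a formal consequence of the local cohomology long exact sequence once the two inputs --- finite length of $C$ and $\mbox{depth}_S C^r(\widehat{\PC})\geq 2$ --- are in place, and both are either stated above or follow at once from what is stated.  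The only point deserving a word of care is that the ambient ring $S=\R[x_0,\ldots,x_n]$ has dimension $n+1\geq 2$, which is exactly what makes ``$\mbox{depth}\geq 2$'' available and the argument go through.
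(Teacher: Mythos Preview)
Your argument is correct and rests on the same two inputs as the paper's proof: the vanishing $H^0_m(C^r(\widehat{\PC}))=H^1_m(C^r(\widehat{\PC}))=0$ (from $C^r(\widehat{\PC})$ being a second syzygy, equivalently of depth $\geq 2$) and the finite length of $C$. The paper packages these differently: rather than running the long exact sequence in local cohomology on $0\to LS^{r,n}(\widehat{\PC})\to C^r(\widehat{\PC})\to C\to 0$, it applies the four-term comparison sequence
\[
0\to H^0_m(M)\to M\to \Gamma(M)\to H^1_m(M)\to 0
\]
to $M=LS^{r,n}(\widehat{\PC})$, after noting that $\Gamma(LS^{r,n}(\widehat{\PC}))=\Gamma(C^r(\widehat{\PC}))=C^r(\widehat{\PC})$ (the first equality because the inclusion is an isomorphism away from $m$, the second from the depth-two vanishing via local duality). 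Your route is an equivalent rearrangement of the same local-cohomology machinery and is arguably more direct, since it stays entirely on the module side and avoids introducing the associated sheaves on $\mathbb{P}^n$.
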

\begin{proof}
If $M$ is a graded $S$-module, let $H^i_m(M)$ denote the $i$th local cohomology module of $M$ with respect to $m=(x_0,\ldots,x_n)$, $\widetilde{M}(i)$ the associated twisted sheaf on $\mathbb{P}^n$, and $H^0(\widetilde{M}(i))$ the vector space of global sections of $\widetilde{M}(i)$.  Define $\Gamma(M)=\bigoplus_i H^0(\widetilde{M}(i))$.  We have the four term exact sequence (see \cite{Eis} Corollary A1.12)
\[
0\rightarrow H^0_m(M) \rightarrow M \rightarrow \Gamma(M) \rightarrow H^1_m(M) \rightarrow 0.
\]
The graded modules $LS^{r,n}(\widehat{\PC})$ and $C^r(\widehat{\PC})$ determine the same sheaf since their localizations at nonmaximal primes agree by Theorem~\ref{latticegens}.  Hence $\Gamma(LS^{r,n}(\widehat{\PC}))=\Gamma(C^r(\widehat{\PC}))$.  Furthermore $C^r(\widehat{\PC})=\Gamma(C^r(\widehat{\PC}))$.  This is a consequence of the fact that $C^r(\widehat{\PC})$ is a second syzygy.  From this it follows that $Ext^i_S(C^r(\widehat{\PC}),S)=0$ for $i=n,n+1$ and hence $H^i_m(C^r(\widehat{\PC}))=0$ for $i=0,1$ by local duality (\cite{Eis} Theorem 10.6).  The four term exact sequence above then yields $C^r(\widehat{\PC})=\Gamma(C^r(\widehat{\PC}))$.

Putting this all together and using the fact that $H^0_m(LS^{r,n}(\widehat{\PC}))=0$ since $LS^{r,n}(\widehat{\PC})$ has no submodule of finite length, we arrive at the short exact sequence
\[
0\rightarrow LS^{r,n}(\widehat{\PC}) \rightarrow C^r(\widehat{\PC}) \rightarrow H^1_m(LS^{r,n}(\widehat{\PC})) \rightarrow 0
\]
So $C$, the cokernel of the inclusion $LS^{r,n}(\widehat{\PC})\hookrightarrow C^r(\widehat{\PC})$, may be identified with $H^1_m(LS^{r,n}(\widehat{\PC}))$.
\end{proof}

We record a couple of facts (see Chapter 4 or Appendix A of \cite{Eis}) about regularity and local cohomology in the following lemma.

\begin{lem}\label{regloc}
Let $M$ be a graded $S$-module, $m\subset S$ the maximal homogeneous ideal.
\begin{enumerate}
\item If $M$ has finite length, then $\mbox{reg }M=\max\limits_j\{j|M_j\neq 0\}$.
\item $H^i_m(M)$ has finite length for every $i\ge 0$.
\item $\mbox{reg}(M)=\max\limits_j \mbox{reg }H^j_m(M)+j$
\end{enumerate}
\end{lem}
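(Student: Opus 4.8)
The plan is to reduce all three items to a single external input: the local cohomology description of Castelnuovo--Mumford regularity (the Eisenbud--Goto theorem; see \cite{Eis}, Chapter 4 and Appendix A1), which says that for a finitely generated graded $S$-module $M$,
\[
\mbox{reg}(M)=\max_j\{\,\mbox{end}(H^j_m(M))+j\,\},
\]
where $\mbox{end}(N):=\max\{d\mid N_d\neq 0\}$ (with $\mbox{end}(0)=-\infty$). This is precisely the statement reconciling the Betti-number definition of $\mbox{reg}(M)$ recorded above with the cohomological one, and I would cite it rather than reprove it; it is the only nontrivial ingredient, and the three parts of the lemma are one-line corollaries of it.

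First I would dispose of (2): for $M$ finitely generated and graded, Grothendieck's finiteness theorem (\cite{Eis}, Appendix A1) gives that each $H^i_m(M)$ is Artinian, so every graded piece $H^i_m(M)_d$ is a finite-dimensional $\R$-vector space and $H^i_m(M)_d=0$ for $d\gg 0$; this is all that the sequel actually uses, and in the one place a local cohomology module is named — the module $C\cong H^1_m(LS^{r,n}(\widehat{\PC}))$ of Proposition~\ref{regest} — it is in fact of finite length, since the corresponding cokernel has codimension $n+1$. Next, for (1): if $M$ has finite length then $M$ is $m$-torsion, so $H^0_m(M)=M$, while $\dim M=0$ forces $H^i_m(M)=0$ for $i>0$ by Grothendieck vanishing; substituting into the displayed formula yields $\mbox{reg}(M)=\mbox{end}(M)=\max\{d\mid M_d\neq 0\}$.

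Finally, for (3): each $H^j_m(M)$ is Artinian of Krull dimension $0$, so the argument for (1) applies verbatim to it and gives $\mbox{reg}(H^j_m(M))=\mbox{end}(H^j_m(M))$; plugging this back into the displayed formula turns it into $\mbox{reg}(M)=\max_j\{\mbox{reg}(H^j_m(M))+j\}$, which is the claim. The only step I expect to require genuine work, were one to insist on self-containedness, is the cited equivalence of the two definitions of regularity; since this is standard and carefully treated in \cite{Eis}, I would black-box it and keep the proof of the lemma to the few lines above, in the logical order (cohomological formula) $\Rightarrow$ (2) $\Rightarrow$ (1) $\Rightarrow$ (3).
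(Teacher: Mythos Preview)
The paper itself offers no proof of this lemma: it is introduced as ``a couple of facts (see Chapter~4 or Appendix~A of \cite{Eis})'' and simply cited thereafter. Your proposal is therefore not competing with any argument in the paper; it is a reasonable unpacking of exactly the reference the paper gives, and your reduction of (1) and (3) to the cohomological regularity formula is the standard route.

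One point deserves a sharper flag. As you implicitly notice in your treatment of (2), the literal claim that $H^i_m(M)$ has \emph{finite length} is false in general (e.g.\ $H^{n+1}_m(S)$ is Artinian but not Noetherian); what is true, and what the sequel actually needs, is that $H^i_m(M)$ is Artinian with $H^i_m(M)_d=0$ for $d\gg 0$. This matters for your derivation of (3): you write that ``the argument for (1) applies verbatim'' to $H^j_m(M)$, but the Eisenbud--Goto identity you are invoking is stated for finitely generated modules, and the Betti-number definition of $\mbox{reg}$ recorded in the paper is not even available for a non-finitely-generated module. The clean fix is to say outright that in (3) one should read $\mbox{reg}\,H^j_m(M)$ as $\mbox{end}\,H^j_m(M)$ (which is all Corollary~\ref{regex2} uses), at which point (3) is literally the cited formula and no further argument is needed. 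With that adjustment your sketch is correct and matches the spirit of the paper's citation.
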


\begin{cor}\label{regex2}
Let $\PC\subset\R^n$ be a pure hereditary polytopal complex.  Set $t=\mbox{reg}(LS^{r,n}(\widehat{\PC}))$.
\begin{enumerate}
\item If $d\ge t$ then $HF(LS^{r,n}(\widehat{\PC}),d)=HF(C^r(\widehat{\PC}),d)=HP(C^r(\widehat{\PC}),d)$.
\item $\mbox{reg}(C^r(\widehat{\PC}))\leq t$ and $HF(C^r(\widehat{\PC}),d)=HP(C^r(\widehat{\PC}),d)$ for $k\ge t-1$.
\end{enumerate}
\end{cor}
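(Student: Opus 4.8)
The plan is to derive both statements from the short exact sequence
\[
0\rightarrow LS^{r,n}(\widehat{\PC}) \rightarrow C^r(\widehat{\PC}) \rightarrow C \rightarrow 0
\]
furnished by Proposition~\ref{regest}, in which $C\cong H^1_m(LS^{r,n}(\widehat{\PC}))$ has finite length, combined with the relation between regularity and local cohomology recorded in Lemma~\ref{regloc} and the Hilbert-function criterion of Theorem~\ref{HFHP}. Throughout, write $M=LS^{r,n}(\widehat{\PC})$, so that $t=\mbox{reg}(M)$ by definition.

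First I would establish the regularity bound $\mbox{reg}(C^r(\widehat{\PC}))\le t$ in part (2). Applying the local cohomology functors $H^\bullet_m(-)$ to the sequence above gives a long exact sequence. Since $C$ has finite length, $H^j_m(C)=0$ for every $j\ge 1$, so for each $j\ge 2$ both $H^{j-1}_m(C)$ and $H^j_m(C)$ vanish and the sequence yields an isomorphism $H^j_m(M)\cong H^j_m(C^r(\widehat{\PC}))$. Recall from the discussion preceding Proposition~\ref{regest}, and from its proof, that $C^r(\widehat{\PC})$ is a second syzygy, so $H^0_m(C^r(\widehat{\PC}))=H^1_m(C^r(\widehat{\PC}))=0$; therefore Lemma~\ref{regloc}(3) applied to $C^r(\widehat{\PC})$ involves only the indices $j\ge 2$, and
\[
\mbox{reg}(C^r(\widehat{\PC}))=\max_{j\ge 2}\left(\mbox{reg}\,H^j_m(C^r(\widehat{\PC}))+j\right)=\max_{j\ge 2}\left(\mbox{reg}\,H^j_m(M)+j\right)\le\max_{j\ge 0}\left(\mbox{reg}\,H^j_m(M)+j\right)=t.
\]
Since $C^r(\widehat{\PC})$ is a second syzygy over $S=\R[x_0,\ldots,x_n]$, its projective dimension is at most $n-1$, so Theorem~\ref{HFHP} gives $HF(C^r(\widehat{\PC}),d)=HP(C^r(\widehat{\PC}),d)$ once $d\ge\mbox{reg}(C^r(\widehat{\PC}))+\mbox{pd}(C^r(\widehat{\PC}))-n$; using $\mbox{pd}(C^r(\widehat{\PC}))\le n-1$ and $\mbox{reg}(C^r(\widehat{\PC}))\le t$, this holds for all $d\ge t-1$, completing part (2).

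For part (1) I would run the long exact sequence the other way. The same computation identifies $H^1_m(M)$ with $C$, so Lemma~\ref{regloc}(3) applied to $M$ gives $t=\mbox{reg}(M)\ge\mbox{reg}\,H^1_m(M)+1=\mbox{reg}\,C+1$. As $C$ has finite length by Lemma~\ref{regloc}(2), Lemma~\ref{regloc}(1) says $\mbox{reg}\,C=\max\{j:C_j\neq 0\}$, hence $C_d=0$ for all $d\ge t$. Feeding this into the short exact sequence gives $HF(M,d)=HF(C^r(\widehat{\PC}),d)$ for $d\ge t$, and combining with part (2), which applies since $t\ge t-1$, yields $HF(LS^{r,n}(\widehat{\PC}),d)=HF(C^r(\widehat{\PC}),d)=HP(C^r(\widehat{\PC}),d)$ for $d\ge t$, as claimed.

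Nearly all of this is bookkeeping with the long exact sequence in local cohomology, so I do not expect a serious obstacle; the two places demanding care are the index range $j\ge 2$ in the isomorphism $H^j_m(M)\cong H^j_m(C^r(\widehat{\PC}))$ — chosen precisely so that the finite-length module $C$ contributes nothing on either side — and the exact form of the threshold in Theorem~\ref{HFHP}, where the term $\mbox{pd}(C^r(\widehat{\PC}))-n$ together with $\mbox{pd}(C^r(\widehat{\PC}))\le n-1$ is what converts $\mbox{reg}(C^r(\widehat{\PC}))\le t$ into the bound $d\ge t-1$ rather than $d\ge t$.
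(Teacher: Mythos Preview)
Your proof is correct and uses the same ingredients as the paper's: the short exact sequence with $C\cong H^1_m(LS^{r,n}(\widehat{\PC}))$, Lemma~\ref{regloc}, Theorem~\ref{HFHP}, and the fact that $C^r(\widehat{\PC})$ is a second syzygy. The only differences are organizational: you prove (2) first and then invoke it to finish (1), whereas the paper handles (1) by applying Theorem~\ref{HFHP} directly to $LS^{r,n}(\widehat{\PC})$ (using $\mbox{pd}\le n$ and $HP(LS^{r,n}(\widehat{\PC}),d)=HP(C^r(\widehat{\PC}),d)$); and for the regularity bound in (2) you unpack the local-cohomology long exact sequence explicitly, while the paper simply quotes the standard inequality $\mbox{reg}(B)\le\max\{\mbox{reg}(A),\mbox{reg}(C)\}$ for a short exact sequence from \cite{CommAlg}.
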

\begin{proof}
From Theorem~\ref{latticegens} we have the short exact sequence
\[
0\rightarrow LS^{r,n}(\widehat{\PC}) \rightarrow C^r(\widehat{\PC}) \rightarrow H^1_m(LS^{r,n}(\widehat{\PC})) \rightarrow 0,
\]
$(1)$ If $d\ge t$ then $H^1_m(LS^{r,n}(\widehat{\PC}))_d=0$ by Lemma~\ref{regloc} and $HF(LS^{r,n}(\widehat{\PC}),d)=HF(C^r(\widehat{\PC}),d)$.  We have $\mbox{pd}(LS^{r,n}(\widehat{\PC}))\le n$ since $LS^{r,n}(\widehat{\PC})$ has no submodule of finite length, so Theorem~\ref{HFHP} yields that $HF(LS^{r,n}(\widehat{\PC}),d)=HP(LS^{r,n}(\widehat{\PC}),d)$ for $d\ge t+n-n =t$.  The result follows since $HP(LS^{r,n}(\widehat{\PC}),d)=HP(C^r(\widehat{\PC}),d)$ $(2)$ If
\[
0\rightarrow A \rightarrow B \rightarrow C \rightarrow 0
\]
is a short exact sequence of graded modules, then $\mbox{reg}(B)\leq \max\{\mbox{reg}(A),\mbox{reg}(C)\}$ (see $\S 20.5$ of \cite{CommAlg}).  This fact coupled with Proposition~\ref{regest} yields $\mbox{reg}(C^r(\widehat{\PC}))\leq t$.  The second statement of $(2)$ follows from Theorem~\ref{HFHP} and the fact that $C^r(\widehat{\PC})$ is a second syzygy.
\end{proof}

In \cite{HongDong} and \cite{SuperSpline}, star-supported bases are constructed for $C^r_d(\Delta)$, $\Delta\subset\R^2$ any triangulation of a disk.  According to Proposition~\ref{regest}, this implies $H^1_m(LS^{r,2}(\widehat{\Delta}))_{3r+2}=0$, which is compatible with (but \textit{not necessarily} equivalent to) the statement $\mbox{reg } LS^{r,2}(\widehat{\Delta})\le 3r+2$.  The following conjecture is a natural generalization of this observation.

\begin{conj}\label{c1}
Let $\PC\subset\R^2$ be a hereditary polytopal complex with $F$ being the maximum length of the boundary of a polytope of $\PC$.  Then
\[
\mbox{reg}(LS^{r,2}(\widehat{\PC}))\leq F(r+1)-1.
\]
\end{conj}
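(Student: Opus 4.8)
The plan is to work throughout with the homogenized module, using Lemma~\ref{latticehom} and Corollary~\ref{regex2} to reduce to bounding $\mbox{reg}(LS^{r,2}(\widehat{\PC}))$. By Proposition~\ref{Gamma} this module is the finite sum $\sum_{\QC\in\Gamma^{2,\max}_\PC}C^r_\QC(\widehat{\PC})$. A maximal element $\QC$ of $\Gamma^{2}_\PC$ is a connected union of facets of $\PC$ all of whose interior edges have affine spans passing through a single point $p$ of the projective plane --- for instance a star-like complex at an interior vertex of $\PC$, an annular complex around a point at infinity as in Figure~\ref{xi}, or the more general configurations appearing in Figure~\ref{dcx}. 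So the argument splits into two halves: bound $\mbox{reg}(C^r_\QC(\widehat{\PC}))$ for each of these building blocks, and then control the regularity of the sum in terms of the blocks and their overlaps.

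For a single block $\QC$ I would exploit the sandwich $L^{r+1}_{\partial\QC}\cdot C^r(\widehat{\QC})\subseteq C^r_\QC(\widehat{\PC})\subseteq C^r(\widehat{\QC})$ (the latter inclusion realizing $C^r_\QC(\widehat{\PC})$ as the splines on $\widehat{\QC}$ vanishing to order $r+1$ on $\partial\widehat{\QC}\setminus\partial\widehat{\PC}$) together with two inputs. First, the Billera--Schenck--Stillman chain complex \cite{Homology,LCoho} expresses $C^r(\widehat{\QC})$ through the edge ideals $\langle\ell_\tau^{r+1}\rangle$ and the vertex ideals $\sum_\tau\langle\ell_\tau^{r+1}\rangle$; since every interior edge of $\QC$ passes through $p$, the only vertex ideal that can be large is the one at $p$, governed by the $\le F$ edge directions entering each $\le F$-gon of $\QC$. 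Second, I would construct, by a Bernstein--B\'ezier computation on a triangulation of the $\le F$-gons of $\QC$, an explicit generating set for $C^r_\QC(\widehat{\PC})$ in degree $\le F(r+1)-1$, generalizing the vertex-spline constructions of \cite{HongDong} and \cite{SuperSpline}; the exponent $F(r+1)-1$ should emerge as the worst-case amount of degree needed, on a single $\le F$-gon, to accommodate order-$r$ smoothness across a fan of $\le F$ edge directions at $p$ together with order-$(r+1)$ vanishing on the remaining boundary. For $F=3$ this recovers the classical value $3r+2$.

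To pass from the blocks to $LS^{r,2}(\widehat{\PC})$ I would induct on $\Gamma$-rank, writing $LS^{r,2}(\widehat{\PC})=LS^{r,1}(\widehat{\PC})+\sum_{f_\Gamma(\QC)=2}C^r_\QC(\widehat{\PC})$ and analyzing the pairwise and higher overlaps, which are supported on complexes of strictly smaller $\Gamma$-rank. The goal is a Mayer--Vietoris-type complex on the poset $\Gamma^{2,\max}_\PC$ whose terms are assembled from the modules $C^r_{\QC_{i_0}\cap\cdots\cap\QC_{i_p}}(\widehat{\PC})$ and whose homology is $LS^{r,2}(\widehat{\PC})$, after which the bound follows from the standard estimate $\mbox{reg}(A+B)\le\max\{\mbox{reg}\,A,\mbox{reg}\,B,\mbox{reg}(A\cap B)-1\}$ applied repeatedly, all regularities involved being $\le F(r+1)-1$. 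A variant of the plan aims directly at the two statements that Proposition~\ref{regest} and Corollary~\ref{regex2} show together imply the conjecture: that $\mbox{reg}(C^r(\widehat{\PC}))\le F(r+1)-1$ (which would already give the expected bound for the McDonald--Schenck formula \cite{TSchenck08}) and that the finite-length cokernel $C$ of $LS^{r,2}(\widehat{\PC})\hookrightarrow C^r(\widehat{\PC})$ vanishes in degrees $\ge F(r+1)-1$.

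The main obstacle is the failure of naive Mayer--Vietoris among the support submodules: $C^r_{\QC_1\cup\QC_2}(\widehat{\PC})$ is in general strictly larger than $C^r_{\QC_1}(\widehat{\PC})+C^r_{\QC_2}(\widehat{\PC})$, because a spline supported on $\QC_1\cup\QC_2$ need not split as a sum of splines supported on the two pieces --- its restriction across an interior edge $\tau$ where the pieces meet need not be divisible by $\ell_\tau^{r+1}$. One therefore has to identify the correction terms, show that they are again lattice-supported data living in degree strictly below $F(r+1)-1$, and feed them back into the induction on $\Gamma$-rank; quantifying this interaction --- in effect, ruling out any global obstruction beyond the local contributions at the points $p$ --- is exactly the point at which the statement remains a conjecture, although the simplicial constructions of \cite{HongDong,SuperSpline} and the tightness family for Conjecture~\ref{c2} in this section make $F(r+1)-1$ the natural candidate.
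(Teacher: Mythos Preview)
The statement you are addressing is Conjecture~\ref{c1}, not a theorem: the paper offers no proof, only motivation (the simplicial star-supported bases of \cite{HongDong,SuperSpline} suggesting $3r+2$ when $F=3$, and the Alfeld--Schumaker examples \cite{NonEx} showing this would be sharp). So there is no ``paper's own proof'' to compare against; your task here is not to reproduce an argument but to resolve an open problem.

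Your outline is a reasonable sketch of how one might attack the conjecture, and you are honest about where it breaks down. The decomposition via $\Gamma^{2,\max}_\PC$ and the per-block analysis of $C^r_\QC(\widehat{\PC})$ are natural first moves, and the sandwich $L^{r+1}_{\partial\QC}\cdot C^r(\widehat{\QC})\subseteq C^r_\QC(\widehat{\PC})\subseteq C^r(\widehat{\QC})$ is indeed the right structural input. But you correctly identify the genuine obstacle: the failure of Mayer--Vietoris for the support submodules means that bounding the regularity of each $C^r_\QC(\widehat{\PC})$ does not by itself bound $\mbox{reg}(LS^{r,2}(\widehat{\PC}))$, and the estimate $\mbox{reg}(A+B)\le\max\{\mbox{reg}\,A,\mbox{reg}\,B,\mbox{reg}(A\cap B)-1\}$ requires control over intersections $C^r_{\QC_1}(\widehat{\PC})\cap C^r_{\QC_2}(\widehat{\PC})$, which are not themselves of the form $C^r_{\QC_1\cap\QC_2}(\widehat{\PC})$. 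Your variant approach via Proposition~\ref{regest} trades this for showing $H^1_m(LS^{r,2}(\widehat{\PC}))_d=0$ for $d\ge F(r+1)-1$, which is equivalent but no easier.

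In short: what you have written is a plausible research plan, not a proof, and the paper agrees with you that the statement is open.
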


It is important to note that Alfeld and Schumaker construct simplicial complexes $\Delta\subset\R^2$ so that $C^r_{3r+1}(\Delta)$ does \emph{not} have a star-supported basis.  Again via Proposition~\ref{regest}, this implies that (for these particular examples) $H^1_m(LS^{r,2}(\widehat{\Delta}))_{3r+1}\neq 0$, so $\mbox{reg } LS^{r,2}(\widehat{\Delta})\ge 3r+2$.  So if Conjecture~\ref{c1} is true, it is an optimal bound, at least in the simplicial case.  Conjecture~\ref{c1} would also imply, via Corollary~\ref{regex2}, that $\dim_\R C^r(\widehat{\PC})_k$ agrees with the McDonald-Schenck formula for $k\ge F(r+1)-2$.  We also propose the following generalization of Schenck's $2r+1$ conjecture.

\begin{conj}\label{c2}
Let $\PC\subset\R^2$ be a hereditary polytopal complex with $F$ being the maximum length of the boundary of a polytope of $\PC$.  Then
\[
\mbox{reg}(C^r(\widehat{\PC}))\leq (F-1)(r+1).
\]
\end{conj}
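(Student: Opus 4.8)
Conjecture~\ref{c2} improves on the bound for $\mbox{reg}(C^r(\widehat{\PC}))$ that follows from Conjecture~\ref{c1} and Corollary~\ref{regex2} by an additive $r$, so it cannot be obtained that way: the improvement must exploit that $C^r(\widehat{\PC})$ is a second syzygy, whereas $LS^{r,2}(\widehat{\PC})$ in general carries a nonzero finite-length cokernel. The plan is therefore to work with local cohomology over $S=\R[x_0,x_1,x_2]$. Since $C^r(\widehat{\PC})$ is a second syzygy, $H^0_m(C^r(\widehat{\PC}))=H^1_m(C^r(\widehat{\PC}))=0$, so Lemma~\ref{regloc}(3) gives $\mbox{reg}(C^r(\widehat{\PC}))=\max\{\mbox{reg}\,H^2_m(C^r(\widehat{\PC}))+2,\ \mbox{reg}\,H^3_m(C^r(\widehat{\PC}))+3\}$; and the long exact sequence in local cohomology attached to $0\to LS^{r,2}(\widehat{\PC})\to C^r(\widehat{\PC})\to C\to 0$ (Theorem~\ref{latticegens}), with $C$ of finite length (Proposition~\ref{regest}), shows $H^i_m(C^r(\widehat{\PC}))\cong H^i_m(LS^{r,2}(\widehat{\PC}))$ for $i\ge 2$. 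Thus Conjecture~\ref{c2} is equivalent to the two estimates
\[
\mbox{reg}\,H^2_m(LS^{r,2}(\widehat{\PC}))\le (F-1)(r+1)-2\qquad\text{and}\qquad \mbox{reg}\,H^3_m(LS^{r,2}(\widehat{\PC}))\le (F-1)(r+1)-3,
\]
the point being that these may hold even when $\mbox{reg}\,H^1_m(LS^{r,2}(\widehat{\PC}))=\mbox{reg}\,C$ is as large as $F(r+1)-2$, which is what yields Conjecture~\ref{c1} via Corollary~\ref{regex2} but not Conjecture~\ref{c2}.

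To reach $H^2_m$ and $H^3_m$, I would localize along the decomposition $LS^{r,2}(\widehat{\PC})=\sum_{\QC\in\Gamma^{2,\max}_\PC}C^r_\QC(\widehat{\PC})$ of Proposition~\ref{Gamma}. Enumerating the maximal planar lattice complexes $\QC_1,\dots,\QC_N$ and putting $M_i=C^r_{\QC_i}(\widehat{\PC})$, the iterated short exact sequences
\[
0\to (M_1+\cdots+M_{i-1})\cap M_i\to (M_1+\cdots+M_{i-1})\oplus M_i\to M_1+\cdots+M_i\to 0,
\]
together with the estimate $\mbox{reg}(B)\le\max\{\mbox{reg}(A),\mbox{reg}(C)\}$ for $0\to A\to B\to C\to 0$, reduce $\mbox{reg}\,H^j_m(LS^{r,2}(\widehat{\PC}))$, $j=2,3$, to bounding the regularities of the $H^j_m(M_i)$ and of the $H^j_m$ of the successive intersection modules. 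Distinct maximal lattice complexes meet only along interior edges of $\PC$ (never along facets, by maximality), so the intersection modules are supported on lower-dimensional, combinatorially simpler configurations; the essential content is the bound for the $M_i=C^r_{\QC_i}(\widehat{\PC})$.

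Here $C^r_\QC(\widehat{\PC})$ is exactly the submodule of $C^r(\widehat{\QC})$ whose facet polynomials vanish to order $r+1$ along the edges of $\partial\QC$ interior to $\PC$. Since a facet of $\QC$ that is an $F$-gon carries at least two edges interior to $\QC$, it carries at most $F-2$ such boundary edges; as these have distinct affine spans, they force that facet's polynomial to be divisible by a product of at most $F-2$ forms $l_\tau^{\,r+1}$, of total degree $\le(F-2)(r+1)$. This is the mechanism by which the polygon size $F$ enters the bound. It remains to control the ``internal'' part of the module. A maximal planar lattice complex $\QC$ is either the star of an interior vertex or an annular complex whose interior edges have affine spans through a common point $p$; after coning and a change of coordinates sending the line $\overline{0p}$ to a coordinate axis, the forms of the interior codimension-one faces of $\widehat{\QC}$ involve only two of the three variables of $S$, and $C^r(\widehat{\QC})$ is governed by the homology of the chain complex of \cite{LCoho,TSchenck08} built from the ideals $\langle l_\tau^{\,r+1}\rangle$ along the dual graph $G(\QC)$. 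For a vertex star this complex has an explicit resolution and the module is close to free, so its contribution to $H^2_m$ and $H^3_m$ is small; the hard case is the annular one, where $G(\QC)$ contains a cycle and one must show that the ``wrap-around'' part of $H^j_m(C^r_\QC(\widehat{\PC}))$ adds at most one further factor of $r+1$ to the boundary contribution $(F-2)(r+1)$, landing at $\le (F-1)(r+1)-j$.

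Proving this last estimate sharply is the main obstacle. Concretely, one must bound the degrees in a presentation of the cyclic syzygy module $\{(f_1,\dots,f_k)\mid f_i-f_{i+1}\in\langle l_i^{\,r+1}\rangle,\ f_{k+1}=f_1\}$ and in its local cohomology, using that at most $F-1$ consecutive forms $l_i$ around the cycle can be edge-directions of a single polygon of $\QC$; the resulting bound has to be exactly $(F-1)(r+1)$, with no slack, since the family of examples in the paper realizes equality. A secondary difficulty is controlling the intersection modules uniformly, particularly where several lattice complexes share a single edge as in Example~\ref{SchCube}, and verifying that the degree shift introduced by the Mayer--Vietoris sequences does not push their regularity past $(F-1)(r+1)-j$. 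Granting the annular estimate and this overlap control, reassembling the pieces yields the two displayed estimates and hence $\mbox{reg}(C^r(\widehat{\PC}))\le (F-1)(r+1)$.
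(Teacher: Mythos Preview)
The statement you are attempting to prove is \emph{Conjecture}~\ref{c2}: the paper does not prove it, and explicitly remarks that even the simplicial special case (Schenck's $2r+1$ conjecture) is open. So there is no ``paper's own proof'' to compare your proposal against; what the paper does supply is the family $T_n$ showing the bound would be sharp, and the local-cohomology framework (Proposition~\ref{regest}, Lemma~\ref{regloc}, Corollary~\ref{regex2}) that you are invoking.

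Your reduction is correct as far as it goes: $H^0_m$ and $H^1_m$ of $C^r(\widehat{\PC})$ vanish because it is a second syzygy, and the finite-length cokernel $C$ makes $H^i_m(C^r(\widehat{\PC}))\cong H^i_m(LS^{r,2}(\widehat{\PC}))$ for $i\ge 2$, so Conjecture~\ref{c2} is equivalent to the two displayed inequalities on $H^2_m$ and $H^3_m$ of $LS^{r,2}(\widehat{\PC})$. But from that point on the argument is a sketch with the hard steps left open, as you yourself say (``Proving this last estimate sharply is the main obstacle'', ``Granting the annular estimate and this overlap control''). Those granted steps are precisely the content of the conjecture; nothing in the paper, and nothing in your outline, supplies them.

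There are also concrete errors in the scaffolding. First, your claim that distinct maximal lattice complexes ``meet only along interior edges of $\PC$ (never along facets, by maximality)'' is false: in Example~\ref{Gam} the maximal complexes in $\Gamma^{2,\max}_{\QC'}$ include, e.g., $ABE$ and $BCE$, which share the facets $B$ and $E$. Maximality in $\Gamma^k_\PC$ means no containment, not disjointness of facet sets, so your Mayer--Vietoris intersections are not supported on lower-dimensional configurations and the ``combinatorially simpler'' reduction does not go through. Second, the assertion that a facet of a maximal lattice complex $\QC$ carries ``at least two edges interior to $\QC$'' is unjustified; a facet of $\QC$ need only have one interior codimension-one face in $\QC$ (any leaf of the dual graph $G(\QC)$), so the count that produces the $(F-2)(r+1)$ boundary contribution is not established. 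Finally, the dichotomy ``star of a vertex or annular'' for maximal rank-$2$ lattice complexes is too coarse: the complexes $\QC_\alpha$, $\QC_\beta$ of Figure~\ref{projective} are neither vertex stars nor annuli.

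In short: the local-cohomology reformulation is sound and in the spirit of \cite{CohVan}, but the proposal does not constitute a proof, and two of its structural claims about lattice complexes are contradicted by the paper's own examples.
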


This conjecture would imply, via Corollary~\ref{regex2}, that $\dim_\R C^r(\widehat{\PC})_k$ agrees with the McDonald-Schenck formula for $k\ge (F-1)(r+1)-1$.  The simplicial case of this, Schenck's $2r+1$ conjecture, is still open.  See \cite{CohVan} for an approach using the cohomology of sheaves on $\mathbb{P}^2$ and \cite{Stefan} for an example showing that this bound is tight.  We give a family of examples which show that the regularity bound of Conjecture~\ref{c2} cannot be lowered further.

\begin{thm}
There exists a polytopal complex $\PC\subset\R^2$ having one triangular face, $n-1$ quadrilateral faces, and two $(n+1)$-gons, such
that $C^r(\widehat{\PC})$ has a minimal generator of degree $n(r+1)$ supported on a single facet.  Hence $\mbox{reg}(C^r(\widehat{\PC}))\ge n(r+1)$ and Conjecture~\ref{c2} cannot be made tighter.
\end{thm}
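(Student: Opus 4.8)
The plan is to exhibit an explicit complex $\PC=\PC_n\subset\R^2$ with the stated face vector and to show that a single-facet spline on one of its two $(n+1)$-gons is a minimal generator of $C^r(\widehat{\PC})$ in degree $n(r+1)$; since $\mbox{reg}(M)\geq\max\{d:(M/\mathfrak mM)_d\neq 0\}$ for any finitely generated graded $S$-module $M$, and since (for $n\geq 3$) the $(n+1)$-gons have the longest boundary so that $F=n+1$, this forces $\mbox{reg}(C^r(\widehat{\PC}))\geq n(r+1)=(F-1)(r+1)$, the bound of Conjecture~\ref{c2}. For the complex, take a ``lens'': two $(n+1)$-gons $\sigma$ and $\sigma'$ separated by a chain of facets $M_1,\dots,M_n$, where $M_1$ is a triangle and $M_2,\dots,M_n$ are quadrilaterals, $M_j$ meeting $\sigma$ along an interior edge $a_j$ and $\sigma'$ along an interior edge $b_j$ and consecutive $M_j,M_{j+1}$ meeting along an edge, with $\sigma$ and $\sigma'$ each having one boundary edge; so $\sigma$ has interior edges $a_1,\dots,a_n$, and $\sigma'$ has interior edges $b_1,\dots,b_n$. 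By a routine choice of coordinates one arranges the $n$ points $P_j:=\mbox{aff}(a_j)\cap\mbox{aff}(b_j)$ to be distinct and collinear, on a line $m$ containing no edge line of $\PC_n$: the only combinatorially forced coincidence is $P_1$ = the vertex shared by $\sigma,\sigma',M_1$, while for $j\geq 2$ the lines $\mbox{aff}(a_j),\mbox{aff}(b_j)$ are opposite edge lines of the quadrilateral $M_j$ and can be slid into position keeping $\sigma,\sigma'$ convex. One checks from the picture that $\PC_n$ is pure, hereditary, with $|\PC_n|$ a disk, the claimed face vector, and $F=n+1$.

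Let $S=\R[x_0,x_1,x_2]$, let $l_\tau\in S$ be the homogeneous linear form of the face $\widehat\tau$ of $\widehat{\PC}$, and set $h:=\prod_{j=1}^n l_{b_j}^{\,r+1}$. Let $\Phi\in C^r(\widehat{\PC})$ be the spline equal to $h$ on $\sigma'$ and to $0$ on every other facet; it is a genuine spline because $h\in(l_{b_j}^{\,r+1})$ for each $j$, and it lies in degree $n(r+1)$. Now consider the $R$-linear map $\psi:C^r(\widehat{\PC})\to S$ given by $\psi(F)=F_{\sigma'}-F_\sigma$. For each $j$, $F_{\sigma'}-F_\sigma=(F_{\sigma'}-F_{M_j})-(F_\sigma-F_{M_j})\in(l_{b_j}^{\,r+1})+(l_{a_j}^{\,r+1})$, so $\mbox{im}(\psi)\subseteq K:=\bigcap_{j=1}^n\,(l_{a_j}^{\,r+1},l_{b_j}^{\,r+1})$, the ideal of $n$ complete-intersection fat points, one at each $P_j$; and $\psi(\Phi)=h$, so $h\in\mbox{im}(\psi)$.

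The heart of the argument is restriction to the line $m$. Let $\hat m\in S$ be a linear form defining $m$, write $g\mapsto\bar g$ for reduction modulo $\hat m$, and note $S/(\hat m)$ is the homogeneous coordinate ring of $m\cong\mathbb P^1$. Because $P_j\in m$, the forms $\overline{l_{a_j}}$ and $\overline{l_{b_j}}$ both vanish at $P_j$ and are therefore proportional on $\mathbb P^1$, so the image of $(l_{a_j}^{\,r+1},l_{b_j}^{\,r+1})$ in $S/(\hat m)$ equals $\mathfrak m_{P_j}^{\,r+1}$, where $\mathfrak m_{P_j}\subset S/(\hat m)$ is the graded ideal of $P_j$. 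Since the $P_j$ are distinct, $\bigcap_j\mathfrak m_{P_j}^{\,r+1}=\prod_j\mathfrak m_{P_j}^{\,r+1}$ is principal, generated by $\bar h=\prod_j\overline{l_{b_j}}^{\,r+1}$, which has degree exactly $n(r+1)$; as $K$ restricts into this ideal while $h\in K$ restricts onto $\bar h$, the image of $K$ — hence of $\mbox{im}(\psi)$ — in $S/(\hat m)$ is precisely $(\bar h)$. In particular, every element of $\mbox{im}(\psi)$ of degree $<n(r+1)$ lies in $(\hat m)$. Consequently, if $\Phi=\sum_i x_iG^{(i)}$ with $G^{(i)}\in C^r(\widehat{\PC})_{n(r+1)-1}$, then $h=\psi(\Phi)=\sum_i x_i\,\psi(G^{(i)})$ with each $\psi(G^{(i)})\in\mbox{im}(\psi)$ of degree $n(r+1)-1$, hence in $(\hat m)$; reducing modulo $\hat m$ gives $\bar h=0$, contradicting $\bar h=\prod_j\overline{l_{b_j}}^{\,r+1}\neq 0$. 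Thus $\Phi\notin\mathfrak m\,C^r(\widehat{\PC})$, so $\Phi$ is a minimal generator of $C^r(\widehat{\PC})$ of degree $n(r+1)$ supported on the single facet $\sigma'$; therefore $\mbox{reg}(C^r(\widehat{\PC}))\geq n(r+1)=(F-1)(r+1)$, and the bound of Conjecture~\ref{c2} cannot be lowered.

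The step I expect to require the most care is the construction itself: producing an honest convex planar realization in which the combinatorics (one triangle, $n-1$ quadrilaterals, two $(n+1)$-gons), the disk topology, the value $F=n+1$, and the collinearity of $P_1,\dots,P_n$ all hold at once — this is cleanest to do with explicit coordinates for a nearly symmetric lens about $m$. Everything after that is elementary; in particular the argument never needs to compute $\mbox{im}(\psi)$ exactly (it may be a proper subideal of $K$), only that it lies in $K$, contains $h$, and restricts onto $(\bar h)$.
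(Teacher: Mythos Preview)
Your proof is correct and follows essentially the same strategy as the paper: build a lens-shaped complex symmetric about a line $m$ on which the paired edge lines $\mbox{aff}(a_j),\mbox{aff}(b_j)$ meet, and then restrict to $m$ so that $\bigcap_j(l_{a_j}^{r+1},l_{b_j}^{r+1})$ collapses to a principal ideal generated in degree $n(r+1)$, forcing the single-facet spline to be a minimal generator. The paper supplies the explicit coordinates you flag as the delicate step (its $T_n$ is symmetric about $y=0$, with the $j$th pair of edges meeting at $((j-1)/2,0)$), and phrases the map as $\phi_B$ restricted to $NT^r_n=\ker\phi_A$ rather than your difference $\psi(F)=F_{\sigma'}-F_\sigma$; your formulation is marginally cleaner since $\psi$ is $S$-linear on all of $C^r(\widehat{\PC})$, but the argument is the same.
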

\begin{proof}
Let $T_n\subset\R^2$ be the polyhedral complex with
\begin{itemize}
\item $2n+1$ vertices as follows: $v_0=w_0=(0,0)$, $v_i=(i,i(i+1)/2)$ for $i=1,\ldots,n$, and $w_j=(j,j(j+1)/2)$ for $j=1,\ldots,n$
\item $1$ triangular face $P_0$ with vertices $(0,0),v_1,w_1$
\item $n-1$ quadrilateral faces $P_i$ with vertices $v_i,w_i,v_{i+1},w_{i+1}$ for $i=1,\ldots,n-1$
\item Two $(n+1)$-gons $A$ and $B$ with vertices $(0,0),v_1,\ldots,v_n$ and $(0,0),w_1,\ldots,w_n$, respectively. (See Figure~\ref{T3})
\end{itemize}

\begin{figure}[htb]
\centering
\includegraphics[scale=.6]{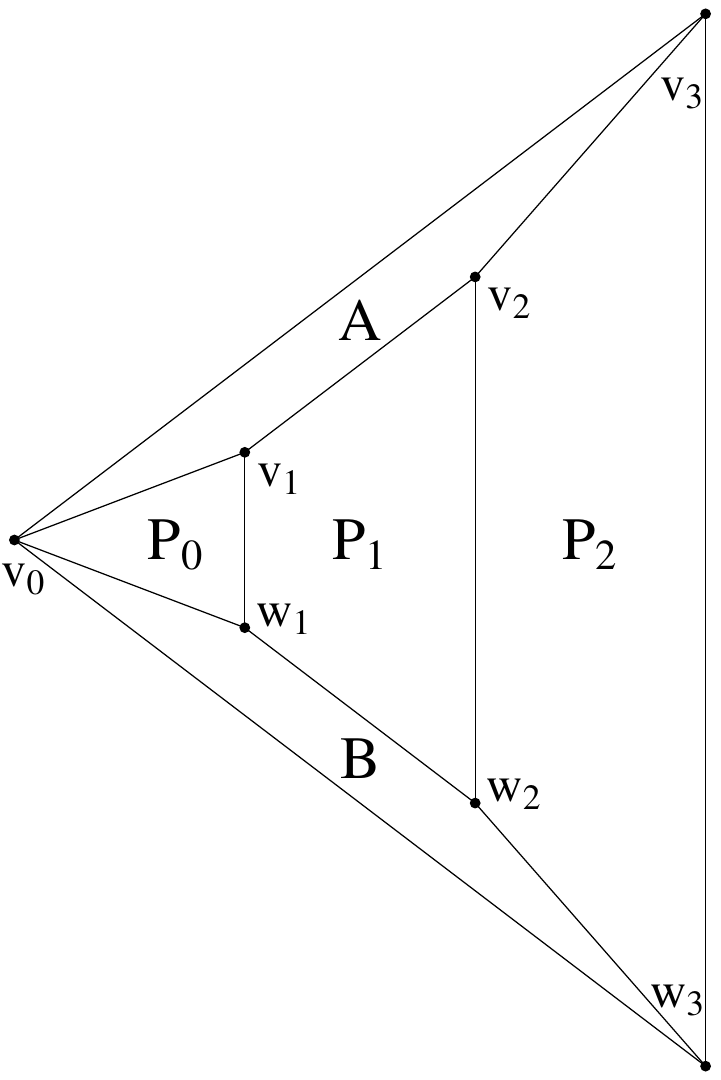}
\caption{$T_3$}\label{T3}
\end{figure}

Set $S=\R[x,y,z]$,$R=\R[x,z]$.  $u_k=(k+1)x-y-\binom{k+1}{2}z, h_k=x-kz,l_k=(k+1)x+y-\binom{k+1}{2}z$ are the homogenized forms defining the edges between $v_k$ and $v_{k+1}$, $v_k$ and $w_k$, $w_k$ and $w_{k+1}$, respectively.  Let $\phi_A:C^r(\widehat{T}_n)\rightarrow S$ denote the map obtained by restricting splines to the facet $A$ and set $NT^r_n=\mbox{ker }\phi_A$.

Suppose we have $G\in NT^r_n$.  Then $u^{r+1}_i|G_{\widehat{P}_i}$ and $l^{r+1}_i|G_{\widehat{B}}-G_{\widehat{P}_i}$ for $i=0,\ldots,n-1$.  So $G_{\widehat{B}}\in(u^{r+1}_i,l^{r+1}_i)$ for $i=0,\ldots,n-1$ and $G_{\widehat{B}}\in \cap_{i=0}^{n-1} (u^{r+1}_i,l^{r+1}_i)=J_r$.  Define $p_y:S\rightarrow R$ by $p_y(f(x,y,z))=f(x,0,z)$ for $f(x,y,z)\in S$.  $p_y(u_k)=p_y(l_k)=(k+1)x-\binom{k+1}{2}$, so $p_y(J_r)=I_r\subset R$ is the principal ideal $\cap_{i=0}^{n-1} (x-(k/2)z)^{r+1}=(\prod_{k=0}^{n-1} (x-(k/2)z))^{r+1}$.

So we have a graded homomorphism of $S$-modules $p_y\circ\phi_B:NT^{r+1}_n \rightarrow I_r$, where $\phi_B(G)=G_{\widehat{B}}$.  Define $F(B)\in C^r_{\widehat{B}}(\widehat{T}_n)$ by $F(B)_{\widehat{\sigma}}=0$ for every facet $\sigma$ of $T_n$ other than $B$ and $F(B)_{\widehat{B}}=L_{\partial \widehat{B}}$.  $(p_y\circ\phi_B)(F(B))=(\prod_{k=0}^{n-1} (k+1)(x-(k/2)z))^{r+1}$, which is a minimal generator of the ideal $I_r$.  It follows that $F(B)$ is a minimal generator of $C^r(\widehat{T_n})$.  Since $F(B)$ has degree $n(r+1)$, we are done.
\end{proof}

The following corollary indicates how different the polytopal case is from the simplicial, in which case $C^r(\widehat{\Delta})$ is generated as a module over $S=\R[x,y,z]$ in degree at most $3r+2$.

\begin{cor}
If $\PC$ is planar polyhedral complex, then $C^r(\widehat{\PC})$ may be generated as an $S$-module in arbitrarily high degree.
\end{cor}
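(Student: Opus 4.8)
The plan is to deduce this corollary directly from the preceding Theorem, so essentially no new work is required. Fix the smoothness parameter $r$. For each positive integer $n$ the Theorem produces a planar polytopal complex $T_n\subset\R^2$ (one triangular face, $n-1$ quadrilateral faces, and two $(n+1)$-gons) together with an explicit spline $F(B)\in C^r_{\widehat{B}}(\widehat{T_n})$ of degree $n(r+1)$ that is a minimal generator of the graded $S$-module $C^r(\widehat{T_n})$, where $S=\R[x,y,z]$. The first step is just to recall what this buys us: for a finitely generated graded module over the polynomial ring $S$, the set of degrees occurring in a minimal homogeneous generating set is independent of the chosen generating set (by graded Nakayama, these are the degrees $d$ with $C^r(\widehat{T_n})_d/(m\cdot C^r(\widehat{T_n}))_d\neq 0$). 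Hence the existence of a minimal generator of degree $n(r+1)$ means precisely that $C^r(\widehat{T_n})$ cannot be generated in degrees strictly less than $n(r+1)$.

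The second step is the trivial quantifier manipulation: given an arbitrary target $N$, choose $n=\lceil N/(r+1)\rceil$, so that $n(r+1)\geq N$; then $C^r(\widehat{T_n})$ requires a generator of degree at least $N$. Since $N$ was arbitrary, this exhibits a family of planar polyhedral complexes whose spline modules need generators of unbounded degree, which is the assertion of the corollary.

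There is no real obstacle here, since all the substance is contained in the Theorem (whose proof constructs the $T_n$ explicitly and checks, via the composite $p_y\circ\phi_B\colon NT^{r+1}_n\to I_r$, that $F(B)$ maps to a minimal generator of the principal ideal $I_r$). The one point worth a sentence of verification is that each $T_n$ really is a legitimate planar polyhedral complex — convex, intersecting face-to-face — for every $n$; this is immediate from the fact that the vertices $v_i=w_i=(i,i(i+1)/2)$ lie on a strictly convex arc, so the two $(n+1)$-gons $A,B$ and the intermediate quadrilaterals glue along shared edges without overlap. With that observed, the corollary follows at once.
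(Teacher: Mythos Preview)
Your proposal is correct and takes essentially the same approach as the paper: the corollary is stated without separate proof, being an immediate consequence of the preceding Theorem exhibiting minimal generators of degree $n(r+1)$ in $C^r(\widehat{T_n})$. One small slip in your final paragraph: you write $v_i=w_i=(i,i(i+1)/2)$, but $v_i$ and $w_i$ must be distinct (reflections across the $x$-axis, as the defining forms $u_k$ and $l_k$ reveal); this is a typo in the paper's vertex list, and in any case the well-definedness of $T_n$ is part of the Theorem you are invoking, not something the corollary needs to re-verify.
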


\textbf{Acknowledgements:}  I would like to thank my advisor, Hal Schenck, for suggesting this direction of research, and Alexandra Seceleanu for careful reading and great suggestions.

\end{document}